\theoremstyle{plain}   %---
\def\MGP{\mathbb M}
\def\DGP{d_{\text{GP}}}
\newtheorem{lemma}{Lemma}[section]
\newtheorem{theorem}[lemma]{Theorem}
\newtheorem{corollary}[lemma]{Corollary}
\newtheorem{proposition}[lemma]{Proposition}
\newtheorem{definition}[lemma]{Definition}
\theoremstyle{remark}
\newtheorem{remark}[lemma]{Remark}
\newcommand{\Circle}{{\sf Ci}}
\def\RR{\mathbb{R}}
\def\BigO{\mathcal{O}}
\def\eps{\varepsilon}
\def\match{\mathfrak{m}}
\def\dbox{\delta_{\Box}}
\def\SpaceGraphon{\widetilde{\mathcal W_0}}
\DeclareMathOperator{\density}{Dens}
\DeclareMathOperator{\Sample}{Sample}
\DeclareMathOperator{\Leb}{Leb}
\DeclareMathOperator{\disc}{disc}
\DeclareMathOperator{\chords}{chords}
\DeclareMathOperator{\cc}{cc}
\newcommand{\dTV}{\ensuremath{d_{\mathrm TV}}}
\newcommand{\dens}[2]{{\density}(#1,#2)}
\newcommand{\set}[1]{\left\{#1\right\}}
\newcommand{\One}{\bm{1}}
\newcommand{\Wperm}{W^{(\mathrm{perm})}}
\newcommand{\Wcircle}{W^{(\mathrm{circle})}}
\title[Large intersection graphs]{Dense and nondense limits \\for uniform random intersection graphs}
\author[F. Bassino]{Frédérique Bassino}
       \address[FB]{Université Sorbonne Paris Nord, LIPN, CNRS UMR 7030, F-93430 Villetaneuse, France}
       \email{bassino@lipn.univ-paris13.fr}
 \author[M. Bouvel]{Mathilde Bouvel}
   \address[MB]{Université de Lorraine, CNRS, Inria, LORIA, F-54000 Nancy, France}
   \email{mathilde.bouvel@loria.fr}
 \author[V. Féray]{Valentin Féray}
  \address[VF]{Université de Lorraine, CNRS, IECL, F-54000 Nancy, France}
  \email{valentin.feray@univ-lorraine.fr}
 \author[L. Gerin]{Lucas Gerin}
       \address[LG]{CMAP, \'Ecole Polytechnique, CNRS, Route de Saclay, 91128 Palaiseau Cedex, France}
       \email{gerin@cmap.polytechnique.fr}
 \author[A. Pierrot]{Adeline Pierrot}
 \address[AP]{LISN, Université Paris-Saclay, Bat. 650 Ada Lovelace, 91405 Orsay Cedex, France}
       \email{adeline.pierrot@lri.fr}
\keywords{intersection graphs, scaling limits, graphons, permutations, matchings, Dyck paths}
\subjclass[2020]{05C62, 05C80}
\begin{document}

\begin{abstract}
We obtain the scaling limits of random graphs drawn uniformly in three  families 
of intersection graphs: permutation graphs, circle graphs, and unit interval graphs. 
The two first families typically generate dense graphs, in these cases we prove a.s.~convergence to an explicit deterministic graphon.
Uniform unit interval graphs are nondense and we prove convergence in the sense of Gromov--Prokhorov 
after normalization of the distances: the limiting object is the interval $[0,1]$ endowed 
with a random metric defined through a Brownian excursion.
Asymptotic results for the number of cliques of size $k$
($k$ fixed) in a uniform random graph in each of these three families
are also given.\\
In all three cases, an important ingredient of the proof is that, 
for indecomposable graphs in each class 
(where the notion of indecomposability depends on the class), 
the combinatorial object defining the graph (permutation, matching, or intervals) is essentially unique. 
\end{abstract}

\maketitle

\section{Introduction}

\subsection{Background: random graphs in classes defined by intersections.}

For a collection of sets $\mathcal{C}$ and a $n$-tuple $S=(s_1,s_2,\dots,s_n )$ of elements in $\mathcal{C}$ (called {\em seed}), the \emph{intersection graph} associated with ${s_1,s_2,\dots,s_n}$ is the  graph with vertex set $\set{v_1,v_2,\dots ,v_n}$ in which two vertices $v_i\neq v_j$ are joined by an edge if and only if $s_i\cap s_j\neq \varnothing$. 
 
Families of intersection graphs associated to natural geometric or combinatorial collections $\mathcal{C}$ have been the object of particular interest. Among other,
the following graph classes have been studied in the literature:
\begin{itemize}
\item \emph{interval graphs}: $\mathcal{C}$ is the collection of intervals on the real line;
\item  \emph{unit interval graphs} (also called \emph{proper interval graphs} or \emph{indifference graphs}): $\mathcal{C}$ is the collection of intervals of length one on the real line;
\item \emph{circle graphs}: $\mathcal{C}$ is the collection of chords of a given circle;
\item \emph{circular arc graphs}: $\mathcal{C}$ is the collection of  arcs of a given circle;
\item \emph{string graphs}: $\mathcal{C}$ is the collection of curves in the plane;
\item \emph{permutation graphs}: $\mathcal{C}$ is the collection of 
straight line segments whose endpoints lie on two parallel lines. (This last definition is equivalent to defining permutation graphs as the \emph{inversion graphs} of permutations).
\end{itemize}
\begin{figure}
\begin{center}
\includegraphics[width=12cm]{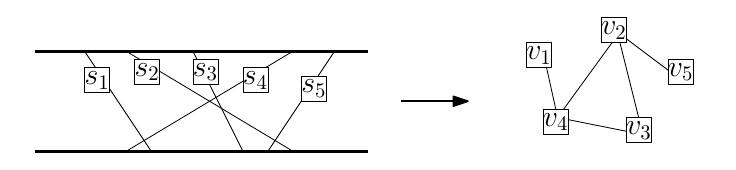}\\
\includegraphics[width=12cm]{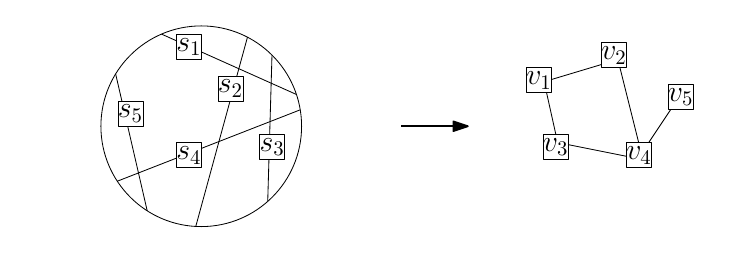}\\
\includegraphics[width=12cm]{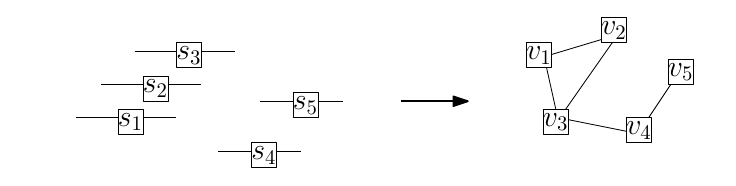}\\
\end{center}
\caption{Illustration of the three families studied in this article.  From top to bottom on the right: a permutation graph, a circle graph, a unit interval graph. In each case one of its representatives $(s_1,s_2,\dots,s_n)$ is shown on the left.
}
\label{fig:Examples}
\end{figure}

We refer the reader to \cref{fig:Examples}
for examples of intersection graphs in three of these families.
The Wikipedia page on the topic \cite{Wikipedia_IntersectionGraphs}
 contains a longer list of graph classes defined by intersection.
Intersection graphs have many applications and 
have been studied in details from an algorithmic point of view,
one problem being to recognize whether a graph is in a given family,
another one to improve the complexity of classical problems
knowing that the input is in the family.
We refer the reader to the books \cite{Golumbic,IntersectionGraphsBook} 
for many such examples.
\smallskip

Intersection graph models have also been of interest in the random graph community.
Here is a selection of references on the topic.
\begin{itemize}
\item Random interval graphs have been introduced and studied by 
Scheinerman \cite{scheinerman88interval} in the 80's -- see also Justicz, Scheinerman and Winkler \cite{scheinerman-et-al90interval}.
The model considered is the \enquote{uniform} model on intervals, \emph{i.e.}~the extremities
$a_i$, $b_i$ of the intervals are taken i.i.d.~uniformly at random in $[0,1]$, conditionally to $a_i < b_i$. We refer also to \cite{DiaconisJansonHolmes} for a discussion
 on graphon limits of such models and further references on random interval graphs.
\item The inversion graph of a uniform random permutation of size $n$ has been recently studied: see Bhattacharya and Mukherjee \cite{DegreePermutationGraphs} for results on the degree sequence 
and Gürerk, I\c slak and Yıldız \cite{StudyRandomPermutationGraphs} for results on the degree distributions, isolated vertices, cliques and connected components.
We also refer to Acan and Pittel \cite{AcanPittelPermutationGraphs} 
for an analysis of inversion graphs of uniform random permutations 
with a fixed number of inversions
(thus fixing the number of edges).
\item In a similar spirit, Acan has studied various properties of the intersection graph of
a uniform random chord diagram in \cite{AcanUniformChordalGraph};
see also Acan and Pittel \cite{AcanPittelChordalGraphs} 
for an analysis of intersection graphs of uniform random chord diagrams 
with a fixed number of crossings (fixing again the number of edges in the graph).
\item The graphon limit of a uniform random string graph has been considered
by Janson and Uzzell \cite{janson17string}, who identified a set of possible limit points,
and conjectured the actual graphon limit.
\item In a slightly different direction, there is an important literature
 around a model called {\em random intersection graphs},
see \cite{RandomIntersectionGraphsReview} and references therein; here a random set is attached to each vertex (most of the time
a uniform random subset with a fixed number of elements of a given set)
and two vertices are connected if their associated sets have a nonempty intersection.
This model is different from the ones cited above in that all graphs can be obtained this way,
and not only graphs from a given family.
\end{itemize}

\subsection{Uniform seeds versus uniform graphs and overview of the results}
\label{Sec:DiscussSeeds}
A noticeable fact in the literature review above is that, in most cases,
the authors consider a natural distribution on the set of \emph{seeds}
(most of the time the uniform one, or the uniform one subject to some size constraint).
This induced a distribution on intersection graphs which is not uniform on the corresponding class. 
(An exception to that is the work of Janson and Uzzell
on string graphs~\cite{janson17string}.)
In contrast, there is a growing literature on uniform random graphs 
in other classes (planar graphs \cite{NoyPlanarICM}
 or graphs embeddable in a given surface~ \cite{RandomGraphsSurface},
 subcritical block-stable classes~\cite{SubcriticalClasses}, 
 perfect graphs~\cite{RandomPerfect}, 
 cographs~\cite{NousCographes,BCographes,NousCographesIndependentSets}, \dots).
For families of intersection graphs however,
studying (or sampling) a uniform graph in the family is often harder than a uniform seed.

It is therefore natural to try to transfer results obtained from the uniform seed model to the uniform graph one, 
and this is the main purpose of our work. 
To this effect, we rely on some known results that, 
in many families of intersection graphs,
there exists some notion of indecomposable graphs,
for which indecomposable graphs can be represented by a {\em unique} seed
(up to some trivial symmetries).
Such uniqueness results have typically been discovered 
in the graph algorithm literature (they are helpful to design recognition algorithms),
and will be useful as well for our purposes.

In this article, we illustrate this approach on three of the families of intersection
graphs listed in the previous section, namely permutation graphs,
circle graphs, and unit interval graphs. 
Interestingly, we need to use a different notion of indecomposability
for each family: {\em prime for the modular decomposition} for permutation graphs,
{\em prime for the split decomposition} for circle graphs, 
and {\em connected} for unit interval graphs.

For each of these three families, we obtain a \enquote{scaling limit} result
for a uniform random graph in the class.
Asymptotic results for the number of cliques of size $k$
($k$ fixed) in a uniform random graph in each of these three families
are also given.

Permutation graphs and circle graphs are typically dense, 
in the sense that the number of edges is quadratic with respect to the number of vertices. We thus use the notion of dense graph limits, a.k.a.~graphon convergence.
 (Definitions and necessary background on graphons will be given in \cref{sec:Graphon}.)
 Namely, we prove that
 a uniform random permutation (resp.~circle) graph
 tends almost surely (a.s.~for short) 
 towards a deterministic limiting graphon $\Wperm$ (resp.~$\Wcircle$).
 The asymptotic result for the number of cliques of size $k$
($k$ fixed) follows as a corollary.
 
 On the other hand, uniform random unit interval graphs with $n$ vertices typically
  have $\Theta(n^{3/2})$ edges.
 We study their limit for the so-called Gromov--Prokhorov (GP) topology,
 which encodes typical distances between randomly sampled vertices.
 (GP convergence is rewiewed in \cref{sec:DefGP}.)
 We prove that, with respect to this topology, a uniform random unit interval graph
 converges towards the unit interval $[0,1]$, endowed with a random metric
 computed from a Brownian excursion.
 The asymptotics of the number of cliques can also been related
 to Brownian excursions
  (though this is not a direct consequence of the GP convergence).
The limiting object and the limiting random variables for renormalized numbers
of cliques are here random,
while they are deterministic for permutation and circle graphs.

\begin{remark}
It would be interesting to study uniform random interval graphs and compare them
with the interval graphs constructed from uniform random intervals considered
by Scheinerman and collaborators \cite{scheinerman88interval,scheinerman-et-al90interval}. Interval graphs are naturally encoded by matchings of the set $\{1,\dots,2n\}$
(the numbers represent the extremities of all intervals in increasing order, and
the endpoints of a given interval are matched together).
 A criterion for unique representability has been given in~\cite[Theorem 1]{hanlon1982interval}, 
but it is intricate and not naturally amenable to the methods of this paper. 

Another interesting family of intersection graphs is that of string graphs:
as mentioned above,
a conjecture regarding the graphon limit of a uniform random string graph
has been formulated by Janson and Uzzell \cite{janson17string}.
But in the case of string graphs, we are not aware of an encoding through 
purely combinatorial objects.

Together with interval and string graphs, the three families of intersection graphs
studied here -- permutation, circle and unit interval graphs -- are the most studied,
explaining our choice to consider them here.
\end{remark}

\subsection{Outline of the article}

\begin{itemize}
\item \cref{sec:results} gives more background on the graph classes studied in this paper. All the results of the paper are  then stated precisely.
\item \cref{sec:Toolboxes} reviews the notions of graphon convergence and Gromov--Prokhorov convergence.
\item Then we separately deal with each family:
\begin{itemize}
\item  \cref{sec:permutation} is devoted to permutation graphs;
\item  \cref{sec:circle} is devoted to circle graphs (for ease of reading, the proofs of two technical results are postponed to the Appendix \ref{Sec:Appendice});
%in Sections \ref{Sec:AppendiceMatching} and \ref{Sec:AppendicePoisson});
\item  \cref{sec:unit-interval} is devoted to unit interval graphs.
\end{itemize}
\end{itemize}

\section{Results}
\label{sec:results}

\subsection{Permutation graphs}

Permutation graphs have been introduced by Even, Lempel and Pnueli 
in \cite{PermutationGraphs1,PermutationGraphs2}.
For a permutation $\sigma$, we denote by $G^\ell_{\sigma}$ the graph with $|\sigma |$ vertices obtained by the following construction:
\begin{itemize}
\item $G^\ell_{\sigma}$ has vertex set $\set{1,\dots,|\sigma |}$ ;
\item put an edge $i\leftrightarrow j$ if and only if $\{i,j\}$ is an inversion of $\sigma$, \emph{i.e.} $\left(\sigma(i)-\sigma(j) \right)(i-j)<0$.
\end{itemize}
We denote $G_\sigma$ the unlabeled version of $G^\ell_{\sigma}$.
It is called the \emph{inversion graph} of $\sigma$.
A \emph{permutation graph} is a (unlabeled) graph $G$ such that $G = G_\sigma$ for some permutation $\sigma$.
Such a permutation $\sigma$ is then said to \emph{realize} $G$, or is called a {\em realizer} of $G$.

Permutation graphs have been intensively studied from an algorithmic point of view,
see, \emph{e.g.}, \cite{crespelle2010dynamic} and references therein,
or \cite{berard2007reversals} for an application to genomics.

We obtain the following  scaling limit result for a uniform random graph
in this class, with an explicit deterministic limit in the sense of graphons.

\begin{theorem}\label{Th:ConvUnifPermutationGraph}
For each $n \ge 1$, let ${\bm G}_n$ be a  uniform random unlabeled permutation graph with $n$ vertices.
In the space of graphons,
$$
{\bm G}_n \,\stackrel{n\to +\infty}{\longrightarrow}\, \Wperm,\ \text{a.s.},
$$
where $\Wperm$ is defined in~\cref{def:Winv} and \cref{prop:cv_Gsigma}.
\end{theorem}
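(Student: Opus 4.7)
The plan is to reduce the statement to a convergence result for the inversion graphs of uniform simple permutations, using the modular decomposition of permutation graphs as the bridge. Every permutation graph admits a unique modular decomposition tree whose internal nodes are either series (a clique operator), parallel (an anticlique operator), or prime. The crucial combinatorial fact is that a prime permutation graph on $k \ge 4$ vertices is realized by a simple (i.e., modularly indecomposable) permutation which is essentially unique — only up to the involutions $\sigma \leftrightarrow \sigma^{-1}$ and $\sigma \leftrightarrow \bar\sigma$ (the reverse-complement). Uniform prime permutation graphs thus correspond, up to a bounded symmetry factor, to uniform simple permutations, so a scaling limit for one transfers to a scaling limit for the other.

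The first step is to show that a uniform random permutation graph $\bm G_n$ is, with high probability, essentially prime: its modular decomposition has a prime root of size $k = n - o(n)$, with modules of size $o(n)$ at the children of the root. I would establish this via a singularity-analysis argument on the ordinary generating functions $G(z)$ of permutation graphs and $P(z)$ of prime permutation graphs, related through the substitution operator encoding the modular decomposition. Because the number of simple permutations satisfies $s_n \sim e^{-2}\, n!$, the dominant contribution to $[z^n] G(z)$ comes from trees in which a single prime node is macroscopic, and a standard boosting-of-the-largest-atom argument yields the required concentration for the prime-root size.

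The second step is the convergence of the inversion graph of a uniform simple permutation to $\Wperm$. Since $s_n/n! \to e^{-2} > 0$, a uniform simple permutation is absolutely continuous with bounded density with respect to a uniform permutation conditioned on an event of positive limiting probability; they therefore share the Lebesgue permuton $\munif$ on $[0,1]^2$ as their almost sure permuton limit. Invoking the continuity of the permuton-to-graphon map set up in \cref{def:Winv} and \cref{prop:cv_Gsigma}, the inversion graph of a uniform simple permutation converges almost surely (in the cut metric) to $\Wperm$. Transferring this back to $\bm G_n$, the small modules attached at the vertices of the prime root only perturb $o(n^2)$ adjacencies — negligible in cut distance — so $\bm G_n \to \Wperm$ almost surely in the graphon sense. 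The main technical obstacle will be the first step: the concentration estimate on the prime-root size requires enumerative asymptotics for permutation graphs themselves (not just for permutations), and this is the point where genuinely graph-theoretic combinatorics, rather than standard permutation tools, must be brought to bear.
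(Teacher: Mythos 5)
Your route is genuinely different from the paper's, and it has a real gap at the step you yourself flag.

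The paper never needs to understand the modular-decomposition tree of a \emph{typical} permutation graph, and in particular never needs the macroscopic-prime-root concentration that you plan to obtain via singularity analysis. Instead it runs a direct counting comparison: writing
$\mathbb P\big[\delta_\Box(\bm G_n, \Wperm)\ge\eps\big]$
as a ratio, the numerator is bounded above by the number of permutations $\sigma$ with $\delta_\Box(G_\sigma,\Wperm)\ge\eps$ (each permutation graph has at least one realizer), and the denominator is bounded below by the number of modular-prime permutation graphs, which by \cref{prop:nb-antecedents-simple} is at least one quarter of the number of simple permutations, hence $\ge n!/30$ for large $n$. This gives
$\mathbb P\big[\delta_\Box(\bm G_n, \Wperm)\ge\eps\big]\le 30\,\mathbb P\big[\delta_\Box(G_{\sigma_n},\Wperm)\ge\eps\big]$
for a uniform $\sigma_n$, and the right-hand side is summable by \cref{lem:EstimeeSample}; Borel--Cantelli finishes. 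The upper bound does not care that the numerator counts "bad" graphs several times over.

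Your first step — showing that $\bm G_n$ has a prime root of size $n-o(n)$ via generating-function singularity analysis — is the place where the argument would stall. You correctly identify this as the hard point, and it is not merely a technical loose end. Permutation graphs are unlabeled, so the substitution grammar underlying the modular decomposition is not a clean labeled specification to which Flajolet--Sedgewick transfer theorems and "boosting the largest atom" apply directly; one would have to account for symmetries of realizers, and the mapping from permutations to permutation graphs is not uniformly finite-to-one across all graphs (modules inflate the fiber sizes unboundedly). Until you have asymptotics for the number of prime permutation graphs relative to all permutation graphs, you cannot run the concentration argument. The paper's observation is precisely that you can get the theorem without this — a crude two-sided count is enough. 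Your second step (transferring the permuton limit of a uniform simple permutation, then arguing the attached small modules perturb only $o(n^2)$ adjacencies) would be fine \emph{if} the first step were in hand, but as written the proposal does not deliver the theorem.
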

\begin{figure}[h!]
\begin{center}
\includegraphics[width=7cm]{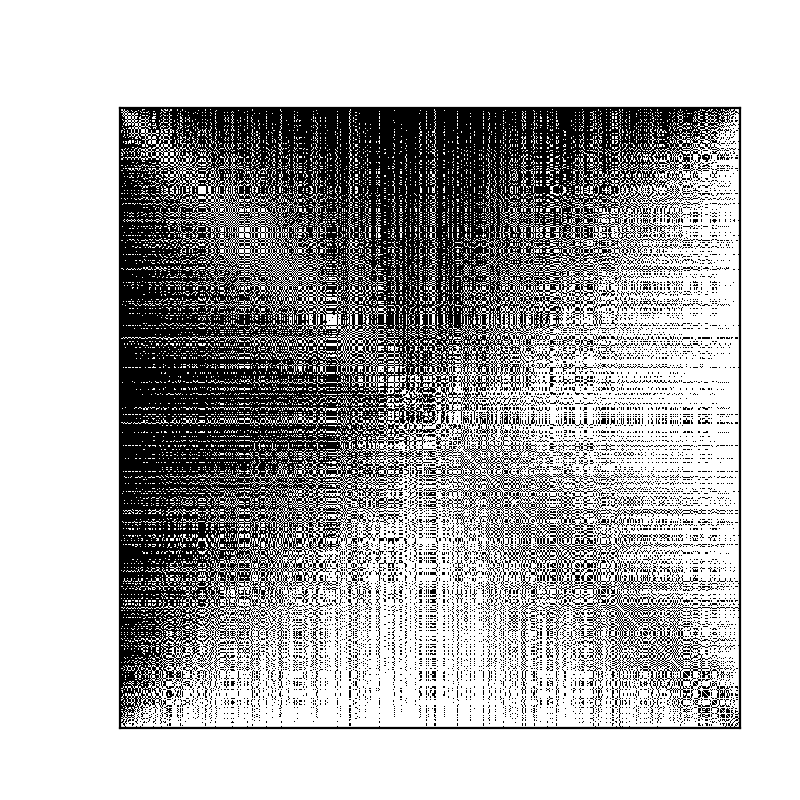}
\ 
\includegraphics[width=7cm]{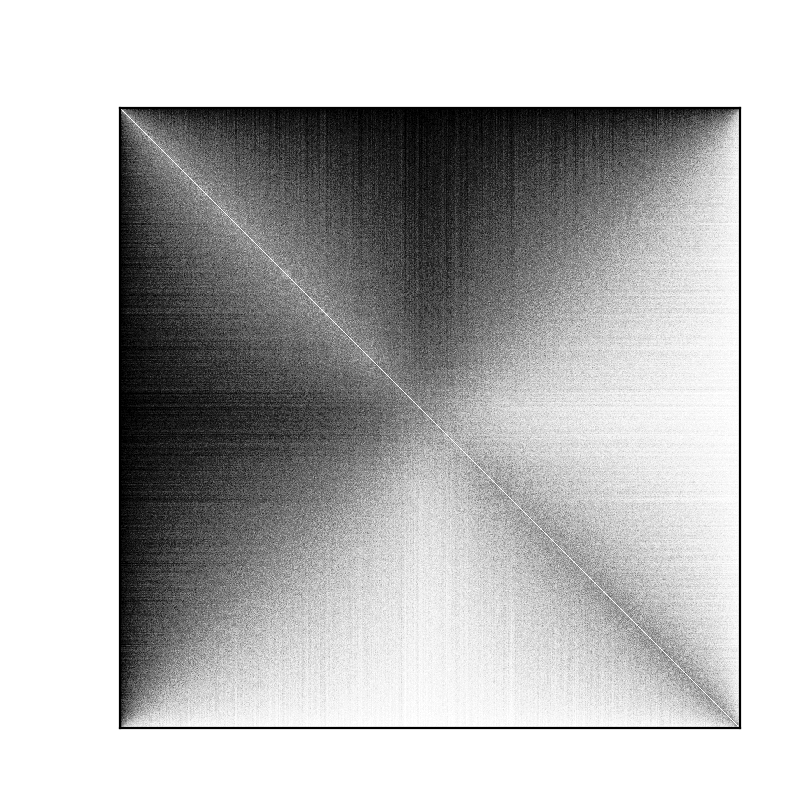}
\end{center}
\caption{Left: The adjacency matrix of the permutation graph $G_{\sigma_n}$ of a uniform random permutation $\sigma_n$ of size $n=1000$, where vertices are  ordered by decreasing degrees. Note that $G_{\sigma_n}$ is not a uniform permutation graph but \cref{prop:cv_Gsigma} ensures that this is a fair approximation of the graphon $\Wperm$. 
Right: The average of $50$ independent adjacency matrices of graphs $G_{\sigma_n}$ for $n=1000$, all ordered by decreasing degrees.}
\label{fig:Wperm}
\end{figure}

To illustrate  \cref{Th:ConvUnifPermutationGraph}, 
we plot in \cref{fig:Wperm} the adjacency matrix of a large random permutation graph
(informally, graphon convergence can be seen as the convergence 
of the rescaled adjacency matrix with a well-chosen order of vertices).

 An interesting feature of graphon convergence is that it encodes the
 convergence of all subgraph counts, correctly renormalized
 (see, \emph{e.g.}, \cite[Chapter 11]{LovaszBook}).
 In the present case, the density of cliques of size $k$ (for all fixed $k$)
 in the limiting graphon  $\Wperm$ 
 can be easily determined (see \cref{prop:MarginalesWinv}).
As a consequence of  \cref{Th:ConvUnifPermutationGraph} we obtain the following estimates:
$$
\frac{\#\set{\text{edges of }{\bm G}_n  }}{\binom{n}{2}} \to \frac12\ \text{a.s.}, \qquad 
\frac{\#\set{\text{triangles of }{\bm G}_n  }}{\binom{n}{3}} \to \frac16\ \text{a.s.},
$$
and more generally for all $k\geq 1$, 
$\displaystyle\frac{\#\set{\text{cliques of size $k$ in }{\bm G}_n  } }{\binom{n}{k}}\to \frac{1}{k!}\ $a.s..

\medskip
\subsection{Circle graphs}

The second family considered in this article is the one of circle graphs, introduced by Even and Itai \cite{CircleGraphs}.
Circle graphs are intersection graphs of chords in a disk. These chords can be seen as a matching between points (corresponding to the endpoints of the chords) along a circle. Circle graphs have been extensively studied from an algorithmic point of view, see the survey \cite{surveyCircle} and references therein. 
The complexity of their recognition posed in \cite{Golumbic} has received considerable attention, see \emph{e.g} \cite{circle-recogn1,circle-recogn2,Gabor}, and has finally been shown to be subquadratic in \cite{circle-recogn3}.
Among other things, circle graphs appear naturally in some routing problems, see \cite{routing-circle}.

As for permutation graphs, we obtain an explicit deterministic limit in the sense of graphons
for a uniform random graph in the class.

\begin{theorem}\label{Th:ConvUnifCircleGraph}
For each $n \ge 1$, let ${\bm G}_n$ be a  uniform random unlabeled circle graph with $n$ vertices.
In the space of graphons,
$$
{\bm G}_n \,\stackrel{n\to +\infty}{\longrightarrow}\, \Wcircle,\ \text{a.s.},
$$
where $\Wcircle$ is defined in~\cref{def:Wcircle} and \cref{prop:gcross}.
\end{theorem}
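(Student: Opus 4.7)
The plan is to mirror the strategy used for permutation graphs in \cref{Th:ConvUnifPermutationGraph}, with the split decomposition taking over the role played there by the modular decomposition. The three ingredients are: (i) almost sure convergence of the intersection graph of a uniform random chord diagram to $\Wcircle$, which is the content of \cref{prop:gcross} referenced in the statement; (ii) a classical uniqueness theorem (due to Bouchet and reproved by several authors, notably in the split-decomposition tradition) asserting that a circle graph which is prime for the split decomposition admits a chord-diagram realizer unique up to the dihedral symmetries of the underlying circle; and (iii) the fact that a uniform random chord diagram on $2n$ points yields a prime (for split decomposition) circle graph with probability tending to $1$ as $n \to \infty$.

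For ingredient (i), let $\bm{M}_n$ be a uniform random matching on $2n$ points of the circle and let $G(\bm{M}_n)$ be its intersection graph. I would compute, for each fixed graph $F$ on $k$ vertices, the expected density $\esper[t(F,G(\bm{M}_n))]$ by sampling $k$ chords uniformly at random from $\bm{M}_n$; this is asymptotically equivalent to sampling $k$ chords with independent uniform endpoints on the circle, which yields a closed-form expression that one then recognises as $t(F,\Wcircle)$. Almost sure convergence follows from a bounded-differences concentration inequality, since swapping two endpoints of $\bm{M}_n$ alters any fixed subgraph density by $O(1/n)$. For ingredient (iii), I would identify the combinatorial patterns on the circle that witness a non-trivial split of the associated circle graph (configurations where the cyclic order of the endpoints factors through a nontrivial bipartition of the chords), and show via a first-moment bound that the expected number of such patterns in a uniform matching tends to $0$.

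The transfer step then concludes the proof. Let $\bm{G}_n$ be a uniform random unlabeled circle graph with $n$ vertices. By (ii) and (iii), prime circle graphs asymptotically dominate the class, and each prime circle graph is realised by the same number of chord diagrams, namely the order of the dihedral group acting on the $2n$ endpoints. Consequently, the uniform measure on circle graphs conditioned on primeness coincides with the push-forward by $M \mapsto G(M)$ of the uniform measure on chord diagrams conditioned on primeness; since both conditioning events have probability $1-o(1)$, a standard coupling argument transfers the almost sure convergence $G(\bm{M}_n) \to \Wcircle$ to $\bm{G}_n \to \Wcircle$, the graphon limit being insensitive to discrepancies on a vanishing-probability event. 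The main obstacle is ingredient (iii): giving a clean taxonomy of the split-inducing configurations on the circle and producing a sufficiently sharp first-moment bound on their count is the delicate combinatorial heart of the argument, which is presumably why two of the technical proofs are deferred to \cref{Sec:Appendice}.
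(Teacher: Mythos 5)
Your ingredient (iii) is false, and this breaks your transfer step. In fact, a uniform random matching of size $n$ is \emph{not} asymptotically almost surely indecomposable; the paper shows (\cref{lem:not-2-decomposable} together with \cref{lem:not-k-decomposable}) that $\mathbb{P}(M_n \text{ indecomposable}) \to e^{-3} \approx 0.05$. The obstruction is $2$-decomposability: there are typically a Poisson(1)-distributed number of chords joining adjacent circle points, a Poisson(1) number joining points at distance $2$, and a Poisson(1) number of pairs of consecutive points matched to another pair of consecutive points, and any one of these forces a nontrivial split. Since your coupling argument explicitly requires the conditioning event (primeness) to have probability $1-o(1)$ in both the matching model and the graph model, the argument collapses once the matching-side probability is bounded away from $1$.

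The paper's proof is built to survive this. It is not a coupling but a double-counting inequality: the numerator $\#\{G : \delta_\Box(G,\Wcircle)\geq\eps\}$ is bounded above by lifting each such $G$ to its matching realizers (splitting according to whether $G$ has at least $4n$ or fewer than $4n$ realizers, and controlling the second case by \cref{prop:<n-representants}), while the denominator $\#\mathcal{G}^{\text{circle}}_n$ is bounded \emph{below} via split-prime circle graphs, each of which arises from at most $4n$ indecomposable matchings (\cref{coro:1a4n-matching-pr-premier}). For the lower bound, a constant positive proportion of indecomposable matchings — here $> e^{-4}$, from \cref{lem:nb-indecomposable} — is already enough: it costs only a multiplicative constant $e^4$, leaving a summable bound $\mathbb{P}[\delta_\Box(\bm G_n,\Wcircle)\geq\eps] \le e^4\,\mathbb{P}[\delta_\Box(G_{M_n},\Wcircle)\geq\eps] + o(n^{1-n/3})$, to which Borel--Cantelli applies. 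A secondary issue in your sketch is the claim that every prime circle graph has \emph{exactly} $|D_{2n}| = 4n$ realizers; a realizer with a nontrivial dihedral symmetry has fewer, and the paper has to quantify how rare this is (\cref{prop:<n-representants}, a superexponential bound) precisely because the counting argument needs both an upper and a lower control on the number of realizers. Finally, for ingredient (i) your route via subgraph-density convergence plus a bounded-differences concentration inequality would also work, but the paper's sampling argument ($\mathrm{Sample}_n(\Wcircle)$ is distributed exactly as $G_{M_n}$, then apply \cref{lem:EstimeeSample}) is shorter and gives the summability needed for a.s.\ convergence for free.
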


\begin{figure}
\begin{center}
\includegraphics[width=7cm]{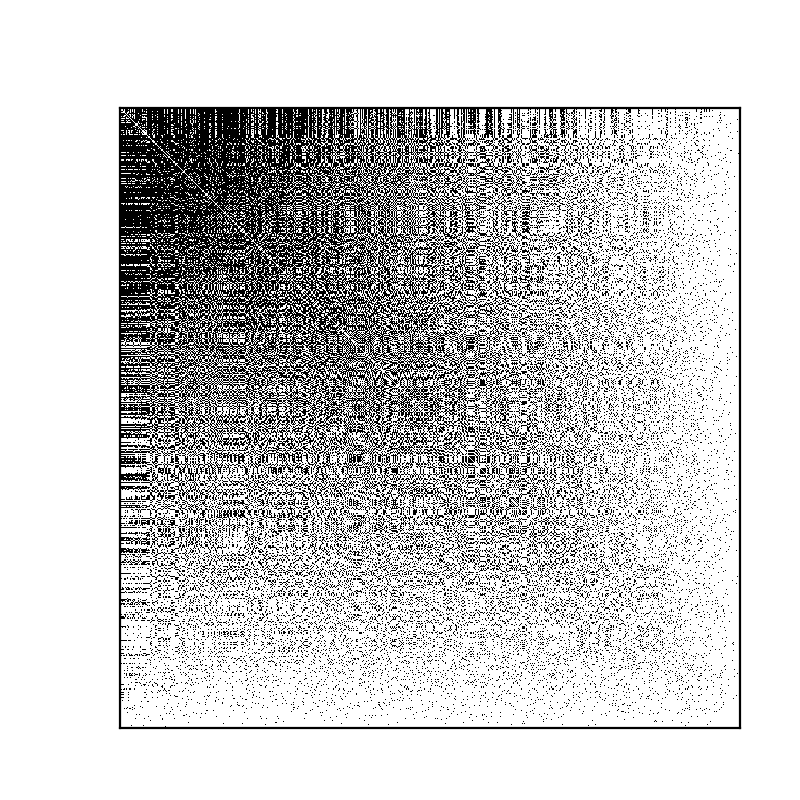}
\ 
\includegraphics[width=7cm]{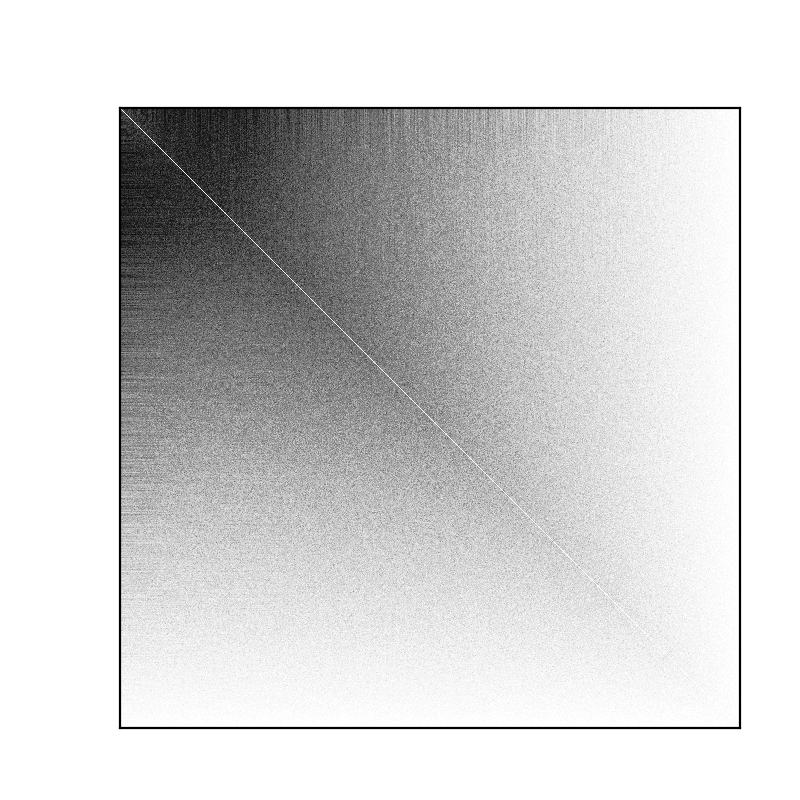}
\end{center}
\caption{Left: The adjacency matrix of the intersection graph $G_{M_n}$ of a uniform matching $M_n$ of size $n=1000$, where vertices are  ordered by decreasing degrees. Note that $G_{M_n}$ is not a uniform circle graph but \cref{prop:gcross} ensures that this is a fair approximation of the graphon $\Wcircle$. 
Right: The average of $50$ independent adjacency matrices of graphs $G_{M_n}$ for $n=1000$, all ordered by decreasing degrees.
}
\label{fig:Wcircle}
\end{figure}

 \cref{Th:ConvUnifCircleGraph} is illustrated in \cref{fig:Wcircle}.
As for $\Wperm$, densities of cliques 
in $\Wcircle$ can be computed easily (see \cref{prop:MarginalesWmatch}),
and  \cref{Th:ConvUnifCircleGraph}
has the following concrete corollary:
$$
\frac{\#\set{\text{edges of }{\bm G}_n }}{\binom{n}{2}} \to \frac13\ \text{a.s.}
,\qquad 
\frac{\#\set{\text{triangles of }{\bm G}_n }}{\binom{n}{3}} \to \frac{1}{15}\ \text{a.s.}
,$$ 
and more generally for all $k\geq 1$, 
$\displaystyle \frac{\#\set{\text{cliques of size $k$ in }{\bm G}_n }}{\binom{n}{k}} \to \frac{2^{k}\, k!}{(2k)!}\ $a.s..

\subsection{Unit interval graphs}
The third family studied in this article is the one of unit interval graphs. 
Unit interval graphs are intersection graphs of intervals of unit length.
From \cite{Rob69} they are equivalent to proper interval graphs, which are intersection graphs
of sets of intervals, where no interval contains another one, and also to claw-free interval graphs
(the claw, also denoted $K_{1,3}$, is the graph with $4$ vertices and $3$ edges such that $1$ vertex is linked with the $3$ other ones).
It is possible to test whether a given graph is a unit interval graph in linear time \cite{LO93}.
We refer the reader to \cite{unit-interval} for other equivalent characterizations
and algorithmic results on unit interval graphs. 

We prove the convergence of unit interval graphs
with a renormalized distance function in the sense of the Gromov--Prokhorov topology.
To this end, for a finite graph $G$ with vertex set $V_G$,
we denote $d_G$ the associated graph distance
and $m_{V_G}$ the uniform distribution on $V_G$.
We recall that a \emph{metric measure space} (called mm-space for short) is a triple $(X,d,\mu)$,
where $(X,d)$ is a metric space and $\mu$ a probability measure on $X$
(for the Borel $\sigma$-algebra induced by $d$).
In the following, $\Leb$ denotes the Lebesgue measure on $[0,1]$.
 \begin{theorem}
 \label{thm:unit-interval}
 Let $\bm G_n$ be a uniform random unlabeled unit interval graph with $n$ vertices.
 The following convergence of random mm-spaces holds in distribution in the Gromov--Prokhorov topology:
 \[(V_{\bm G_n},\tfrac{1}{\sqrt n}\,d_{\bm G_n},m_{V_{\bm G_n}}) \longrightarrow
  ([0,1], \tfrac{1}{\sqrt 2}\,d_{\mathbb e},\Leb),\]
 where $\mathbb e$ is a random Brownian excursion of length 1
 and $d_{\mathbb e}$ is defined by the formula:
 for $x<y$ in $[0,1]$, we have
 \begin{equation}\label{eq:Un_Sur_e}
 d_{\mathbb e}(x,y)=\int_x^y \frac{dt}{\mathbb e(t)}.
 \end{equation}
 \end{theorem}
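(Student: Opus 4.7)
The plan is to encode connected unit interval graphs by Dyck paths, reduce the problem to the uniform connected case, and relate the graph distance to an integral functional of the path that, after rescaling, produces the announced integral against $1/\mathbb{e}$.

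First, a connected unit interval graph $G$ on $n$ vertices admits a canonical linear ordering of its vertices $1,\dots,n$ (unique up to reflection) under which each closed neighborhood $N[i]$ is an interval $\{a_i,\dots,b_i\}$ of consecutive integers containing $i$. The right-degree sequence $r_i := b_i - i$ uniquely determines $G$ and is itself classically in bijection (modulo the reflection symmetry) with a Dyck-type lattice path $D_G$ of length of order $n$ that stays positive on its interior. Thus uniform connected unit interval graphs of size $n$ are essentially in bijection with uniform Dyck paths. To reduce to the connected case, one combines the multiset decomposition $\text{unit interval graphs} = \mathrm{MSET}(\text{connected ones})$ with the Catalan-type $(1-x/\rho)^{1/2}$ singularity of the corresponding generating function: the uniform random unit interval graph $\bm G_n$ has a unique giant connected component of size $n - o(n)$ in probability, and the negligible small components contribute a vanishing mass in the Gromov--Prokhorov sense.

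A classical theorem (Kaigh, Aldous) states that, for $D_n$ uniform on Dyck paths of length $2n$, the rescaled contour $t \mapsto (2n)^{-1/2}\, D_n(\lfloor 2nt \rfloor)$ converges in distribution in $C([0,1])$ to a standard Brownian excursion $\mathbb{e}$. The key analytic step is to identify graph distances with an integral of $1/D_n$. Given the canonical ordering, the greedy algorithm — from vertex $v$, jump to $v + r_v$, or directly to $j$ if closer — computes $d_G(i,j)$ exactly for $i<j$. By a standard comparison with the ODE $\dot{x} = r(x)$, together with the identification of $r_i$ with the Dyck path height at the corresponding position, one obtains
\[
 d_G(i,j) \;=\; (1+o(1))\int_i^j \frac{du}{D_G(2u)}
\]
uniformly in $i<j$ away from the boundary. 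Combined with the Brownian scaling and the change of variables $s = u/n$, this yields, for $0 < x < y < 1$,
\[
 \frac{1}{\sqrt n}\, d_{\bm G_n}(\lfloor nx \rfloor, \lfloor ny \rfloor) \;\xrightarrow[n\to\infty]{d}\; \int_x^y \frac{ds}{\sqrt 2 \,\mathbb{e}(s)} \;=\; \tfrac{1}{\sqrt 2}\, d_{\mathbb{e}}(x,y),
\]
the factor $\sqrt 2$ being produced by the $(2n)^{-1/2}$ in the Dyck path scaling.

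To conclude, we invoke the Greven--Pfaffelhuber--Winter characterization of Gromov--Prokhorov convergence through the joint law of distance matrices of i.i.d.~samples. Picking $k$ independent uniform vertices $V_1, \dots, V_k$ of $\bm G_n$, the rescaled positions $V_\ell/n$ converge to i.i.d.~uniforms $U_1, \dots, U_k$ on $[0,1]$, independently of the path limit $\mathbb{e}$; the joint distance matrix $(n^{-1/2}\, d_{\bm G_n}(V_i, V_j))_{i,j}$ then converges in distribution to $(\tfrac{1}{\sqrt 2}\, d_{\mathbb{e}}(U_i, U_j))_{i,j}$. The integrability of $1/\mathbb{e}$ at the endpoints (where $\mathbb{e}(t) \sim \sqrt t$ by the BES(3) local behavior of the excursion) ensures these limit distances are a.s.~finite. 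The hard part will be making the distance--integral approximation rigorous and uniform enough: one must control the ratio $r_i / D_G(2i)$, the fluctuations of $D_n$ around its macroscopic shape, and — most delicately — the contribution of samples $V_\ell$ falling near the boundary $\{0,n\}$, where both the path height and the right-degree vanish while the limiting integrand blows up; handling this regime requires sharp excursion estimates uniform in $n$.
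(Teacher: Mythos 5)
Your proposal follows the same overall route as the paper (encode connected unit interval graphs by Dyck paths, use the convergence of the rescaled height to a Brownian excursion, and relate graph distances to a Riemann sum of reciprocal heights), but it differs in the criterion used to establish Gromov--Prokhorov convergence. You invoke the Greven--Pfaffelhuber--Winter characterization via joint distance matrices of i.i.d.\ samples; the paper instead uses L\"ohr's theorem that the box distance $\Box$ metrizes the same topology, and constructs an explicit relation $R_{n,\delta}$ on $[\delta,1-\delta]$ with a coupling $\nu$ satisfying $\nu(R_{n,\delta})=1-2\delta$. This is precisely why the boundary issue you correctly flag as the ``hard part'' disappears in the paper's approach: the box-distance bound only requires $\nu(R)$ large, so one can simply exclude the $2\delta$-fraction of mass near $\{0,1\}$, take $\delta\to 0$ afterwards, and never need sharp excursion estimates at the endpoints. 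Your distance-matrix route can also be made to work by conditioning on all samples landing in $[\delta,1-\delta]$ and letting $\delta\to 0$ at the end, but it genuinely requires the extra bookkeeping you anticipate.

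Two points in your sketch are lighter than what a complete proof requires. First, ``essentially in bijection'' hides the fact that the map from irreducible Dyck paths to connected unit interval graphs is $2$-to-$1$ \emph{except} on palindromic paths, where it is $1$-to-$1$; so the pushforward of the uniform law on Dyck paths is not quite uniform on connected graphs, and one must show palindromic paths are asymptotically negligible (which follows from comparing $\binom{n-1}{\lfloor (n-1)/2\rfloor}$ with $\frac1n\binom{2n-2}{n-1}$). Second, the ``ODE comparison'' step is the technical heart and needs an actual argument: the paper proves an exact combinatorial formula $d_{G_w}(v_i,v_j)=\lceil\sum_{k=i}^{j-1}1/f_w(\max\{i_m:i_m\le k\})\rceil$ for a greedy-jump sequence $(i_m)$, then shows (via a concentration estimate for the proportion of up-steps on sub-intervals) that $f_w$ and the height $h_w$ agree up to a factor $1+O(n^{-0.1})$ away from the boundary, and that the greedy jump $\max\{i_m:i_m\le k\}$ stays within $n^{0.45}$ of $k$. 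Replacing this by a continuum ODE heuristic leaves the core approximation unjustified. Your reduction from general to connected graphs via the square-root singularity of the generating series is exactly what the paper does (the paper even gets $n-L_n=O_{\mathbb P}(1)$ via Gourdon, sharper than the $o(n)$ you state, but $o(n)$ suffices for the GP limit).
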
 
 \noindent \cref{thm:unit-interval} is illustrated on \cref{fig:simu_unit}.

\begin{figure}[h!]
\begin{center}
\includegraphics[width=7cm]{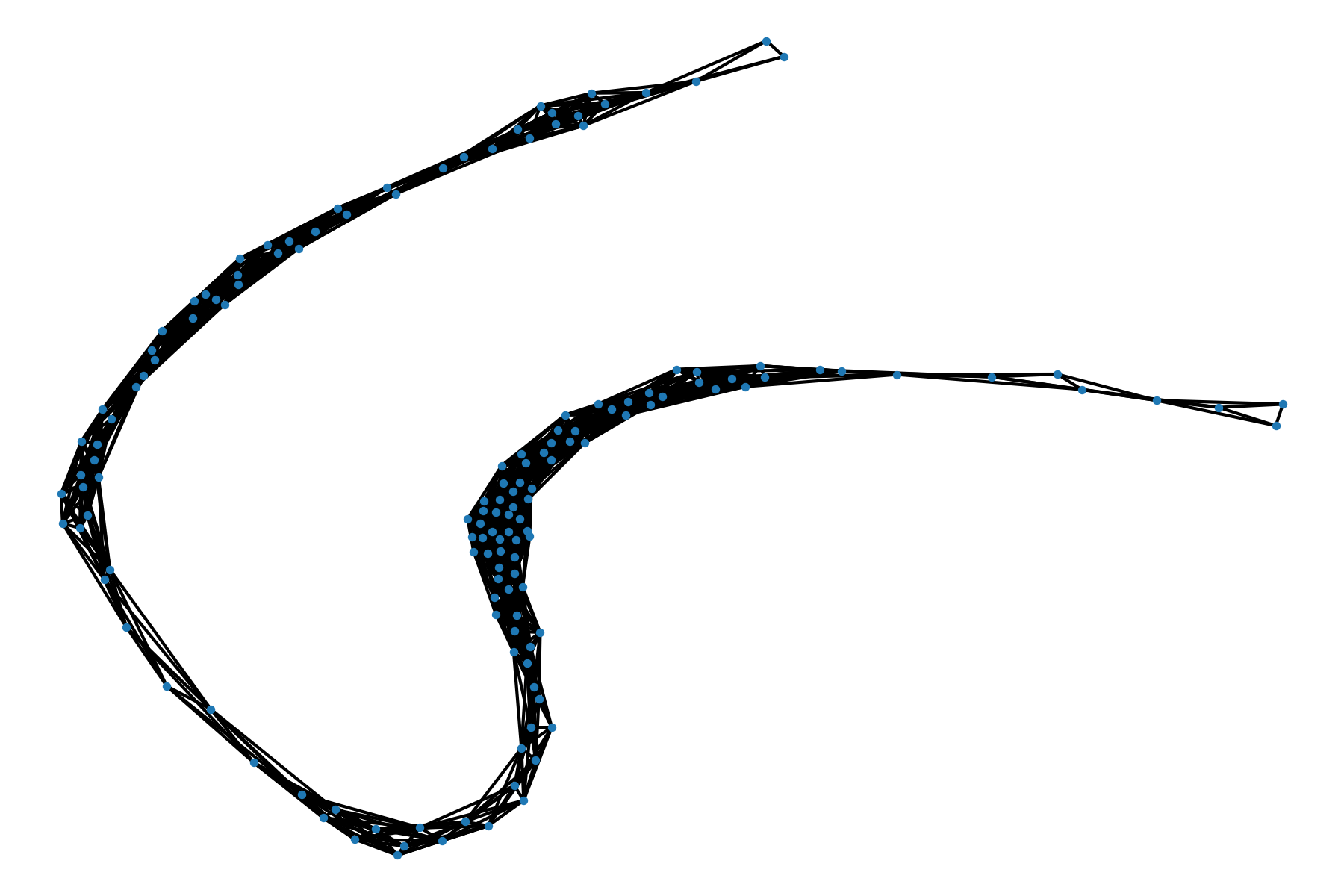}
\ 
\includegraphics[width=7cm]{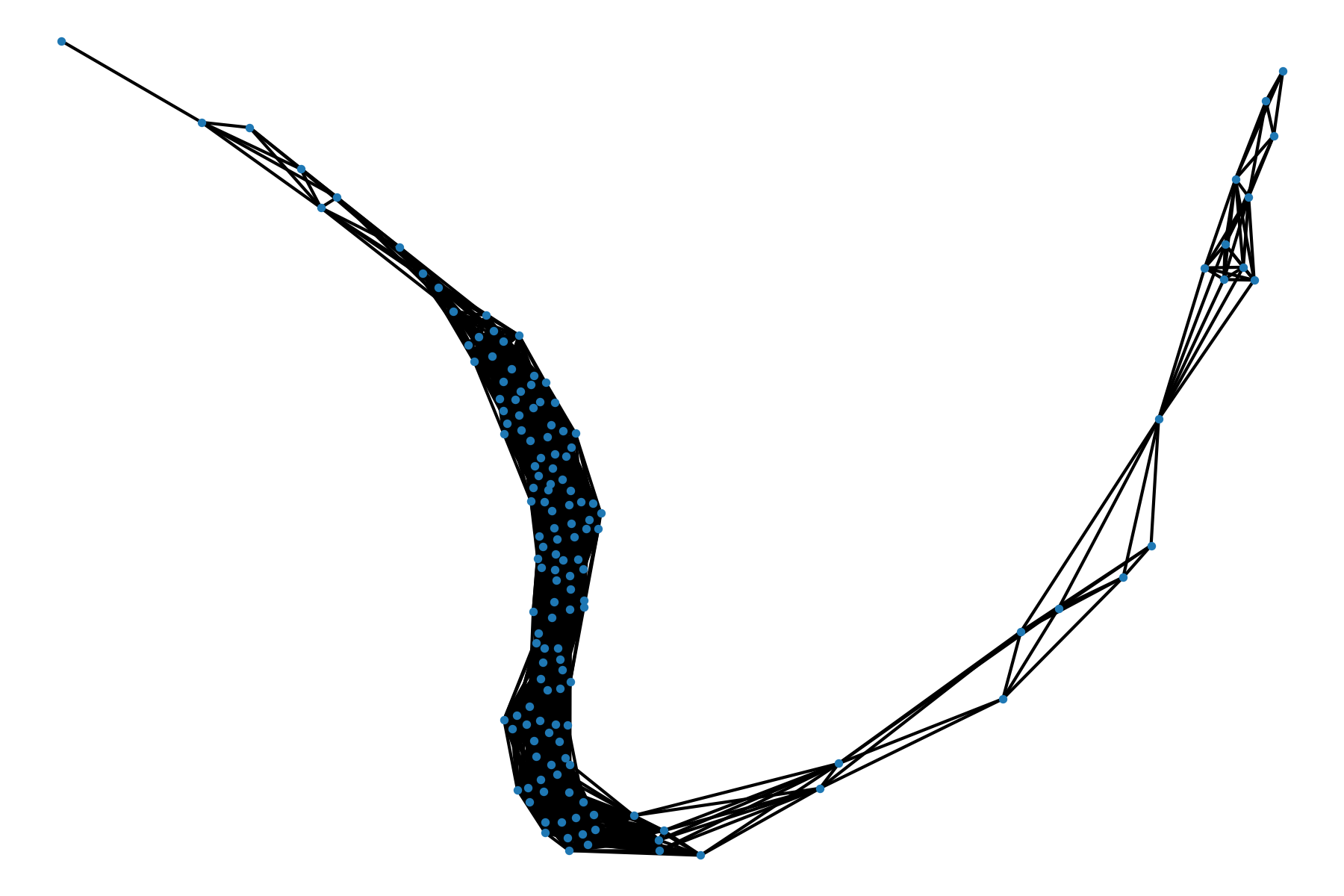}
\end{center}
\caption{Two samples of uniform connected unit interval graphs with $n=150$ vertices. Plots of adjacency matrices are not relevant for nondense graphs so we rather show geometric embeddings of graphs (they were obtained with the \texttt{python} library \texttt{networkx}).
These graph drawings illustrate the fact that the limiting object is  one-dimensional
with a variable density of vertices.
}
\label{fig:simu_unit}
\end{figure}

In particular, graph distances in $\bm G_n$ are typically of order $\sqrt n$.
The intuition behind the nice form of \eqref{eq:Un_Sur_e} is given in \cref{lem:distance_unit}: this combinatorial lemma indeed rewrites the distances in $\bm G_n$ in terms of a sum of inverses of some kind of height function of a Dyck path encoding the graph structure.

Though this does not follow from Gromov-Prokhorov convergence, 
we are also able to describe the asymptotic behavior of the number of cliques of size $k$ ($k$ fixed) in $\bm G_n$.
We obtain the following theorem.

\begin{theorem}
\label{thm:number-of-cliques}
Let $\bm G_n$ be a uniform random unit interval graph with $n$ vertices.
Let also $\mathbb e$ be a Brownian excursion and $X_k=\int_{0}^1 \mathbb e(t)^k dt$.
Then for any $K \ge 1$, we have the following joint convergence in distribution:
\[ \left( \frac{\#\set{\text{cliques of size $k$ in }\bm G_n }}{n^{\frac{k+1}2}} \right)_{2 \le k \le K} 
\,\stackrel{n\to +\infty}{\longrightarrow}\,
\left( \frac{2^{\frac{k-1}2}}{(k-1)!}  X_{k-1}\right)_{2 \le k \le K},\]
where the $X_k$ in the right-hand side are computed from the same 
realization of the excursion $\mathbb e$.
\end{theorem}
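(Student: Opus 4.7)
The plan is to express $N_k(\bm G_n):=\#\{\text{cliques of size }k\text{ in }\bm G_n\}$ as a sum involving the heights of the Dyck path encoding $\bm G_n$, and then to conclude by a Riemann-sum argument based on the convergence of the rescaled Dyck path to the normalized Brownian excursion. As a preliminary reduction, the same kind of argument used in the proof of \cref{thm:unit-interval} shows that the total contribution of small connected components to $N_k$ is $O(n)=o(n^{(k+1)/2})$, so we may and do assume that $\bm G_n$ is connected.

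\textbf{Combinatorial formula.} Sort the unit intervals $[a_i,a_i{+}1]$ of a representative of $\bm G_n$ by left endpoint and record the chronological sequence of $2n$ start/end events: this gives a Dyck path $D_n$ of length $2n$, positive on $(0,2n)$. Since a subset of unit intervals forms a clique iff its members all contain a common point (Helly's theorem) and every clique has a unique rightmost element $j$, whose $k-1$ partners must lie among the $h_j-1$ other intervals already active when $j$ is born, we get
\[
N_k(\bm G_n)\;=\;\sum_{j=1}^n \binom{h_j-1}{k-1} \;=\; \frac{1}{(k-1)!}\sum_{j=1}^n h_j^{k-1}+O\!\left(\sum_{j=1}^n h_j^{k-2}\right),
\]
where $h_j$ is the height of $D_n$ just after its $j$-th up-step (equivalently, the clique number of the intervals active at that time).

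\textbf{Scaling limit and Riemann summing.} A uniform random connected $\bm G_n$ corresponds (up to a reflection symmetry) to a uniform random positive Dyck path $D_n$ of length $2n$. By Kaigh's theorem, the rescaled path $\widetilde D_n(t):=D_n(\lfloor 2nt\rfloor)/\sqrt{2n}$ converges in distribution in $\mathcal C([0,1])$ to the normalized Brownian excursion $\mathbb e$; by Skorokhod's representation theorem we may assume this is an a.s.~convergence. Using the identity $D_n(s)=2A_n(s)-s$, where $A_n(s)$ is the number of up-steps among the first $s$ steps, the position $U_j$ of the $j$-th up-step satisfies $|U_j/(2n)-j/n|=O((\sup\widetilde D_n)/\sqrt n)=o(1)$ uniformly in $j$, so by uniform continuity of $\mathbb e$,
\[
\frac{h_j}{\sqrt{2n}}\;=\;\widetilde D_n\!\left(\frac{U_j}{2n}\right)\;\longrightarrow\;\mathbb e\!\left(\frac{j}{n}\right)\qquad\text{uniformly in }j\in\{1,\dots,n\}\ \text{a.s.}
\]
Plugging into the combinatorial formula,
\[
\frac{N_k(\bm G_n)}{n^{(k+1)/2}} \;=\; \frac{2^{(k-1)/2}}{(k-1)!}\cdot\frac{1}{n}\sum_{j=1}^n\!\left(\frac{h_j}{\sqrt{2n}}\right)^{\!k-1} + o(1) \;\longrightarrow\; \frac{2^{(k-1)/2}}{(k-1)!}\int_0^1\mathbb e(t)^{k-1}\,dt,
\]
using that the error term is $O(n\cdot(\sqrt n)^{k-2})=O(n^{k/2})$ and that a Riemann sum against a bounded continuous function on $[0,1]$ converges to the integral. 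Since all limits are continuous functionals of the \emph{same} excursion $\mathbb e$, the joint convergence for $2\le k\le K$ follows at once.

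\textbf{Main obstacle.} The genuinely delicate point is upgrading the functional convergence $\widetilde D_n\to\mathbb e$ into convergence of the sum $\frac1n\sum_j(h_j/\sqrt{2n})^{k-1}$. This calls for juggling (i) uniform tightness of $\sup_t\widetilde D_n(t)$ (via standard tail bounds on the excursion maximum), (ii) the uniform convergence $U_j/(2n)\to j/n$ of the rescaled up-step positions, and (iii) the polynomial approximation of $\binom{h_j-1}{k-1}$. Each ingredient is standard, but combining them into a clean uniform-in-$k$ estimate requires some care; the reduction to the connected case must also be quantified, which should proceed along the same lines as the corresponding step in the proof of \cref{thm:unit-interval}.
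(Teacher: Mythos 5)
Your proof is correct and takes essentially the same route as the paper: a combinatorial formula for clique counts in terms of the Dyck-path profile, followed by the functional convergence of the rescaled profile to the excursion, a Riemann-sum argument, and the reduction from a uniform graph to the encoding combinatorial object via the total-variation lemma. The one genuine difference is the combinatorial identity. You group cliques by their \emph{rightmost} vertex and obtain directly $N_k(\bm G_n)=\sum_{j}\binom{h_j-1}{k-1}$, where $h_j$ is the height after the $j$-th up step. The paper (\cref{lem:count-cliques-Gw}) groups by the \emph{leftmost} vertex and gets $N_k=\sum_{i}\binom{f_w(i)}{k-1}$ in terms of $f_w(i)$, the number of later intervals meeting $I_i$; it then must pass from $f_w$ to $h_w$ via the estimate in \cref{corol:HSurF}. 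Your Helly-style argument yields a formula directly in the height variable, which shortcuts that comparison step. In the paper's organization this is not a large saving, since \cref{corol:HSurF} is needed anyway for the Gromov--Prokhorov convergence, but as a self-contained proof of \cref{thm:number-of-cliques} your identity is tidier. Two small points to be careful about when writing this up: (i) the reduction to the connected case and then to the uniform irreducible-Dyck-path model should be carried out exactly as in \cref{lem:dTV-Gw-Cn} and the end of \cref{ssec:interval_removing_irred_condition}, not merely stated as a reflection-symmetry correspondence, since palindromic Dyck paths make the map 1-to-1 rather than 2-to-1; and (ii) one should note explicitly that the contribution of the small components is $O_{\mathbb P}(1)$ (not just $O(n)$), which follows from \cref{lem:Gn-large-components} since a graph on $m$ vertices has at most $\binom{m}{k}$ cliques of size $k$.
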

Observe that the renormalization factor differs from the case of permutation graphs and circle graphs. In particular for $k=2$, \cref{thm:number-of-cliques} says that $\bm G_n$ has typically $\Theta(n^{3/2})$ edges and thus confirms that it is  nondense.

The random variables $(X_k)_k$ have been studied in the probabilistic literature:
we refer to the survey \cite{JansonSurveyBrownianArea} for an extensive study and many bibliographic pointers
regarding $X_1$, to \cite{nguyen2004area} for formulas for the joint Laplace transform and joint moments
of $(X_1,X_2)$ and finally to \cite{richard2009excursion-moments} for the computation of joint moments 
of~$(X_1,\dots,X_K)$ for general $K$.
Unlike in the case of permutation and circle graphs, these limiting
random variables are not deterministic,
\emph{i.e.}~there is no concentration of the number of cliques of size $k$ in $\bm G_n$
around its mean.

\section{Preliminary: convergence of random graphs}\label{sec:Toolboxes}

In this section, we present the two notions of convergence for random graphs used in this paper,
namely graphon convergence and Gromov--Prokhorov convergence.
The necessary material for the proofs of our main theorems is recalled.

\subsection{Dense graph limits and graphons}\label{sec:Graphon}

\medskip

Recall that a function $\phi:[0,1] \to [0,1]$ is Lebesgue-preserving,
if, for any uniform random variable $U$ on $[0,1]$, the variable $\phi(U)$ is
also uniform on $[0,1]$.

\begin{definition}\label{defi:Graphon}
A graphon is an equivalence class of symmetric functions $[0,1]^2 \to [0,1]$,
under the equivalence relation $\sim$, where $w \sim u$ if 
there exists an invertible Lebesgue-preserving function $\phi:[0,1] \to [0,1]$
such that  $w(\phi(x),\phi(y)) = u(x,y)$ for almost every $x,y\in[0,1]$.
\end{definition}

Intuitively, a graphon is a continuous analogue of the adjacency matrix of a graph,
viewed up to relabeling of its continuous vertex set. Finite graphs are naturally embedded into graphons as follows.
\begin{definition}\label{defi:Graphe->Graphon}
	The graphon $W_G$ associated to a labeled graph $G$ with $n$ vertices (labeled from $1$ to $n$) is the equivalence class of the function $w_G:[0,1]^2\to [0,1]$ where
	\[w_G(x,y) = A_{\lceil nx\rceil,\lceil ny\rceil} \in \{0,1\}\]
and $A$ is the adjacency matrix of the graph $G$.
\end{definition}
Since any relabeling of the vertex set of $G$ gives the same graphon $W_G$, 
the above definition immediately extends to unlabeled graphs. 
The space of graphons is endowed with a pseudo-metric $\dbox$ called the {\em cut metric} (see all definitions in \cite[Ch.8]{LovaszBook}). 
Denote by $\SpaceGraphon$ the space of graphons where we identify $W,W'$ whenever  $\dbox(W,W') =0$.
The metric space $(\SpaceGraphon,\dbox)$ is compact \cite[Theorem 9.23]{LovaszBook}.
In the sequel, we think of graphons as elements in $\SpaceGraphon$ and
convergences of graphons are to be understood with respect to the distance $\dbox$. 

Accordingly, given a sequence of graphs $(H_n)_n$, we say that $(H_n)_n$ converges to a graphon $W$ when $(W_{H_n})_n$ converges to $W$. 
\medskip

\subsubsection*{Sampling from graphons and subgraph densities}\label{Sec:SubgraphDensities}

Consider a graphon $W$ and one of its representatives $w:[0,1]^2\to [0,1]$.
Denote by $\Sample_k(W)$ the unlabeled random graph built as follows: 
$\Sample_k(W)$ has vertex set $\{v_1,v_2,\dots,v_k\}$ and,
letting $\vec{X}^k=(X_1,\dots,X_k)$ be i.i.d.~uniform random variables in $[0,1]$,
we connect vertices $v_i$ and $v_j$ with probability $w(X_i,X_j)$
(these events being independent, conditionally on $(X_1,\cdots,X_k)$).

Then the
density of a graph $g$ with $k$ vertices in a graphon $W$ is defined as
\begin{align*}
\density(g,W) &= \mathbb{P}(\Sample_k(W)=g).%\\
%&= \int_{[0,1]^k} \prod_{\{v,v'\} \in E} w(x_v,x_{v'}) \prod_{\{v,v'\} \notin E} (1-w(x_v,x_{v'})) \, \prod_{v \in V} dx_v,
\end{align*}
%where, in the second expression, we choose an arbitrary representative $w$ in $W$.

As briefly mentioned in the introduction, a remarkable aspect of graphon convergence
is that it is equivalent to the convergence of all subgraph densities,
see, \emph{e.g.},~\cite[Chapter 11]{LovaszBook}.

In the present article, we will use the fact that for large $k$ a graphon is well approximated by $\Sample_k(W)$, w.r.t.~the distance $\dbox$. 
More precisely we have the following.

\begin{lemma}\label{lem:EstimeeSample}
For every graphon $W$ and every $\eps>0$, we have
\begin{equation}\label{eq:prob_summable}
  \sum_{k=1}^{+\infty} \mathbb P(\delta_{\Box}(\Sample_k(W),W) \ge \eps) < +\infty.
\end{equation}
Consequently, the sequence of random graphs  $(\Sample_k(W))_k$ converges a.s.~to $W$ in the sense of graphons,
 and any graphon $W$ is uniquely determined by the distribution of its samples.
\end{lemma}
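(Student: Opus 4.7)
The plan is to reduce \eqref{eq:prob_summable} to a quantitative concentration estimate from the graphon literature, then obtain the almost sure convergence and the uniqueness claim as fairly standard corollaries. The main obstacle (which I intend to cite rather than reprove) is the underlying sampling inequality itself, typically established by martingale / bounded-difference concentration.

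More precisely, I would invoke a quantitative sampling lemma, such as Theorem~11.32 in \cite{LovaszBook}, which provides absolute constants $C,c>0$ such that for every graphon $W$ and every $k\ge 2$,
\[
  \mathbb{P}\!\left(\delta_{\Box}(\Sample_k(W),W)\ge \tfrac{C}{\sqrt{\log k}}\right)\le \exp\!\left(-\tfrac{c\,k^2}{\log k}\right).
\]
Given $\eps>0$, choose $k_0$ large enough that $C/\sqrt{\log k_0}<\eps$. For every $k\ge k_0$, the event $\{\delta_{\Box}(\Sample_k(W),W)\ge \eps\}$ is contained in the event above and thus has superexponentially small probability. Summing over $k\ge k_0$ gives a convergent tail, while the finitely many terms with $k<k_0$ contribute a finite amount, which establishes \eqref{eq:prob_summable}.

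The almost sure convergence follows directly from the first Borel--Cantelli lemma: fix a sequence $\eps_m\downarrow 0$; applying \eqref{eq:prob_summable} to each $\eps_m$ shows that almost surely, for every $m$, only finitely many of the events $\{\delta_{\Box}(\Sample_k(W),W)\ge \eps_m\}$ occur. Hence $\delta_{\Box}(\Sample_k(W),W)\to 0$ a.s., i.e.~$\Sample_k(W)\to W$ in the sense of graphons.

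For the uniqueness assertion, suppose two graphons $W_1,W_2$ satisfy $\Sample_k(W_1)\stackrel{d}{=}\Sample_k(W_2)$ for every $k$. Set $G_k:=\Sample_k(W_1)$. By the previous paragraph applied to $W_1$, $\delta_{\Box}(G_k,W_1)\to 0$ a.s., and in particular in probability. Since $\delta_{\Box}(\cdot,W_2)$ is a deterministic function on $\SpaceGraphon$, the random variable $\delta_{\Box}(G_k,W_2)$ has the same distribution as $\delta_{\Box}(\Sample_k(W_2),W_2)$, which also tends to $0$ in probability by the previous paragraph applied to $W_2$. The triangle inequality
\[
  \delta_{\Box}(W_1,W_2)\le \delta_{\Box}(W_1,G_k)+\delta_{\Box}(G_k,W_2)
\]
then has both terms on the right-hand side going to $0$ in probability, while the left-hand side is deterministic; this forces $\delta_{\Box}(W_1,W_2)=0$, i.e.~$W_1=W_2$ in $\SpaceGraphon$.
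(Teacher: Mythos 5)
Your argument follows the same route as the paper's: a quantitative sampling concentration bound from \cite{LovaszBook}, then Borel--Cantelli for a.s.\ convergence, then the triangle inequality for uniqueness. One small slip: the concentration estimate you want is Lemma~10.16 in \cite{LovaszBook}, which gives $\exp\!\left(-k/(2\log k)\right)$, not $\exp\!\left(-ck^2/\log k\right)$ (Proposition~11.32 in that reference is the a.s.\ convergence statement, not the concentration inequality); the weaker, correct rate is still amply summable, so the argument goes through unchanged, and you also make the uniqueness deduction explicit where the paper leaves it as an aside.
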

\begin{proof}
From \cite[Lemma 10.16]{LovaszBook}, there exists a constant $c>0$ such that for every graphon $W$ and every $k\geq 1$,
\begin{equation}\label{eq:distance_sample}
\mathbb P\bigg[\delta_{\Box}(\Sample_k(W),W) \ge \frac{c}{\sqrt{\log k}}\bigg]\leq \exp\left(
\frac{-k}{2\log(k)}\right).
\end{equation}
Using this, for any fixed $\eps>0$, we have, for $k$ large enough,
\[ \mathbb P\big[\delta_{\Box}(\Sample_k(W),W) \ge \eps\big]
\le  \mathbb P\bigg[\delta_{\Box}(\Sample_k(W),W) \ge \frac{c}{\sqrt{\log k}}\bigg]\leq \exp\left(
\frac{-k}{2\log(k)}\right),\]
so that $\mathbb P(\delta_{\Box}(\Sample_k(W),W) \ge \eps)$ is summable in $k$, as claimed.

The almost sure convergence of $(\Sample_k(W))_k$ to $W$ can be found, {\em e.g.}, in 
\cite[Proposition 11.32]{LovaszBook}, but let us explain how to deduce it from \eqref{eq:prob_summable},
since we will reuse this (classical) argument later.
Using Borel--Cantelli lemma, \eqref{eq:prob_summable} implies that $\mathbb P(A_\eps) = 1$,
 where $A_\eps$ is the event \enquote{$\exists K, \forall k \geq K, \delta_{\Box}(\Sample_k(W),W) < \eps$}.
 Taking a countable intersection, we have $\mathbb P( \bigcap_{\eps \in \mathbb Q_+^*} A_\eps) = 1$,
 implying that $\delta_{\Box}(\Sample_k(W),W)$ converges a.s.~to $0$.
\end{proof}

\subsection{Metric measure spaces and the Gromov--Prokhorov topology}\label{sec:DefGP}
We now formally introduce the Gromov--Prokhorov topology used in \cref{thm:unit-interval}.
Note that the material below will only be used in \cref{sec:unit-interval}.

\begin{definition}
A metric measure space (called mm-space for short) is a triple
$(X,d,\mu)$, where $(X,d)$ is a complete and separable metric space 
and $\mu$ a Borel probability measure on $X$.
\end{definition}
A finite connected graph $G$ can be seen as a mm-space $(V_G,d_G,m_{V_G})$, where $V_G$ is the vertex set of the graph, $d_G$ is the graph distance, and $m_{V_G}$ the uniform distribution on $V_G$.\smallskip

We let $\MGP$ be the set of all mm-spaces\footnote{To avoid Russell's paradox, throughout the section,
we actually take the set of mm-spaces {\em whose elements are not themselves metric spaces}.}, modulo the following relation:
$(X,d,\mu) \sim (X',d',\mu')$ if there is an isometric embedding  $\Phi:X \to X'$
such that the pushforward\footnote{Recall that $\Phi_*$ is defined as follows: $\forall A \subseteq X'$, $\Phi_*(\mu)(A)=\mu (\Phi^{-1}(A))$ where $\Phi^{-1}(A) = \{x \in X \mid \Phi(x) \in~A\}$.} measure $\Phi_*(\mu)$ satisfies $\Phi_*(\mu)=\mu'$. 
Note that $\Phi$ does not need to be invertible, so that we need to consider
the closure of that relation by symmetry and transitivity.

On the set $\MGP$, one can define a distance as follows.
First we recall the notion of Prokhorov distance.
For Borel probability measures $\mu$ and $\nu$ on the same metric space $Y$, we set
\[d_P(\mu,\nu)=\inf\big\{\eps>0: \, \mu(A)\le \nu(A^\eps)+\eps \text{ and }\nu(A)\le \mu(A^\eps)+\eps
\text{ for all measurable }A\subseteq Y\big\},\]
where $A^\eps$ is the $\eps$-halo of $A$, \emph{i.e.} the set of all points at distance at most $\eps$ of $A$.
It is well-known that this distance metrizes the weak convergence of probability measures.

Next we define the Gromov--Prokhorov (GP) distance which induces a topology on mm-spaces. 
Given two mm-spaces $(X,d,\mu)$ and  $(X',d',\mu')$, we set
\[\DGP\big( (X,d,\mu), (X',d',\mu') \big)
= \inf_{(Y,d_Y),\Phi,\Phi'}  d_P\big( \Phi_*(\mu), \Phi'_*(\mu') \big),\]
where the infimum is taken over isometric embeddings $\Phi:X \to Y$ and $\Phi':X' \to Y$
into a common metric space $(Y,d_Y)$.
One can prove \cite[Section 5]{GromovProkhorov} that $\DGP$ is a distance on $\MGP$ 
and that the resulting metric space $(\MGP,\DGP)$ is complete and separable.

A nice property (which we will however not use in this paper)
is that the convergence of a sequence of mm-spaces $(X_n,d_n,\mu_n)$ for the $\DGP$ distance is
equivalent to the convergence, for any $k$, of the matrix $(d_n(x_i,x_j))_{1 \le i,j \le k}$
recording the distances between $k$ independent random elements of $X_n$,
having distribution $\mu_n$ (see \cite[Th.5]{GromovProkhorov}  or \cite[Sec.4]{JansonSurveyGromovPorhorov}).

Instead of $\DGP$ we will use in our proof another distance, which has been shown by L\"ohr \cite{loehr2013equivalence} to induce the same topology.

\begin{definition}[Box distance]\label{def:BoxDistance}
The box distance $\Box$ between two mm-spaces is defined as
$$
\Box((X,d,\mu),(X',d',\mu'))=\inf_{(R,\nu)} \ \ \max(\disc(R),1-\nu(R))
$$
where 
\begin{itemize}
\item the infimum is taken over all pairs $(R,\nu)$ where
\begin{itemize}
\item $R$ is a Borel subset of $X \times X'$  
\item $\nu$ is a coupling of $\mu$ and $\mu'$, \emph{i.e.} a Borel measure on $X \times X'$ whose marginals are $\mu$ and $\mu'$;
\end{itemize}
\item $\disc(R)$ is the \emph{discrepancy} defined by 
$$
 \disc(R) = \sup_{(x_1,x'_1),(x_2,x'_2) \in R} 
\left|  d(x_1,x_2) -  d'(x'_1,x'_2) \right|.
$$
\end{itemize}
\end{definition}

As said above, we have the following result.
\begin{theorem}[Corollary 3.2 in \cite{loehr2013equivalence}, see also \cite{JansonSurveyGromovPorhorov}]\label{th:Loehr}
Distances $\Box$ and $\DGP$ induce the same topology on $\MGP$.
\end{theorem}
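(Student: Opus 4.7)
The plan is to prove the stronger fact that the two distances are bi-Lipschitz equivalent, namely $\DGP \le \Box \le 2\,\DGP$, from which topological equivalence is immediate.

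For the bound $\DGP \le \Box$, suppose $(R,\nu)$ achieves $\max(\disc(R),1-\nu(R)) < \eps$. I would glue $X$ and $X'$ into a single Polish space $Y = X \sqcup X'$ by extending $d$ and $d'$ via
\[d_Y(x,x') = \inf\bigl\{d(x,a) + \tfrac{\eps}{2} + d'(b,x') : (a,b) \in R\bigr\} \quad (x \in X,\, x' \in X').\]
The triangle inequality is immediate in the pure cases; in the mixed cases the crucial input is $|d(a_1,a_2) - d'(b_1,b_2)| \le \eps$ for $(a_i,b_i) \in R$, which is absorbed by the built-in $\eps/2 + \eps/2$ slack. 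The inclusions $X,X' \hookrightarrow Y$ are then isometric, and pushing $\nu$ forward to $Y \times Y$ gives a coupling of $\mu$ and $\mu'$. Since every $(a,b) \in R$ satisfies $d_Y(a,b) = \eps/2 < \eps$, a direct estimate on any Borel $A \subseteq Y$ yields $\mu(A) \le \mu'(A^\eps) + \eps$, hence $d_P(\mu,\mu') \le \eps$ and $\DGP \le \eps$.

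For the reverse bound $\Box \le 2\,\DGP$, suppose $\Phi: X \to Y$ and $\Phi': X' \to Y$ are isometric embeddings with $d_P(\Phi_*\mu,\Phi'_*\mu') < \eps$. Strassen's theorem, which applies since $Y$ is Polish, yields a coupling $\pi$ on $Y \times Y$ of $\Phi_*\mu$ and $\Phi'_*\mu'$ with $\pi\{(y,y'): d_Y(y,y') \ge \eps\} \le \eps$. Pulling $\pi$ back through $\Phi \times \Phi'$ defines a coupling $\nu$ of $\mu$ and $\mu'$ on $X \times X'$. Setting $R = \{(x,x') : d_Y(\Phi(x),\Phi'(x')) < \eps\}$, two applications of the triangle inequality in $Y$ give $|d(x_1,x_2) - d'(x'_1,x'_2)| \le 2\eps$ on $R$, so $\disc(R) \le 2\eps$ and $\nu(R) \ge 1-\eps$, whence $\Box \le 2\eps$.

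The main technical obstacle I anticipate is the case analysis needed to verify the triangle inequality for the glued metric $d_Y$: each configuration of endpoints across $X$ and $X'$ must be checked, with the discrepancy bound $\disc(R) \le \eps$ providing the essential slack whenever two cross-space legs appear. Everything else is a careful but standard application of Strassen's theorem plus the definitions; the resulting quantitative bound is actually stronger than the statement, which only claims topological equivalence.
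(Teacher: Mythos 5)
The paper does not prove this statement --- it is quoted directly from L\"ohr \cite{loehr2013equivalence} (Corollary~3.2 there), so there is no in-paper argument to compare against. Your proof is correct, and in fact the quantitative bi-Lipschitz bounds $\DGP \le \Box \le 2\,\DGP$ are essentially the content of L\"ohr's main estimates, of which Corollary~3.2 records the topological consequence; so your route is the standard one. Both directions check out: gluing $X \sqcup X'$ along $R$ with an $\eps/2$ bridge yields a genuine complete separable metric (cross distances are bounded below by $\eps/2$, so no points get identified, and a Cauchy sequence is eventually confined to one side), the discrepancy bound $\disc(R)\le\eps$ is exactly absorbed by the $\eps/2+\eps/2$ slack in the mixed triangle-inequality cases, and since $d_Y(a,b)=\eps/2$ for $(a,b)\in R$ while $1-\nu(R)<\eps$, the Prokhorov estimate $\mu(A)\le\mu'(A^{\eps/2})+\eps\le\mu'(A^{\eps})+\eps$ follows. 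For the reverse direction, the Strassen step is legitimate provided one notes that in the $\DGP$ infimum the ambient $Y$ can without loss be replaced by the Polish closure of $\Phi(X)\cup\Phi'(X')$, and that $\Phi(X)$, $\Phi'(X')$ are then Borel in $Y$ (Lusin--Souslin, using that mm-spaces are Polish by the paper's definition), so the coupling $\pi$ is concentrated on $\Phi(X)\times\Phi'(X')$ and pulls back to a bona fide coupling $\nu$ on $X\times X'$. The one edge case you leave implicit, $R=\emptyset$ in the first direction, only arises when $\Box\ge 1$, where $\DGP\le\Box$ holds trivially because the Prokhorov distance never exceeds~$1$.
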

% "good" ajouté dans la phrase ci-dessous car si on veut juste une borne, pas besoin que $\nu(R)$ soit "large".
Finding a good upper bound on the distance $\Box$ requires to construct a 
pair $(R,\nu)$ with $\nu(R)$ large and a small discrepency.
This is often easier than constructing isometric embedding $\Phi$ and $\Phi'$
of $X$ and $X'$ into a common metric space $Y$, as required to find an upper bound
for $\DGP$. This explains that \cref{th:Loehr} is often useful to prove 
GP convergence, and our proof of \cref{thm:unit-interval} follows this path.

\section{Permutation graphs}
\label{sec:permutation}

The main goal of this section is to prove Theorem~\ref{Th:ConvUnifPermutationGraph}: uniform random permutation graphs $(\bm G_n)$ converge to the graphon $\Wperm$ defined in \cref{def:Winv} and \cref{prop:cv_Gsigma} below. 

Our proof starts by observing that the inversion graph of a uniform random permutation (which is \emph{not} a uniform random permutation graph) converges to $\Wperm$ (see again \cref{prop:cv_Gsigma}).
In order to transfer this result onto uniform permutation graphs, we will go through \emph{modular-prime} permutation graphs (introduced in \cref{Sec:ModularPrime} below). Indeed,  one can control the number of realizers of a modular-prime permutation graph (\cref{prop:nb-antecedents-simple}). We combine all this in \cref{ssec:mainproofperm} to prove Theorem~\ref{Th:ConvUnifPermutationGraph}.
Finally, \cref{ssec:nb-clique-perm} deduces from Theorem~\ref{Th:ConvUnifPermutationGraph} an asymptotic estimate of
the number of cliques of size $k$ in a uniform permutation graph.

\subsection{Graphon limit of the inversion graph of a uniform random permutation}

For any $n$, let $\sigma_n$ denote a uniform random permutation of size $n$. 
In this section we determine the limit in the sense of graphons of its inversion
graph $G_{\sigma_n}$. 

\begin{definition}
\label{def:Winv}
Let 
$$
\begin{array}{r c c c}
\Psi: & [0,1] & \to & [0,1]^2\\
        & x   & \mapsto & (\Psi_1(x),\Psi_2(x))
\end{array}
$$
be any Lebesgue-preserving measurable function, meaning $\Psi_*(\mathrm{Lebesgue}_{[0,1]})=\mathrm{Lebesgue}_{[0,1]^2}$~.
The graphon $\Wperm_\Psi$ is defined as (the equivalence class of) 
$$
\Wperm_\Psi(x,y)=\mathbb{1}_{(\Psi_1(x)-\Psi_1(y))(\Psi_2(x)-\Psi_2(y))<0} \qquad \text{ for all }x,y\in [0,1].
$$
\end{definition}
In words, $\Wperm_\Psi$ takes values in $\{0,1\}$ and is such that $\Wperm_\Psi(x,y)=1$ exactly when the two points $\Psi(x),\Psi(y)$ form an inversion in the unit square, \emph{i.e.}~when one of the two points is at the bottom right of the other.

 \begin{proposition}
  \label{prop:cv_Gsigma}
  The equivalence class of $\Wperm_\Psi$ is independent of the choice of
the Lebesgue-preserving function $\Psi$.
Moreover, let us consider, for every $n\geq 1$,
   a uniform random permutation   $\sigma_n$ of size $n$. Then
$$
G_{\sigma_n} \stackrel{\text{a.s.}}{\to} \Wperm,
$$
where $\Wperm:=\Wperm_\Psi$ for an arbitrary  Lebesgue-preserving function $\Psi$.
\end{proposition}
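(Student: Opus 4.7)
For the first assertion, I would invoke the uniqueness statement in \cref{lem:EstimeeSample}: a graphon is determined by the distribution of its $k$-samples. Fix $k$ and let $X_1,\dots,X_k$ be i.i.d.~uniform on $[0,1]$. Since any admissible $\Psi$ is Lebesgue-preserving, the tuple $(\Psi(X_1),\dots,\Psi(X_k))$ consists of $k$ i.i.d.~uniform points in $[0,1]^2$, and $\Sample_k(\Wperm_\Psi)$ is, by construction, the graph on $\{v_1,\dots,v_k\}$ in which $v_iv_j$ is an edge iff the points $\Psi(X_i),\Psi(X_j)$ form an inversion in the unit square. Its distribution therefore depends only on $k$, not on $\Psi$, and so all $\Wperm_\Psi$ coincide in $\SpaceGraphon$.

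For the convergence, the plan is to couple $G_{\sigma_n}$ and $\Sample_n(\Wperm)$ so that they coincide as unlabeled graphs. Let $(X_i)_{i\ge 1}$ be an i.i.d.~sequence of uniform variables in $[0,1]$, fix any Lebesgue-preserving $\Psi$, and set $P_i := \Psi(X_i) = (A_i,B_i)$, giving an i.i.d.~sequence of uniform points in $[0,1]^2$. Almost surely all $A_i$'s are distinct; sorting $(P_1,\dots,P_n)$ by first coordinate and reading off the ranks of the second coordinates yields a uniform random permutation $\sigma_n$ of size $n$ (conditionally on the sorting permutation of the $A_i$'s, the $B$-ranks are uniform by exchangeability of i.i.d.~continuous variables). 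Up to the relabeling of vertices that performs this sort, the labeled graph $G^\ell_{\sigma_n}$ is exactly the sample graph $\Sample_n(\Wperm)$: in both, an edge is present precisely when the underlying pair of points in $[0,1]^2$ forms an inversion. Hence $G_{\sigma_n} = \Sample_n(\Wperm)$ pointwise as unlabeled graphs.

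The conclusion is then immediate from \cref{lem:EstimeeSample}, which gives $\Sample_n(\Wperm) \to \Wperm$ a.s.~in $\SpaceGraphon$; under the coupling above, $G_{\sigma_n}$ converges to $\Wperm$ a.s.~as well. The only things to verify are routine: that the constructed $\sigma_n$ is indeed uniform on the set of permutations of size $n$, and that the edge-by-edge identification between the two labeled graphs (after sorting by first coordinate) is correct. I expect no substantive obstacle; the genuine content of the argument is the observation that the definition of $\Sample_n(\Wperm)$ is itself the inversion graph of $n$ i.i.d.~uniform points in the square, which is \emph{tautologically} the distribution of $G_{\sigma_n}$ for a uniform $\sigma_n$.
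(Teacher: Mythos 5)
Your proof is correct and follows essentially the same route as the paper's. Both arguments turn on the same identification: sorting the i.i.d.\ uniform points $\Psi(X_1),\dots,\Psi(X_n)$ by first coordinate and reading off the ranks of second coordinates produces a uniform permutation whose inversion graph is exactly $\Sample_n(\Wperm_\Psi)$, which gives both the $\Psi$-independence (via uniqueness of samples, \cref{lem:EstimeeSample}) and the identification of laws. The only presentational difference is at the end: you phrase the convergence step as a pointwise coupling $G_{\sigma_n}=\Sample_n(\Wperm)$ and then invoke the a.s.\ convergence of samples, whereas the paper records the equality in distribution $\Sample_n(\Wperm)\stackrel{(d)}{=}G_{\sigma_n}$ and applies Borel--Cantelli directly to the summable marginal probabilities $\mathbb P(\delta_\Box(G_{\sigma_n},\Wperm)\ge\eps)$; these are interchangeable here, since \cref{lem:EstimeeSample} itself rests on that same Borel--Cantelli argument.
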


\begin{proof}
We first identify for every $n$ the distribution of  $\mathrm{Sample}_n(\Wperm_\Psi)$.
By definition,
it is constructed by taking $X_1$, \dots, $X_n$ independently and uniformly in $[0,1]$, 
and by connecting $v_i$ and $v_j$ if and only if $\Wperm_\Psi(X_i,X_j)=1$ 
(recall that $\Wperm_\Psi$ is $\{0,1\}$-valued).
Let $(a_i,b_i)=\Psi(X_i)$. Since $\Psi$ is Lebesgue-preserving,
the $n$ points $(a_1,b_1),\dots,(a_n,b_n)$ are i.i.d.~uniform points in $[0,1]^2$.
Up to relabeling simultaneously $(X_i)_{i\le n}$, $(a_i)_{i\le n}$ and $(b_i)_{i\le n}$,
 we assume $a_1<\dots<a_n$.
Then there exists a unique permutation $\pi$ such that
\[ b_{\pi^{-1}(1)} < \dots < b_{\pi^{-1}(n)}. \]
The permutation $\pi$ is a uniform permutation of size $n$. 
Then we have
\[ \Wperm_\Psi(X_i,X_j)=1 \Longleftrightarrow (a_i-a_j)(b_i-b_j)<0 \Longleftrightarrow (i-j)(\pi(i)-\pi(j) )<0, \]
\emph{i.e.} $(i,j)$ is an inversion of $\pi$.
Thus $\mathrm{Sample}_n(\Wperm_\Psi)$ is the inversion graph of $\pi$.
Since $\pi$ is uniform, 
\begin{equation}\label{eq:LoiSample_Wperm}
\mathrm{Sample}_n(\Wperm_\Psi)=G_{\pi}
\stackrel{\text{(d)}}{=}G_{\sigma_n}.
\end{equation}
In particular, we see that, for any $n \ge 1$, the distribution of $\mathrm{Sample}_n(\Wperm_\Psi)$ is independent of $\Psi$. 
Since a graphon is determined by the distribution of its samples,
$\Wperm_\Psi$ is indeed independent of $\Psi$, as claimed.

Using \cref{eq:LoiSample_Wperm}, we get that for every fixed $\eps>0$ 
$$
\mathbb{P}(\delta_{\Box}(G_{\sigma_n},\Wperm) \ge \eps)=\mathbb{P}(\delta_{\Box}(\mathrm{Sample}_n(\Wperm),\Wperm) \ge \eps),
$$
and the right-hand side is summable by \cref{lem:EstimeeSample}. 
Using the Borel--Cantelli Lemma as in the proof of \cref{lem:EstimeeSample}, we get that $G_{\sigma_n}\stackrel{\text{a.s.}}{\to} \Wperm$, concluding the proof.
\end{proof}

\begin{remark}
\label{rmk:graphons-on-general-S}
One can more generally define graphons as (equivalence classes)
of measurable functions $\mathcal S \times \mathcal S \to [0,1]$, where $\mathcal S$
is any probability space, see, e.g., \cite[Chapter 13]{LovaszBook}.
With this convention, the graphon $\Wperm$ has a simple representative $W$,
using $\mathcal S=[0,1]^2$, namely
\[W((x,x'),(y,y')) =\mathbb{1}_{(x -y )(x'-y')<0}.\]
A similar remark holds for the limit $\Wcircle$ 
of circle graphs defined later in \cref{def:Wcircle}.
\end{remark}

\subsection{Modular-prime permutation graphs, simple permutations and number of realizers}
\label{Sec:ModularPrime}

The random permutation graph $G_{\sigma_n}$ is not a uniform random graph taken among all permutation graphs with $n$ vertices, since some permutation
graphs have more permutations realizing them than others.
Our next goal is to transfer the convergence result for $G_{\sigma_n}$ (\cref{prop:cv_Gsigma})
to a uniform random permutation graph $\bm G_n$ on $n$ vertices.
To do that, we use the notion of modular-prime graphs,
and show that the number of realizers is well-controlled for these graphs.
\medskip

\begin{definition}
A module $M$ in a graph $G$ is a subset of vertices of $G$ 
such that for all $m$, $m'$ in $M$, and $u$ not in $M$
we have that, either both $\{m,u\}$ and $\{m',u\}$ are edges of $G$, or none of them is.

A graph $G$ is called modular-prime if it contains no nontrivial modules,
\emph{i.e.}~no modules other than $\emptyset$, $V_G$ and the singletons $\{v\}$ (for $v\in V_G$).
\end{definition}
 There exists a corresponding notion for permutations, introduced
 by Albert and Atkinson in \cite{AA05}.
 We use the standard notation $[n]:=\{1,\dots,n\}$.
 \begin{definition}
 An interval $I$ in a permutation $\sigma$ is a set of contiguous indices $I$,
 whose image $\sigma(I)$ by $\sigma$ is also contiguous.
 
 A permutation $\sigma$ of size $n$ is called {\em simple} if it has no nontrivial intervals,
\emph{i.e.}~no intervals other than $\emptyset$, $[n]$ and the singletons $\{i\}$ (for $i \in [n]$).
 \end{definition}
 
For example, $I=\set{3,4,5}$ and $I'=\set{6,7}$ are a nontrivial intervals of the permutation $\sigma$ defined by
$$
\begin{array}{r c c c c c c c}
i: & 1 & 2& 3 &4 &5 &6 &7\\ 
\sigma(i): & 7 & 1& 4&6 &5 &2 &3
\end{array} 
 $$

It is easily seen that if $I$ is an interval in $\sigma$,
then the corresponding vertices form a module in $G_\sigma$. % the converse is also true.
The converse is not true in general but it holds that $G_\sigma$ is modular-prime if and only if $\sigma$ is simple. (This observation is due to F.\ de Montgolfier \cite{Montgolfier}, see also \cite[Lemma 20]{HabibPaul}.) 

Moreover, we have the following remarkable property.
\begin{proposition}\label{prop:nb-antecedents-simple}
For any modular-prime permutation graph $G$, 
there are at most $4$ permutations $\tau$, all simple, such that $G=G_\tau$.
\end{proposition}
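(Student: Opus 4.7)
The plan is to rely on Gallai's theorem on transitive orientations of prime comparability graphs. By the correspondence between simple permutations and modular-prime permutation graphs recalled just above the statement, any realizer of $G$ must automatically be simple; only the bound of $4$ on the number of realizers remains to be established.

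I would first exhibit four natural realizers. If $\sigma$ realizes $G$, then so do $\sigma^{-1}$, $\sigma^{rc}$ (reverse-complement), and $(\sigma^{-1})^{rc}$. Geometrically, each permutation corresponds to a set of $n$ points in $[n] \times [n]$, and the inversion graph records which pairs lie in an \enquote{NW--SE} configuration. Among the eight isometries of the square, exactly four preserve this incidence relation (the identity, the $180^\circ$ rotation, and the two diagonal reflections), and these act on permutations precisely as $\sigma \mapsto \sigma,\, \sigma^{rc},\, \sigma^{-1},\, (\sigma^{-1})^{rc}$; the other four swap NW--SE with NE--SW and hence replace the inversion graph by its complement. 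Each of the four retained operations preserves the intervals of a permutation, so simplicity is inherited.

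To see that no other realizer exists, I would use the following structural observation. For any permutation $\tau$, the natural order on $[n]$ restricts to a transitive orientation of the inversion graph $G_\tau$ (orient each edge from smaller to larger index) and likewise to a transitive orientation of $\overline{G_\tau}$; conversely $\tau$ is recovered from this pair, since the value-order of $\tau$ equals the linear order obtained by reversing the orientation on $G_\tau$ while keeping the one on $\overline{G_\tau}$. Transported along any isomorphism $G_\tau \cong G$, a realizer therefore determines a pair of transitive orientations, one of $G$ and one of $\overline G$, which glue into a linear order on $V(G)$.

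Since modules are invariant under complementation, $\overline G$ is also modular-prime. By Gallai's theorem, a modular-prime comparability graph admits exactly two transitive orientations, opposite to one another; hence each of $G$ and $\overline G$ has two such orientations, for at most $2 \times 2 = 4$ compatible pairs and therefore at most $4$ realizers. The main delicate point will be the bookkeeping around the choice of isomorphism $G_\tau \cong G$, which is defined only up to $\mathrm{Aut}(G)$: this is resolved by noting that the four explicit symmetries exhibited above already account for all the orientation pairs arising from a fixed $\sigma$, so they exhaust the realizers of $G$.
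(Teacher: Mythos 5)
Your approach is essentially the same as the paper's: both rest on the fact (Golumbic, Cor.\ 5.13, i.e.\ Gallai's theorem) that a modular-prime comparability graph has exactly two transitive orientations, applied to $G$ and to $\bar G$ (both modular-prime), yielding $2 \times 2 = 4$ pairs and hence at most $4$ realizers. The paper sidesteps the bookkeeping subtlety you flag about the choice of isomorphism $G_\tau \cong G$ by arguing that the map sending an orientation pair $(F,\bar F)$ to a permutation (from the proof of Golumbic, Thm.\ 7.1) is \emph{surjective} onto the set of realizers, so the $4$-element domain alone gives the bound without needing to check that the four dihedral symmetries of a single $\sigma$ exhaust all orientation pairs.
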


This result is not explicitly stated in the literature,
but follows easily combining various results on  comparability and permutation graphs, 
all recalled in~\cite{Golumbic}. 
In the remaining part of this section,
we explain how results in~\cite{Golumbic} imply \cref{prop:nb-antecedents-simple}.
We also refer to \cite{crespelle2010dynamic} for a general discussion
on how to construct the set of realizers of a permutation graph
 using its modular decomposition.

We first state a useful characterization of permutation graphs given in~\cite{Golumbic}.
For this, recall that the complement of a graph $G=(V,E)$ is $\bar{G} = (V,\bar{E})$ where $\{u,v\} \in \bar{E}$ if and only if $u \neq v$ and $\{u,v\} \notin E$.  
Recall also that a graph $G=(V,E)$ is  a comparability graph if and only if 
there exists a partial order $\prec$ on $V$ such that $\{u,v\}$ is an edge of $G$ if and only if $u \prec v$ or  $v \prec u$. 
Equivalently, a graph $G=(V,E)$ is  a comparability graph
if its edges admit a transitive orientation.
It is known~\cite[Theorem 7.1]{Golumbic}
that a graph $G$ is a permutation graph if and only if 
$G$ and $\bar{G}$ are comparability graphs.

In addition, in the proof of~\cite[Theorem 7.1]{Golumbic}, 
it is shown that, from each pair $(F,\bar{F})$ of
transitive orientations of $G$ and $\bar{G}$,
we can build a realizer $\pi$ of $G$.
%Moreover, it easy to see that any realizer $\pi$ can be obtained in this way:
%simply build $F$ (resp.~$\bar{F}$) by orienting
%any edge $\{v_i,v_j\}$ in $G$ (resp. in~$\bar{G}$) from $v_i$ to $v_j$ if $i<j$
%and $v_i$, $v_j$ correspond to the elements $i$, $j$
%in the identification $G=G_\pi$.

Moreover, it easy to see that any realizer $\pi$ can be obtained in this way.
Indeed, let $G$ be a permutation graph, and $\pi$ be a realizer of $G$.
Then there is a labeling $v_1, ... ,v_n$ of the vertices of $G$ such that
$\{v_i, v_j\} \in V_G \Leftrightarrow \left(\pi(i)-\pi(j) \right)(i-j)<0$.
Hence $\{v_i, v_j\} \in V_{\bar{G}} \Leftrightarrow \left(\pi(i)-\pi(j) \right)(i-j)>0$.
We build the orientation $F$ (resp.~$\bar{F}$) by orienting
any edge $\{v_i,v_j\}$ in $G$ (resp. in~$\bar{G}$) from $v_i$ to $v_j$ if and only if $i<j$.
Then it is straightforward to check that $F$ (resp.~$\bar{F}$) is transitive and that this construction is the inverse of the one in the proof of \cite[Theorem 7.1]{Golumbic}.

Finally, 
it is known \cite[Corollary 5.13]{Golumbic} that a modular-prime comparability graph has exactly two transitive orientations, one being the inverse of the other. 

\begin{proof}[Proof of  \cref{prop:nb-antecedents-simple}]
Let $G$ be a  modular-prime permutation graph. Then $\bar{G}$ is also modular-prime.
By \cite[Corollary 5.13]{Golumbic}, both $G$  and $\bar{G}$ have exactly two transitive orientations.
Since realizers of $G$ are built from pairs $(F,\bar{F})$ of 
transitive orientations of $G$ and $\bar{G}$, the
graph $G$ has at most 4 realizers.
 \end{proof}
 
\begin{remark}
  We note that a modular-prime permutation graph may have less than 4 realizers
 since different pairs $(F,\bar{F})$ may yield the same realizer $\pi$ of $G$. This happens in fact when some/all realizers $\pi$ of $G$ has/have some dihedral symmetry.
\end{remark}

\subsection{Limit of a uniform permutation graph}
\label{ssec:mainproofperm}

In this section, we prove Theorem~\ref{Th:ConvUnifPermutationGraph}, which states that the sequence of uniform random permutation graphs $(\bm G_n)_n$ converges almost surely
to $\Wperm$  in the space of graphons.

%\end{theorem}
\begin{proof}[Proof of Theorem~\ref{Th:ConvUnifPermutationGraph}]
  We denote by $\mathcal G^{\text{perm}}_n$ the set of permutation graphs with $n$ vertices.
  Let $\eps>0$. We have
  \[\mathbb P\big[ \delta_\Box(\bm G_n, \Wperm) \geq \eps \big]
  =\frac{\# \left\{ G \in \mathcal G^{\text{perm}}_n: \delta_\Box(G, \Wperm) \geq \eps \right\} }
  {\#\, \mathcal G^{\text{perm}}_n }.\]
  By definition, a permutation graph $G$ with $n$ vertices is equal to $G_\sigma$ for at least one permutation $\sigma$.
  Hence, the numerator can be bounded by
  \[ \# \left\{ G \in \mathcal G^{\text{perm}}_n: \delta_\Box(G, \Wperm) \geq \eps \right\} 
  \le \#  \left\{ \sigma \in S_n: \delta_\Box(G_\sigma, \Wperm) \geq \eps \right\} ,\]
  where $S_n$ denotes, as usual, the set of permutations of size 
  $n$.
  On the other hand using that modular-prime permutation graphs $G$ write as $G_\sigma$ for at most $4$ permutations,
  all simple (\cref{prop:nb-antecedents-simple}), we have
  \[\#\,  \mathcal G^{\text{perm}}_n 
  \ge \#  \big\{ G \in \mathcal G^{\text{perm}}_n : G \text{ modular-prime} \} \big\}
  \ge \frac{1}{4}\, \# \left\{ \sigma \in S_n: \sigma \text{ simple }   \right\}.\]
From~\cite{AAK03}, we know that, as $n \to \infty$, the number of simple permutations
is asymptotically $e^{-2}\, n!$. Thus, for $n$ large enough, using $4e^2<30$, we get
$\#\,  \mathcal G^{\text{perm}}_n               
\ge \frac{n!}{30}$.
Bringing everything together we have, for $n$ large enough
\begin{multline*}
  \mathbb P\big[ \delta_\Box(\bm G_n, \Wperm) \geq \eps \big]   
\le \frac{30}{n!}\, \# \big\{ \sigma \in S_n: \delta_\Box(G_\sigma, \Wperm) \geq \eps \big\}  \\
= 30 \, \mathbb P\big[  \delta_\Box(G_{\sigma_n}, \Wperm) \geq \eps \big],
\end{multline*}
where, in the last equation, $\sigma_n$ is a uniform random permutation of size $n$.
As in the proof of \cref{prop:cv_Gsigma}, we know that the upper bound in the above equation is summable as $n$ tends to $+\infty$.
Since this holds for any $\eps>0$, we have proved the theorem, again using the Borel--Cantelli Lemma.
\end{proof}

\subsection{Clique density in $\Wperm$}\label{ssec:nb-clique-perm}

\begin{proposition}\label{prop:MarginalesWinv}
Denote by $K_k$ the clique of size $k$. For every $k\geq 1$, 
$$
\dens{K_k}{\Wperm}=\frac{1}{k!}.
$$
%In particular the density of edges  $\dens{K_2}{\Wperm}$ equals $1/2$ and the density of triangles $\dens{K_3}{\Wperm}$ equals $1/6$.
\end{proposition}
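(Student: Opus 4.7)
The plan is to exploit the identification of $\mathrm{Sample}_k(\Wperm)$ established in \eqref{eq:LoiSample_Wperm}. By the definition of subgraph density in a graphon recalled in \cref{Sec:SubgraphDensities}, we have
\[ \dens{K_k}{\Wperm} = \mathbb{P}(\mathrm{Sample}_k(\Wperm) = K_k). \]
From \eqref{eq:LoiSample_Wperm} in the proof of \cref{prop:cv_Gsigma}, $\mathrm{Sample}_k(\Wperm)$ has the same distribution as $G_{\sigma_k}$, the inversion graph of a uniform random permutation $\sigma_k$ of size $k$. So I would reduce the question to computing $\mathbb{P}(G_{\sigma_k} = K_k)$.

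Next I would observe that $G_{\sigma_k} = K_k$ exactly when every pair of distinct indices $\{i,j\} \subseteq [k]$ forms an inversion of $\sigma_k$. The only permutation for which every pair of indices is an inversion is the decreasing permutation $k\, (k-1)\, \cdots\, 1$. Since $\sigma_k$ is uniform on $S_k$, the probability that $\sigma_k$ equals this specific permutation is $1/k!$.

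Combining these two steps gives
\[ \dens{K_k}{\Wperm} = \mathbb{P}(\sigma_k = k\,(k-1)\cdots 1) = \frac{1}{k!}, \]
as required. There is no real obstacle here; the work is entirely done by the sampling identity \eqref{eq:LoiSample_Wperm}, which reduces a graphon computation to an elementary counting statement about uniform random permutations.
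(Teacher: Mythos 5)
Your proof is correct and matches the paper's argument essentially verbatim: both reduce via the sampling identity \eqref{eq:LoiSample_Wperm} to the observation that $G_{\sigma_k}=K_k$ forces $\sigma_k$ to be the decreasing permutation, which has probability $1/k!$ under the uniform distribution on $S_k$.
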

Consequently, for every $k\geq 1$,
$$
\frac{1}{\binom{n}{k}}\ \#\set{\text{cliques of size $k$ in }\bm G_n }\ \to\ \frac{1}{k!}\ \text{ a.s.}.
$$
\begin{proof}
By definition, $\dens{K_k}{\Wperm}$ is the probability that $\Sample_k(\Wperm)$
is a clique $K_k$. Recall from \eqref{eq:LoiSample_Wperm} p.\pageref{eq:LoiSample_Wperm}
that  $\Sample_k(\Wperm)$ is distributed as 
the inversion graph $G_{\pi}$ of a uniform random permutation $\pi$ of size $k$. Moreover, $G_{\pi}=K_k$ if and only if $\pi$ is the decreasing permutation $d_k :=k \ (k\! -\! 1) \cdots 1$.
Summing up,
\[ \dens{K_k}{\Wperm} =\mathbb P\big(\Sample_k(\Wperm)=K_k\big)
=\mathbb P(G_{\pi}=K_k)= \mathbb P(\pi=d_k) = \frac1{k!}.
\qedhere \]
\end{proof}

\section{Circle graphs}
\label{sec:circle}

The main goal of this section is to prove \cref{Th:ConvUnifCircleGraph}: a sequence of uniform random circle graphs converges to the graphon $\Wcircle$ defined below in \cref{def:Wcircle} and \cref{prop:gcross}. 

The strategy of the proof is similar to the one used in the previous section for permutation graphs. 
We start by observing that the intersection graph of a uniform random matching (which is a \emph{non}uniform circle graph) converges to $\Wcircle$ (see \cref{prop:gcross}). 
Then, in order to transfer this result to uniform circle graphs, we will go through \emph{split-prime} circle graphs (a notion reviewed in \cref{ssec:indec_matching} below). Indeed, one can control the number of realizers of a split-prime circle graph (see \cref{coro:1a4n-matching-pr-premier}). 

A noticeable difference with the previous section comes from the enumeration of combinatorial objects corresponding to a prime graph. 
Indeed, in our proof for permutation graphs, we used previously known results on the enumeration of simple permutations (which correspond to modular-prime permutation graphs). 
However, for circle graphs, we need to define (in \cref{def:decomposable_matching} below) 
the analogous notion of \emph{indecomposable} matchings 
(which correspond to split-prime circle graphs), 
and then to estimate the number of these indecomposable matchings.
This additional step of the proof is dealt with in \cref{ssec:indec_matching}. 

\subsection{Limit of the intersection graph of a uniform matching}
We need to introduce some combinatorial objects and a bit of notation.

We define a matching of size $n$ as a fixed-point free involution on the set  $[2n]$. 
(They are sometimes called \emph{perfect matchings} or \emph{chord diagrams} in the literature.) 
Denote by $\mathcal{C}$ the unit circle centered at the origin, and set $\omega_{n}=e^{2\mathbf{i}\pi/(2n)}$.
For $\match$ a matching of size $n$, the circular representation of $\match$, denoted $\Circle (\match)$, is the chord configuration of $\mathcal{C}$ in which we put the $n$ chords\footnote{Chords are intended as straight lines, but for better readability we draw them curvy in pictures, being careful not to introduce unnecessary intersections.} of the form
$$
(\omega_n)^i \leftrightarrow (\omega_n)^{\match(i)}.
$$
By abuse of notation, we often identify a matching and its circular representation. The size of a matching is then its number of chords. 

As explained in the introduction, a graph $G$ with $n$ vertices is a \emph{circle graph} if $G$ is the (unlabeled) intersection graph 
 of $\Circle(\match)$ for a certain matching $\match$ of size $n$.
 Then $\match$ is called a \emph{realizer} of $G$
 and we write $G=G_\match$.
 
For any $n\geq 1$ let $\mathcal{M}_n$ be the set of matchings of size $n$. It will be useful for later purposes to observe that
\begin{equation}\label{eq_nb_matchings}
m_n:=\#\mathcal{M}_n = (2n-1)!! = \frac{(2n)!}{2^{n}\,n!}\stackrel{n\to +\infty}{\sim} \sqrt{2}(2n/e)^n.
\end{equation}

\medskip

Let $M_n$ be a uniform element in $\mathcal{M}_n$.
In this subsection we compute the graphon limit of $G_{M_n}$.
We first need the notion of crossing of two pairs of reals.
Let $x_A,x_B,y_A,y_B$ be four reals in $[0,1]$, identified to the points $e^{2\mathbf{i}\pi x_A}$, $e^{2\mathbf{i}\pi x_B}$, $e^{2\mathbf{i}\pi y_A}$ and $e^{2\mathbf{i}\pi y_B}$ on the unit circle. We say that $(x_A,x_B)$ and $(y_A,y_B)$ are \emph{crossing} if $x_A,x_B,y_A,y_B$ are pairwise distinct and if the chords $e^{2\mathbf{i}\pi x_A}\leftrightarrow e^{2\mathbf{i}\pi x_B}$ and $e^{2\mathbf{i}\pi y_A}\leftrightarrow  e^{2\mathbf{i}\pi y_B}$ intersect (\emph{i.e.}~$x$'s and $y$'s alternate in the circular order).

\begin{definition}
\label{def:Wcircle}
Let
$$
\begin{array}{r c c c}
\Psi: & [0,1] & \to & [0,1]^2\\
        & x   & \mapsto & (\Psi_A(x),\Psi_B(x))
\end{array}
$$
be any measurable function which is Lebesgue-preserving. The graphon $\Wcircle_\Psi$ is defined by (the equivalence class of)
$$
\Wcircle_\Psi(x,y)=\mathbb{1}\left[(\Psi_A(x),\Psi_B(x)) \text{ and }(\Psi_A(y),\Psi_B(y))\text{ are crossing}\right] \quad \text{ for all }x,y\in [0,1].
$$
\end{definition}
As for $\Wperm$, we can avoid the use of a Lebesgue-preserving function $\Psi$
by using a more general formalism for graphons, see \cref{rmk:graphons-on-general-S}. 

\begin{proposition}\label{prop:gcross}
The equivalence class of $\Wcircle_\Psi$ is independent
of the choice of the Lebesgue-preserving function $\Psi$. 
Moreover, let us consider, for each $n$, 
a uniform random matching $M_n$ of size $n$.
Then its  intersection graph $G_{M_n}$ satisfies
$$
G_{M_n} \stackrel{\text{a.s.}}{\to} \Wcircle,
$$
where $\Wcircle:=\Wcircle_\Psi$ for an arbitrary Lebesgue-preserving function $\Psi$.
\end{proposition}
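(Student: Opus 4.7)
The plan is to follow closely the strategy used for permutations in \cref{prop:cv_Gsigma}. First I would identify the distribution of $\mathrm{Sample}_n(\Wcircle_\Psi)$ and show that it coincides with the distribution of $G_{M_n}$. By definition, this sample is built by drawing $X_1,\dots,X_n$ i.i.d.~uniform on $[0,1]$ and connecting $v_i$ to $v_j$ whenever $\Wcircle_\Psi(X_i,X_j)=1$ (recall $\Wcircle_\Psi$ is $\{0,1\}$-valued). Setting $(a_i,b_i)=\Psi(X_i)$, the Lebesgue-preserving property of $\Psi$ ensures that the pairs $(a_1,b_1),\dots,(a_n,b_n)$ are i.i.d.~uniform in $[0,1]^2$; consequently, the $2n$ reals $a_1,b_1,\dots,a_n,b_n$ are i.i.d.~uniform in $[0,1]$. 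Identifying each $t\in[0,1]$ with $e^{2\mathbf{i}\pi t}\in \mathcal C$, we obtain $2n$ almost surely distinct uniform points on $\mathcal C$, pre-paired by the relation \enquote{sharing the index $i$}.

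Next I would check that the combinatorial matching $M\in\mathcal M_n$ obtained by sorting these $2n$ points around $\mathcal C$ and reading off their pairing is uniform on $\mathcal M_n$. This is a symmetry argument: the joint distribution of the $n$ pairs is invariant under permuting the pairs and under swapping $a_i\leftrightarrow b_i$ within any subset of pairs; since the multiset of $2n$ positions and the residual combinatorial matching are independent after this conditioning, all $(2n-1)!!$ pairings are equally likely. By the very definition of the crossing relation, $\Wcircle_\Psi(X_i,X_j)=1$ if and only if the chords $e^{2\mathbf{i}\pi a_i}\leftrightarrow e^{2\mathbf{i}\pi b_i}$ and $e^{2\mathbf{i}\pi a_j}\leftrightarrow e^{2\mathbf{i}\pi b_j}$ intersect, which is precisely the edge relation in $G_M$. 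Thus
\[
  \mathrm{Sample}_n(\Wcircle_\Psi)\stackrel{(d)}{=}G_M\stackrel{(d)}{=}G_{M_n}.
\]

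From this identity in distribution, the independence of $\Wcircle_\Psi$ from $\Psi$ is immediate via \cref{lem:EstimeeSample}, which guarantees that a graphon is determined by the distribution of its samples. To upgrade this to almost sure convergence, I would write, for any fixed $\eps>0$,
\[
  \mathbb P\big[\dbox(G_{M_n},\Wcircle)\ge\eps\big]
  =\mathbb P\big[\dbox(\mathrm{Sample}_n(\Wcircle),\Wcircle)\ge\eps\big],
\]
observe that the right-hand side is summable in $n$ by \cref{lem:EstimeeSample}, and conclude by the Borel--Cantelli argument given at the end of the proof of that lemma.

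The main (mild) obstacle will be the uniformity check for the matching $M$: one must be careful to decouple correctly the two sources of randomness — the positions of the $2n$ points on $\mathcal C$ and the pairing between them — so that, after projecting out the positional data, the residual combinatorial matching is genuinely uniform on $\mathcal M_n$. Once this is settled, the rest of the proof is a direct transcription of the permutation-graph argument.
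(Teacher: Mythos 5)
Your proposal is correct and follows essentially the same route as the paper: identify $\mathrm{Sample}_n(\Wcircle_\Psi)$ with $G_{M_n}$ for a uniform matching $M_n$, then invoke \cref{lem:EstimeeSample} and Borel--Cantelli. The only (cosmetic) difference is in how the uniformity of the induced matching is justified — the paper makes this precise by introducing the sorting permutation $\tau$, which is uniform on $S_{2n}$ and hence yields a uniform matching via the standard $2^n n!$-to-$1$ correspondence, whereas you sketch the equivalent symmetry argument directly.
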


\begin{proof}
As in the proof of \cref{prop:cv_Gsigma} we first identify the distribution of 
$\mathrm{Sample}_n(\Wcircle_\Psi)$ (for $n \ge 1$).
This random graph on vertex set $[n]$ is constructed by taking i.i.d.~uniform random variables $X_1$, \dots, $X_n$ in $[0,1]$
and by connecting vertices $i$ and $j$ if and only if the chords $(\Psi_A(X_i),\Psi_B(X_i))$ and $(\Psi_A(X_j),\Psi_B(X_j))$ are crossing.

For all $i$ in $[n]$, let $(u_{2i-1},u_{2i})=\Psi(X_i)$. Since $\Psi$ is Lebesgue-preserving,
the $2n$ numbers $u_1,\dots,u_{2n}$ are i.i.d.~uniform in $[0,1]$. Let $\tau$ be the unique permutation on $[2n]$ such that
$$
u_{\tau(1)}<\dots< u_{\tau(2n)}.
%\qquad\text{ and } \set{u_{\tau(1)},\dots u_{\tau(2n)}}=\set{u_{1},\dots u_{2n}}.
$$
Then, $\mathrm{Sample}_n(\Wcircle_\Psi)$ is the intersection graph $G_{\mathfrak m}$ of the matching $\match$ of size $n$ defined by
$$
\match=(\tau^{-1}(1),\tau^{-1}(2))(\tau^{-1}(3),\tau^{-1}(4))\dots (\tau^{-1}(2n-1),\tau^{-1}(2n)).
$$
Moreover, the permutation $\tau$ is a uniform permutation of size $2n$ (and so is $\tau^{-1}$). 
So, the matching $\match$ is a uniform matching of size $n$, implying
\begin{equation}\label{eq:LoiSample_Wcircle}
\mathrm{Sample}_n(\Wcircle_\Psi) =G_{\match}
\stackrel{\text{(d)}}{=}G_{M_n}.
\end{equation}
We conclude as in the proof of \cref{prop:cv_Gsigma}: for every fixed $\eps>0$ 
\begin{equation}\label{eq:G_M_n_sommable}
\mathbb{P}\big[\delta_{\Box}(G_{M_n},\Wcircle_\Psi) \ge \eps\big]=\mathbb{P}\big[\delta_{\Box}(\mathrm{Sample}_n(\Wcircle_\Psi),\Wcircle_\Psi) \ge \eps \big],
\end{equation}
which is summable by \cref{lem:EstimeeSample}.
The Borel--Cantelli Lemma yields $G_{M_n}\stackrel{\text{a.s.}}{\to} \Wcircle_\Psi$.
\end{proof}

\subsection{(In)decomposability of matchings}\label{ssec:indec_matching}

\subsubsection{Indecomposable matchings}

We first define indecomposable matchings, which will be an analog of simple permutations for matchings.
They enjoy the nice property that split-prime circle graphs (whose definition is reviewed below) are represented by indecomposable matchings  (see \cref{prop:prime=indec}).

\begin{definition}
\label{def:decomposable_matching}
Let $\match$ be a matching of size $n$. We say that $\match$ is {\em $k$-decomposable} if
there exists a partition of $[2n]$ into four (possibly empty) parts $C_1, C_2, C_3, C_4$ such that
\begin{itemize}
\item  each $C_i$ is a circular interval (\emph{i.e.} an interval of $\{1, 2, \dots, 2n\} \mod 2n$);
\item  $C_1$ contains $1$, and the nonempty parts among $C_2$, $C_3$ and $C_4$ are ordered according to their smallest element;
\item all chords have either both extremities in $C_1 \cup C_3$ or both extremities in $C_2 \cup C_4$;
\item $C_2 \cup C_4$ contains exactly $k$ chords.
\end{itemize}
A matching of size $n$ is {\em decomposable} if it is $k$-decomposable for some $k$ with $2 \le k\le n-2$.\\
A matching is {\em indecomposable} if it is not decomposable.
\end{definition}
Observe that a matching can be $k$-decomposable for several $k$. An example of decomposable matching is given in \cref{fig:k-dec-matching}~(left). 

\begin{remark}\ 
\begin{itemize}
\item If $\match$ is decomposable then, as indicated above, $C_i$ may be empty for some $i \in \{2,3,4\}$. But then, from the last item of \cref{def:decomposable_matching} and the bounds on $k$, for $j \equiv i+2 \mod 4$,  $C_j$ should contain at least four points of the matchings (two chords).
\item We warn the reader that other notions of indecomposable matchings have appeared
in the literature see, e.g., \cite{Jefferson2015substitution-matchings}.
In the latter reference, two notions of weakly and strongly indecomposable matchings
are considered, both being weaker than the one considered here.
\end{itemize}
\end{remark}

\begin{figure}[h!]
\begin{center}
\includegraphics[width=11cm]{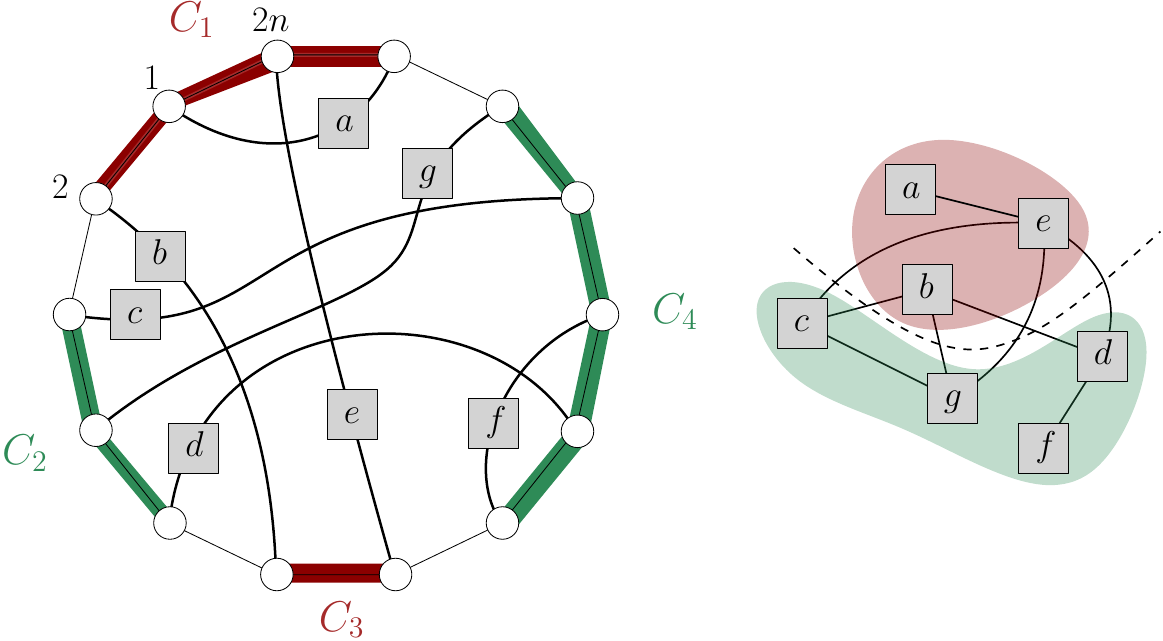}
\end{center}
\caption{Left: The circular representation of a $k$-decomposable matching for $k=\#\set{c,d,f,g}=4$. Right: the corresponding intersection graph with the cut $\set{a,b,e}\uplus\set{c,d,f,g}$ induced by $C_1,\dots ,C_4$. The corresponding cut-set  $\set{b,e}\times\set{c,d,g}$ is a complete bipartite graph and the split is depicted with the dashed line.}\label{fig:k-dec-matching}
\end{figure}

\medskip

We now introduce the necessary terminology to relate indecomposable matchings to split-prime circle graphs. 

Recall that a \emph{cut} of a graph is a partition of the vertices into two nonempty subsets $V_1$ and $V_2$, called the sides of the cut. The subset of edges that have one endpoint in each side of the cut is called a \emph{cut-set}, and a cut whose cut-set forms a (possibly empty) complete bipartite graph is called a \emph{split}.
By extension, the two sets of vertices in the complete bipartite graph defining a split will be refered to as cut vertex sets (this shall not be confused with the notion of {\em cut vertices},
not relevant here).
An equivalent definition is to say that a cut $V_1$ and $V_2$ form a split if and only if they contain subsets $V_1^{cut}$ and $V_2^{cut}$ (possibly empty) such that:
\begin{itemize}
  \item there is no edge between $V_1 \setminus V_1^{cut}$ and $V_2$, and similarly no edge between  $V_2 \setminus V_2^{cut}$ and $V_1$;
  \item any vertex in $V_1^{cut}$ is linked to any vertex in $V_2^{cut}$.
\end{itemize}

A split is trivial when one of its two sides has only one vertex in it. A graph is said to be prime for the split decomposition, or \emph{split-prime} for short, if it has no nontrivial splits ; otherwise it is \emph{split-decomposable}.

%%%%%%%%%%%%%%%%%%%%%%%%%%

\begin{proposition}
\label{prop:prime=indec}
Let $G$ be a circle graph and $\match$ be a matching that represents $G$.
Then $G$ is split-prime if and only if $\match$ is indecomposable.
\end{proposition}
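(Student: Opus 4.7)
My plan is to prove the equivalence in both directions, with the more delicate work arising in the direction \enquote{$G_\match$ split-prime implies $\match$ indecomposable}, which I treat contrapositively.

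The forward direction ($\match$ decomposable implies $G_\match$ is split-decomposable) reduces to a direct geometric verification. Given a $k$-decomposition of $\match$ with intervals $C_1, C_2, C_3, C_4$, I set $V_1$ to be the chords lying in $C_1 \cup C_3$ and $V_2$ those lying in $C_2 \cup C_4$; let $V_i^{cut}$ collect the chords of $V_i$ that straddle the two sub-arcs composing $C_1 \cup C_3$ (resp. $C_2 \cup C_4$). The bounds $2 \le k \le n-2$ yield $|V_1|, |V_2| \ge 2$. A chord of $V_1^{cut}$ and a chord of $V_2^{cut}$ have their four endpoints appearing in the alternating cyclic order $C_1, C_2, C_3, C_4$, so they necessarily cross; meanwhile, a chord with both endpoints in a single $C_j$ cannot interleave with any chord whose endpoints lie outside $C_j$, so it does not cross any chord in the opposite group. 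This exhibits a complete bipartite cut-set $V_1^{cut} \times V_2^{cut}$, certifying that $(V_1, V_2)$ is a nontrivial split of $G_\match$.

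For the backward direction, starting from a nontrivial split $(V_1, V_2)$ with cut vertex sets $V_1^{cut}, V_2^{cut}$, I would construct a 4-arc decomposition of $\match$ by examining the cyclic arrangement of chord endpoints. In the generic case where $V_1^{cut}$ and $V_2^{cut}$ are both nonempty, I pick any $c \in V_1^{cut}$, whose endpoints divide the circle into two arcs $\alpha, \alpha'$. Every chord of $V_2^{cut}$ crosses $c$ and so has exactly one endpoint in each arc, while every chord of $V_2 \setminus V_2^{cut}$ lies entirely in one of $\alpha, \alpha'$. Using an analogous argument with a chord of $V_2^{cut}$, I would show that the $V_2$-endpoints lying in $\alpha$ form a single contiguous sub-arc (and similarly in $\alpha'$); these two sub-arcs then play the role of $C_2$ and $C_4$, with the complementary arcs being $C_1$ and $C_3$. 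In the remaining case $V_1^{cut} = V_2^{cut} = \emptyset$ (a parallel split), no $V_1$-chord crosses any $V_2$-chord, so I am free to reassign chords between the two groups without altering the non-crossing property; I would then exhibit a reassignment after which the resulting endpoint sets cluster into at most two arcs each, while the nontriviality bound $|V_i| \ge 2$ keeps the corresponding $k$ inside the admissible range $[2, n-2]$.

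The main obstacle will be the contiguity claim in the generic case: proving that the $V_2$-endpoints on each side of a chosen $V_1^{cut}$-chord form a single arc amounts to ruling out alternations between $V_1$- and $V_2$-endpoints within that arc. Any such alternation would force the existence of a $V_1$-chord whose crossing pattern with $V_2$ matches neither the complete nor the empty relationship dictated by the split structure, and making this rigorous requires a careful subcase analysis on the locations of the mates of the offending endpoints, likely invoking a second chord in $V_1^{cut}$ or $V_2^{cut}$ to force the contradiction. The parallel-split case, although combinatorially simpler, also requires a small verification that the reassignment can always be arranged so that the resulting number $k = |V_2'|$ of chords in $C_2 \cup C_4$ satisfies $2 \le k \le n - 2$.
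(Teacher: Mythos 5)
Your forward direction is correct and matches the paper's: you set $V_1$ to the chords living in $C_1\cup C_3$, $V_2$ to those in $C_2\cup C_4$, identify the cut vertex sets with the diagonal chords, and observe that all cross-pairs do cross while non-diagonal chords are crossing-free across the bipartition.

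The backward direction has a genuine gap, and it is exactly the point you flag as "the main obstacle": the contiguity claim you rely on is \emph{false} for arbitrary nontrivial splits. Fixing a chord $c\in V_1^{cut}$ and an arc $\alpha$ it bounds, the $V_2$-endpoints in $\alpha$ need not be contiguous, and the alternation you hope to contradict does in fact occur. The paper exhibits two families of counterexamples (see \cref{fig:bad_splits}): splits in which the endpoints of cut chords alternate $4d$ times around the circle for $d>1$ (so $V_2$-endpoints inside $\alpha$ already fall into $d$ separate arcs), and ``impure'' $4$-splits in which a non-cut chord of one side straddles the two arcs of the other side. In neither case does your argument ("a $V_1$-chord whose crossing pattern with $V_2$ matches neither the complete nor the empty relationship") produce a contradiction, because these really are legitimate splits. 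The paper's proof contains two ideas you would need and currently lack: (i) when the alternation has period $4d$ with $d>1$, one builds the decomposition $(C_1,\dots,C_4)$ by grouping $d$ consecutive arcs into each $C_i$, which means the resulting decomposition does \emph{not} separate $V_1$ from $V_2$ — it mixes chords of both sides into $C_1\cup C_3$ and $C_2\cup C_4$; and (ii) when $d=1$ but the $4$-split is impure, one first \emph{changes the split} through an iterative ``purification'' step (moving a connected block of chords across the split) before any arc structure can be read off. Both steps are essential, and without them the contiguity claim has no chance of holding. Your ``parallel split'' ($V_1^{cut}=V_2^{cut}=\emptyset$, i.e.\ $G$ disconnected) case is also hand-wavy — the paper handles it with a separate two-case analysis (whether or not $\match$ has a chord $\{a,a+1\}$) precisely because the bound $2\le k\le n-2$ is not automatic there — but that is a smaller issue than the false contiguity claim.
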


\begin{proof}
The proof is technical and postponed to \cref{Sec:AppendiceMatching}.
\end{proof}

\subsubsection{Enumeration of decomposed matchings}

Our goal is to prove that a positive proportion of matchings are indecomposable (in \cref{lem:nb-indecomposable} below).
To this end we define
a {\em $k$-decomposed} matching as a pair formed by a $k$-decomposable matching $\match$ and a decomposition $(C_1, C_2, C_3, C_4)$ of $\match$ as in \cref{def:decomposable_matching}.
We first study the number of $k$-decomposed matchings.

\begin{lemma}\label{lem:nb_decomposed_matching}
For any $2 \leq k \leq n-2$, let $d_n^k$ be the number of $k$-decomposed matchings of size $n$.
Then 
$$
d_n^k = (n-k)\,m_{k+1}\,m_{n-k+1},
$$
 where $m_n$ is the number of matchings of size $n$.
\end{lemma}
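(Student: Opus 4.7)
The plan is to enumerate $d_n^k$ directly, splitting according to the sizes $c_i := |C_i|$ of the four parts. The constraints from \cref{def:decomposable_matching} give $c_1 \ge 1$ (since $1 \in C_1$), $c_2, c_3, c_4 \ge 0$, together with $c_1 + c_3 = 2(n-k)$ (the ``white'' side, carrying the $n-k$ chords) and $c_2 + c_4 = 2k$ (the ``black'' side). Equivalently, $c_1$ ranges over $\{1, \dots, 2(n-k)\}$ and $c_2$ over $\{0, 1, \dots, 2k\}$, while $c_3 = 2(n-k)-c_1$ and $c_4 = 2k-c_2$ are then determined.

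For a fixed size profile I would count the placements of the four intervals and the compatible matchings separately. For the placement: $C_1$ is a circular arc of length $c_1$ containing the element $1$, and there are exactly $c_1$ such arcs on a cycle of length $2n$ (the element $1$ may sit at any of the $c_1$ positions within the arc, possibly with the arc wrapping around). Once $C_1$ is fixed, $C_2$, $C_3$, $C_4$ are uniquely obtained by reading cyclically after $C_1$ with prescribed lengths $c_2, c_3, c_4$, and the ordering-by-smallest-element condition is automatically satisfied by this cyclic layout. For the matchings compatible with a given partition: since each chord has both endpoints either in $C_1 \cup C_3$ (size $2(n-k)$) or in $C_2 \cup C_4$ (size $2k$), the matching splits independently into a matching on the white vertices and one on the black vertices, which gives $m_{n-k}\cdot m_k$ possibilities.

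Summing the product over all allowed profiles gives
\[
  d_n^k = \sum_{c_1=1}^{2(n-k)} \sum_{c_2=0}^{2k} c_1 \cdot m_{n-k}\, m_k
        = m_{n-k}\, m_k\, (2k+1)\, (n-k)\, (2(n-k)+1),
\]
using $\sum_{c_1=1}^{2(n-k)} c_1 = (n-k)(2(n-k)+1)$. The recursion $m_{j+1} = (2j+1)\, m_j$, immediate from \eqref{eq_nb_matchings}, then lets me regroup as
\[
  d_n^k = (n-k)\,\bigl[(2k+1)\, m_k\bigr]\,\bigl[(2(n-k)+1)\, m_{n-k}\bigr] = (n-k)\, m_{k+1}\, m_{n-k+1},
\]
as required.

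The only delicate point is the placement count: one must notice that $C_1$ being a circular arc containing $1$ admits $c_1$ placements (not just the one starting at position $1$), since the distinguished element $1$ may lie anywhere inside the arc and $C_1$ is allowed to wrap around. Missing this factor would yield an answer smaller by the factor $(2(n-k)+1)/2$ that is precisely needed to transform $m_{n-k}$ into $m_{n-k+1}$ via the recursion. Everything else is elementary summation, with no case analysis needed to deal with empty parts thanks to the cyclic convention.
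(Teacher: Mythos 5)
Your proof is correct, and it takes a genuinely different route from the paper. The paper establishes the formula via an explicit bijection $\varphi\colon\mathcal{D}_n^k \to \mathcal{M}^\bullet_{n-k+1}\times\mathcal{M}_{k+1}$: it glues $C_1$ with $C_3$ (inserting a marked chord to remember where to cut) and $C_2$ with $C_4$ (inserting a chord whose endpoint labeled $1$ encodes the cut position), so that the factor $(n-k)\,m_{n-k+1}$ appears as $|\mathcal{M}^\bullet_{n-k+1}|$ and the factor $m_{k+1}$ as $|\mathcal{M}_{k+1}|$. You instead enumerate directly by size profile $(c_1,c_2)$, count the $c_1$ placements of the arc $C_1$ containing $1$, observe that the cyclic layout forces the remaining parts and automatically satisfies the smallest-element ordering, and multiply by the $m_{n-k}\,m_k$ compatible matchings. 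Your simplification via $m_{j+1}=(2j+1)\,m_j$ is exactly right, and the bookkeeping (in particular, that exactly one of $c_2=0$ or $c_4=0$ can hold since $k\ge 2$, and $c_3=0$ forces $c_1=2(n-k)\ge 4$ since $k\le n-2$, so no degeneracy) checks out. The two approaches are about equal in length; the paper's bijection is somewhat more structural and makes the appearance of $m_{k+1}$ and $m_{n-k+1}$ (rather than $m_k$ and $m_{n-k}$) look inevitable, whereas your summation is more elementary and makes visible the hidden factor-of-$c_1$ overcounting of placements that the bijection encodes through the marked chord.
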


\begin{proof}
Let $\mathcal{M}^\bullet_k$ be the set of matchings of size $k$ with a marked chord such that the marked chord is not the one containing $1$. Then $\#\mathcal{M}^\bullet_k = (k-1)\,m_k$.
We prove the lemma by giving a one-to-one correspondence $\varphi$ between the set $\mathcal{D}_n^k$  of $k$-decomposed matchings of size $n$ and $\mathcal{M}^\bullet_{n-k+1} \times \mathcal{M}_{k+1}$.
This construction is illustrated on \cref{Fig:Bijection_matching}. 

\begin{figure}[h]
\begin{center}
\includegraphics[width=8cm]{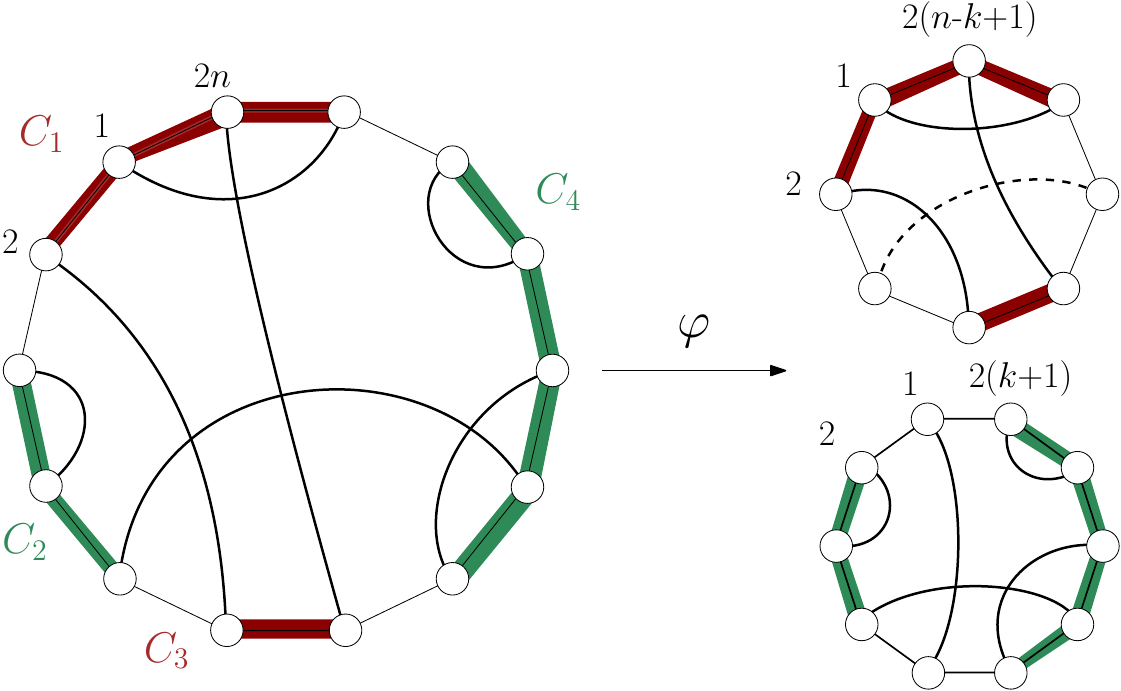}
\end{center}
\caption{Notation of the proof of \cref{lem:nb_decomposed_matching}. Left: A $k$-decomposed matching. Right: The corresponding matchings $\match$ of size $n-k+1$ (marked at the dashed chord) and $\match'$ of size $k+1$.}
\label{Fig:Bijection_matching}
\end{figure}

Let $(C_1, C_2, C_3, C_4) \in \mathcal{D}_n^k$. To obtain a matching $\match$ in $\mathcal{M}^\bullet_{n-k+1}$, we glue $C_1$ and $C_3$ together and add a marked chord separating the two parts (to remember where it has been cut; in \cref{Fig:Bijection_matching} this chord is depicted by a dashed line).  
Then we relabel the points on the circle so that the $1$ of $C_1$ remains $1$ in $\match$, and the other points are labeled by increasing order starting from $1$.
Similarily, to obtain a matching $\match'$ in $\mathcal{M}_{k+1}$, we glue $C_2$ and $C_4$ together and add a chord separating the two parts.
To remember the added chord, we label by $1$ the point of the added chord in $\match'$ that is next to the smallest number of $C_2$, and the remaining points are labeled by increasing order starting from $1$. 
If $C_2$ is empty then extremities of the added chord are labeled $1,2$, and the remaining points are also labeled in increasing order. 
We set $\varphi(C_1, C_2, C_3, C_4)=(\match,\match')$. 

To see that $\varphi$ is a bijection, we construct its inverse.  Let $\match,\match'$ be a pair of matchings in $\mathcal{M}^\bullet_{n-k+1} \times \mathcal{M}_{k+1}$. 
We cut $\match$ along the marked chord (hence deleting this chord).
This gives two circular arcs: we call $C_1$ the one containing $1$ and $C_3$ the other one.
Then we cut $\match'$ along the chord containing $1$ (this chord is also deleted).
This gives two circular arcs. We call $C_2$ the one containing the $2$ of $\match'$ (unless $\match'$ contains a chord from $1$ to $2$, in which case $C_2$ is empty). We call $C_4$ the other one.
We build a $k$-decomposed matching of size $n$ by gluing these four circular arcs, in increasing order of their indices, and preserving the orientations of the circular arcs. 
Finally we label the points in increasing order so that the $1$ of the new matching is the $1$ of $\match$. 
It should be clear that applying $\varphi$ to the four circular arcs defined above gives $(\match,\match')$. 
\end{proof}

In the sequel we will estimate $d_n^k$ for every $2\leq k\leq n-2$. As we will see, the case $k=2$ is quite different from $k>2$.

\subsubsection{Probability of $k$-decomposability for $k>2$}

Recall that $M_n$ is a random matching  taken uniformly at random among the $m_n$ matchings of size $n$. 

\begin{lemma} As $n$ tends to $+\infty$, we have
\label{lem:not-k-decomposable}
\begin{equation}\label{eq:not-k-decomposable}
\sum_{k=3}^{n-2}\frac{d_n^k}{m_n} = \BigO \left(\frac1n\right).
\end{equation}

Consequently, 
$$
 \mathbb{P}(M_n\text{ is $k$-decomposable for some }k>2) \to 0.
$$
\end{lemma}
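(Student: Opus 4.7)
My plan is to exploit the exact formula $d_n^k = (n-k)\,m_{k+1}\,m_{n-k+1}$ from \cref{lem:nb_decomposed_matching} and analyze the quantity $r_k := d_n^k/m_n$ as a function of $k$, showing that the sum in \eqref{eq:not-k-decomposable} is dominated by its two endpoint contributions $r_3$ and $r_{n-2}$. Using the recursion $m_a/m_{a-1} = 2a-1$, one directly computes
\[
\frac{r_{k+1}}{r_k} \;=\; \frac{(n-k-1)(2k+3)}{(n-k)(2n-2k+1)},
\]
which is less than $1$ for $k$ roughly below $n/2$ and greater than $1$ above. Hence the sequence $(r_k)_{k=3}^{n-2}$ is unimodal, with its minimum near $k=n/2$ and its two maxima at the endpoints. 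Direct evaluation at the endpoints gives $r_3 = 105(n-3)/((2n-1)(2n-3)) = O(1/n)$ and $r_{n-2} = 30/(2n-1) = O(1/n)$, so if the sequence decays fast enough from each endpoint the whole sum will be $O(1/n)$.

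I would split $[3,n-2]$ into three blocks. On $[3,\lfloor n/3\rfloor]$ the ratio above is bounded by $(2n/3+3)/(4n/3+1)$, which is at most $2/3$ for $n$ large enough, so a geometric series bound gives $\sum_{k=3}^{\lfloor n/3\rfloor} r_k \le 3\,r_3 = O(1/n)$. A symmetric argument on $[\lceil 2n/3\rceil, n-2]$ (reading $r_{k-1}/r_k \le 2/3$ off the same formula) yields $\sum_{k=\lceil 2n/3\rceil}^{n-2} r_k \le 3\,r_{n-2} = O(1/n)$. For the middle block $[\lfloor n/3\rfloor,\lceil 2n/3\rceil]$, the geometric decay from both sides already forces $r_{\lfloor n/3\rfloor}$ and $r_{\lceil 2n/3\rceil}$ to be $O\bigl(r_3 \cdot (2/3)^{n/3}\bigr)$, which is exponentially small in $n$; by unimodality every $r_k$ in this middle block is bounded by the maximum of these two boundary values, so summing the $O(n)$ interior terms remains exponentially small, hence negligible compared to $1/n$. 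Adding the three contributions yields \eqref{eq:not-k-decomposable}.

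For the consequence, I would apply a standard union bound: any matching admitting at least one $k$-decomposition is counted (at least once) in $d_n^k$, so $\mathbb{P}(M_n \text{ is } k\text{-decomposable}) \le r_k$. Summing over $k\in\{3,\dots,n-2\}$ and invoking \eqref{eq:not-k-decomposable} gives $\mathbb{P}(M_n \text{ is $k$-decomposable for some } k>2) \le \sum_{k=3}^{n-2} r_k = O(1/n) \to 0$.

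The main technical subtlety is that the local ratio $r_{k+1}/r_k$ approaches $1$ as $k$ approaches $n/2$, so no single geometric bound can handle the whole interval. The point of the three-range split is precisely to absorb this: the geometric decay from each endpoint shrinks $r_k$ to an exponentially small value by the time we reach the middle, after which unimodality takes over to control everything in between.
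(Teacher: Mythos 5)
Your proof is correct and uses the same core ingredients (the exact formula $d_n^k=(n-k)m_{k+1}m_{n-k+1}$ and the unimodal shape of $k\mapsto d_n^k$), but it takes a genuinely different route to the bound than the paper does. You establish geometric decay with ratio at most $2/3$ on the two outer thirds of the range, then note that by the time you reach $k\approx n/3$ and $k\approx 2n/3$ the terms are exponentially small, and unimodality bounds everything in the middle block by these exponentially small boundary values. The paper's argument is considerably shorter: it writes $d_n^k\le n\,e_n^k$ with $e_n^k=m_{k+1}m_{n-k+1}$, pulls out only the three extreme terms $k\in\{3,\,n-3,\,n-2\}$ (each seen directly to be $O(1/n)$ or smaller), and then crudely bounds the remaining $\le n$ terms by $n^2\max(e_n^4,e_n^{n-4})/m_n$, exploiting unimodality of $e_n^k$ and the fact that $e_n^4/m_n=e_n^{n-4}/m_n=\Theta(n^{-3})$. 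Your approach costs more work (two geometric-series bounds plus a separate middle-block argument) but yields the stronger information that the middle of the sum is exponentially small; the paper's approach is content with polynomial bounds and is therefore more economical. Both are valid; both rely on the same unimodality fact, though the paper applies it to $e_n^k$ (whose turning point is exactly at $(n-1)/2$) rather than to $r_k$, which avoids the extra $(n-k)$ factor that makes the unimodality of $r_k$ slightly more delicate to verify. One small caveat in your sketch: your statement that the ratio $r_{k+1}/r_k$ is ``greater than 1 above $n/2$'' is not literally true for all $k$ up to $2n$; the quadratic determining the sign of $r_{k+1}/r_k-1$ has a second root near $k=n-1$, but since that lies beyond the summation range $[3,n-2]$ the unimodality on the relevant interval still holds. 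The union-bound deduction of the probabilistic consequence matches the paper's exactly.
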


\begin{proof}
Recall that for all $n$, we have $m_n = (2n-1)m_{n-1}$,
and that, from \cref{lem:nb_decomposed_matching}, we have
$d_n^k = (n-k)\,m_{k+1}\,m_{n-k+1}$.
We set $e_n^k =m_{k+1}\,m_{n-k+1}$.
Trivially,
\[\frac{e^{k+1}_n}{e^k_n} = \frac{m_{k+2}\,m_{n-k}}{m_{k+1}\,m_{n-k+1}} = \frac{2k+3}{2n-2k+1}.\]
Hence, for fixed $n$, the map $k \mapsto {e^{k}_n}$ is decreasing for $k \le \frac{n-1}2$,
and increasing for $k \ge \frac{n-1}2$.
In particular, using $d_n^k \le n e_n^k$ after isolating some terms in the sum,
we have the bound
\begin{equation}\label{eq:intermediate_bound_proportion_dec}
  \sum_{k=3}^{n-2}\frac{d_n^k}{m_n} \le \frac{d_n^3 + d_n^{n-3}+d_n^{n-2}}{m_n} 
+ n \, \sum_{k=4}^{n-4} \frac{e_n^k}{m_n}
\le \frac{d_n^3 + d_n^{n-3}+d_n^{n-2}}{m_n} +n^2 \frac{\max(e_n^4,e_n^{n-4})}{m_n}.
\end{equation}
Trivially, for fixed $k$,
\[ \frac{e_n^k}{m_n}=\frac{e_n^{n-k}}{m_n}=
\frac{m_{k+1}}{(2n-1)\dots(2n-2k+3)}=O(n^{-k+1}),\]
implying
\[\frac{d_n^k}{m_n}=(n-k)\, \frac{e_n^k}{m_n}=O(n^{-k+2})
\text{ and } \frac{d_n^{n-k}}{m_n}=k \frac{e_n^k}{m_n}=O(n^{-k+1}). \]
Using these bounds in \eqref{eq:intermediate_bound_proportion_dec}
ends the proof of \eqref{eq:not-k-decomposable}.
The probabilistic consequence is immediate, since the number of $k$-decomposable matchings
is at most that of $k$-decomposed matchings.
\end{proof}

\subsubsection{Probability of $2$-decomposability}
\label{ssec:2decomposability}
%When convenient we see $m$ as a fixed-point free involution.

\begin{lemma}\label{lem:not-2-decomposable}
As above, let $M_n$ be a uniform random matching of size $n$. We have
$$
 \mathbb{P}(M_n\text{ is not $2$-decomposable}) \stackrel{n\to\infty}{\to} e^{-3}.
$$
\end{lemma}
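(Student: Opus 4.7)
The plan is to reduce the problem to counting four concrete ``local'' features of $M_n$ and to show that their joint distribution is asymptotically that of four independent Poisson variables with parameters summing to~$3$.

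\textbf{Step 1 (reduction).} For $n\ge 4$, a matching $\match$ of size $n$ is $2$-decomposable if and only if it contains a ``$2$-block'', that is, a pair of chords whose four endpoints form at most two maximal runs of consecutive labels on the circle (precisely the condition for $C_2\cup C_4$ to be realizable as the disjoint union of at most two circular arcs containing exactly these four points). Classifying such a pair by its gap structure ($4$ consecutive, or a $(3,1)$- or $(2,2)$-split) and by the matching type of the two chords on their four endpoints (parallel, nested, crossing) gives nine sub-cases; a direct inspection shows that each of them contains at least one of the following features of~$M_n$:
\begin{itemize}
\item[(i)] a chord of circular length $1$ (a ``short chord'');
\item[(ii)] a chord of circular length $2$;
\item[(iii)] two chords of the form $\{a,b\},\{a+1,b+1\}$ with circular distance between $a$ and $b$ at least $3$;
\item[(iv)] two chords of the form $\{a,b+1\},\{a+1,b\}$ with circular distance between $a$ and $b$ at least $3$.
\end{itemize}
Conversely, each of (i)--(iv) produces a $2$-block: for instance, a short chord $\{i,i+1\}$ pairs with the chord through $i+2$ to yield a $2$-block on $\{i,i+1,i+2,\match(i+2)\}$, whose four endpoints visibly form at most two runs. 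Hence, writing $X_\ell$ for the number of occurrences of feature~$(\ell)$ in $M_n$,
\[\{M_n\text{ is not $2$-decomposable}\}\ =\ \{X_1=X_2=X_3=X_4=0\}.\]

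\textbf{Step 2 (joint Poisson limit).} I would next compute the joint factorial moments of $(X_1,X_2,X_3,X_4)$. The key input is the elementary identity
\[\mathbb{P}\!\left(k \text{ specified pairwise disjoint chords}\subseteq M_n\right)=\frac{1}{(2n-1)(2n-3)\cdots(2n-2k+1)},\]
which, together with a straightforward count of the available positions for each feature, yields
\[\mathbb{E}\!\left[X_1^{(r_1)}X_2^{(r_2)}X_3^{(r_3)}X_4^{(r_4)}\right]\ \stackrel{n\to +\infty}{\longrightarrow}\ 1^{r_1}\cdot 1^{r_2}\cdot (1/2)^{r_3}\cdot (1/2)^{r_4}.\]
The dominant contribution comes from tuples in which the $r_1+r_2+2r_3+2r_4$ chords are pairwise disjoint, while overlapping configurations (for example, when a length-$2$ chord coincides with the inner chord of an A-nested pair with $b-a=3$) contribute an extra factor $O(1/n)$ and are thus negligible. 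These are precisely the joint factorial moments of independent Poisson variables with parameters $1,1,\tfrac12,\tfrac12$, so by the classical method of moments $(X_1,X_2,X_3,X_4)$ converges jointly in distribution to such an independent Poisson quadruple. Combining this with Step~1,
\[\mathbb{P}(M_n\text{ is not $2$-decomposable})\ \longrightarrow\ e^{-1}\cdot e^{-1}\cdot e^{-1/2}\cdot e^{-1/2}\ =\ e^{-3}.\]

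The main obstacle I expect is the case analysis in Step~1: one must carefully enumerate the nine $2$-block sub-cases and verify that each falls into one of (i)--(iv), paying particular attention to the $(2,2)$-nested sub-case, which lands in feature~(ii) when $b-a=3$ (the shorter chord $\{a+1,b\}=\{a+1,a+3\}$ has length $2$) and in feature~(iv) when $b-a\ge 4$.
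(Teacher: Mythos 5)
Your proposal follows essentially the same route as the paper's: characterize (non-)$2$-decomposability via a handful of local configurations, show that the corresponding counting variables are jointly asymptotically Poisson via factorial moments, and read off $e^{-3}$ from the product of the Poisson means. Your $X_1$ and $X_2$ coincide with the paper's $x(\match)$ and $y(\match)$, and $X_3+X_4$ is essentially the paper's $z(\match)$, so the two sets of features carry the same information.

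There is, however, a genuine gap in Step~1. The identity $\{M_n\text{ not $2$-decomposable}\}=\{X_1=X_2=X_3=X_4=0\}$ is not correct as written. What the features (i)--(iv) (equivalently, the presence of a ``$2$-block'') characterize is being $2$-decomposable \emph{or} $(n-2)$-decomposable. The reason is the labelling constraint in \cref{def:decomposable_matching}: $C_1$ must contain the point~$1$. If every $2$-block in $\match$ has $1$ among its four endpoints, then those four points cannot serve as $C_2\cup C_4$, and one obtains only an $(n-2)$-decomposition, not a $2$-decomposition. This is exactly why the paper states its characterization (\cref{lem:XYZ-decomposability}) with the two-sided condition ``neither $2$- nor $(n-2)$-decomposable'', and then opens the proof of the present lemma by invoking \cref{lem:not-k-decomposable} with $k=n-2$ to discard $(n-2)$-decomposable matchings. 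Your argument needs the same extra input: bound $\mathbb{P}(\text{has a $2$-block but is not $2$-decomposable})\le \mathbb{P}(M_n\text{ is $(n-2)$-decomposable})\to 0$. Once this is added, Step~2 goes through as you sketch, modulo the cosmetic adjustment that feature~(iv) should require circular distance at least~$4$ (not~$3$) so as not to overlap with feature~(ii), as you yourself observe in the closing remark.
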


The remainder of \cref{ssec:2decomposability} is devoted to the proof
of \cref{lem:not-2-decomposable}. 
To this end, we introduce the following quantities.
For a matching $\match$, let $x(\match) = \sum_{i=1}^{2n} \One[\match(i)\equiv i+1]$ be the number of chords between adjacent points, where $\equiv$ stands for equality mod $2n$. Similarly we set $y(\match)=\sum_{j=1}^{2n} \One[\match(j)\equiv j+2]$. Finally let 
$$
z(\match)=\sum_{\substack{1\leq k<\ell\leq 2n \\ \ell-k\not\equiv \pm 1}} \One\big[\set{\match(k),\match(k+1)}\equiv\set{\ell,\ell+1}\big];
$$
\emph{i.e.}~$z(\match)$ counts pairs of consecutive points matched to another pair of consecutive points.
 The definitions of $x(\match)$, $y(\match)$ and $z(\match)$ are illustrated on \cref{Fig:XnYnZn}.

\begin{figure}[ht]
\begin{center}
\includegraphics[width=11cm]{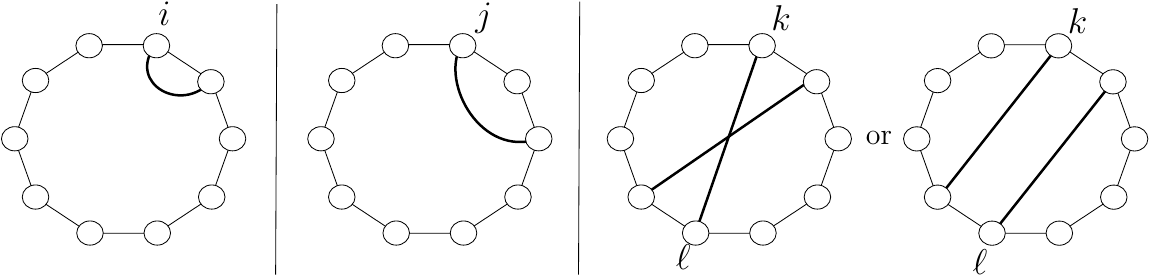}
\end{center}
\caption{Sub-configurations of a matching $\match$ counted by $x(\match)$, $y(\match)$, $z(\match)$.}
\label{Fig:XnYnZn}
\end{figure}

\begin{lemma}
\label{lem:XYZ-decomposability}
For a matching $\match$, whose size we denote $n$, we have $x(\match)=y(\match)=z(\match)=0$ if and only if $\match$ is neither $2$-decomposable nor $(n-2)$-decomposable. 
\end{lemma}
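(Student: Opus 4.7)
The plan is to prove the two directions of the equivalence separately, both via a case analysis on the ``four-point block'' that realises the decomposition.

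For the direction ``$\match$ is $2$- or $(n-2)$-decomposable $\Rightarrow x + y + z > 0$'', the key observation is that, in both types of decomposition, exactly one of the two halves ($C_2\cup C_4$ in the $2$-case, $C_1\cup C_3$ in the $(n-2)$-case) consists of four points carrying exactly two chords, and is a union of at most two circular intervals. This leaves three possible geometric shapes for this block: four cyclically consecutive points, three consecutive points plus one isolated singleton, or two disjoint pairs of two consecutive points. For each shape I would enumerate the possible chord patterns and check by direct inspection that at least one of $x, y, z$ is then strictly positive. The three relevant observations are: a chord between two cyclically adjacent points contributes to $x$; a chord between two points at cyclic distance two contributes to $y$; and a pair of chords of the form $(p,p+2)(p+1,p+3)$, $(p,q)(p+1,q+1)$ or $(p,q+1)(p+1,q)$ sends the consecutive pair $\{p,p+1\}$ to another consecutive pair, hence contributes to $z$.

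For the reverse direction, for each of $x, y, z$ strictly positive, I would produce an explicit four-point block $B \subseteq [2n]$ that already carries exactly two chords of $\match$ and splits into at most two circular intervals. Concretely, if $y > 0$ with chord $(j, j+2)$, the block is $B = \{j, j+1, j+2, \match(j+1)\}$, splitting as $\{j,j+1,j+2\} \sqcup \{\match(j+1)\}$. If $z > 0$, the block is $B = \{k, k+1, \ell, \ell+1\}$, splitting into two pairs. If $x > 0$ with chord $(i, i+1)$, the block is either $\{i-1, i, i+1, \match(i-1)\}$ (left) or $\{i, i+1, i+2, \match(i+2)\}$ (right). In every case the four points of $B$ are matched only among themselves inside the matching, so all other $n-2$ chords automatically live inside the complementary arc $[2n] \setminus B$.

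The main technical obstacle is then to place $B$ correctly relative to the distinguished vertex $1$, which is required to lie in $C_1$. The idea is: if $1 \notin B$, place $B$ inside $C_2 \cup C_4$ and take $C_1$ to be the arc containing $1$; this yields a $2$-decomposition. If $1 \in B$, place $B$ inside $C_1 \cup C_3$ instead; this yields an $(n-2)$-decomposition. For the $x$-case, the availability of both a left and a right block provides enough flexibility: the only delicate situation is $1 \in \{i, i+1\}$, where both blocks contain $1$, but in that case we simply use the $(n-2)$-decomposition with either one. The remaining verifications---that the four resulting pieces $C_1, C_2, C_3, C_4$ are circular intervals satisfying the cyclic ordering convention of \cref{def:decomposable_matching}---are then a routine inspection.
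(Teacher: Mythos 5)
Your proof is correct and takes essentially the same approach as the paper: the paper also produces the same explicit four-part decompositions from each of $x$, $y$, $z$ being positive (taking $k=\match(j+1)$ for $y>0$, $k=\match(i+2)$ for $x>0$, and the four endpoints $k,k+1,\ell,\ell+1$ for $z>0$), and resolves the position of $1$ by the same dichotomy between a $2$- and an $(n-2)$-decomposition. The extra ``left'' block you introduce for the $x$-case is harmless but unnecessary, since, as you yourself observe, the $(n-2)$-decomposition already covers the case where the block contains~$1$.
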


\begin{proof}
We shall prove that if one of $x(\match)$, $y(\match)$ or $z(\match)$ is positive,
then $\match$ is either $2$-decomposable or $(n-2)$-decomposable. 
The proof of the converse statement is easy and left to the reader.
\begin{itemize}
\item Assume first that $z(\match)>0$, \emph{i.e.}~there exists $k<\ell$ such that
$$\set{\match(k),\match(k+1)}\equiv\set{\ell,\ell+1}.$$ 
Then the set $[2n]$ can be split into four (possibly empty) circular intervals
\[\{k,k+1\}, \{k+2,\dots \ell-1\}, \{\ell,\ell+1\}, \{\ell+2,\dots,k-1\},\]
such that all chords have both extremities in the same interval or in diagonally facing intervals. Note that  $\{k,k+1\} \cup \{\ell,\ell+1\}$ contains exactly two chords.
The labeling of these circular intervals as $C_1$, $C_2$, $C_3$ and $C_4$
depends on whether $k=1$ or $\ell=n$, or none of those, so that either $C_1 \cup C_3$ or $C_2 \cup C_4$  contains 2 chords.
 Thus $z(\match)>0$ implies that the matching $\match$ is either $2$ or $(n-2)$-decomposable.
\item Assume now that $y(\match)>0$, \emph{i.e.}~there exists $j$ such that $\match(j)=j+2$.
 Letting $k=\match(j+1)$, the following partition
of $[2n]$ into four (possibly empty) circular intervals
\[\{j,j+1,j+2\}, \{j+3,\dots k-1\}, \{k\}, \{k+1,\dots,j-1\}\]
shows that $\match$ is either $2$ or $(n-2)$-decomposable.
\item The same conclusion holds true if $x(\match)>0$;
the proof is similar, except that we set $k=\match(i+2)$.\qedhere
\end{itemize}
\end{proof}

Recalling that $M_n$ denotes a uniform random matching of size $n$, 
let us define  
$X_n=x(M_n)$, $Y_n=y(M_n)$ and $Z_n=z(M_n)$. 

%\comment{Fin modif Mathilde}
\begin{lemma}
\label{lem:XYZ-poisson}
The triple $(X_n,Y_n,Z_n)$ converges in distribution towards a triple of independent Poisson random variables with mean 1. 
\end{lemma}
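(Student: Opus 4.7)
The plan is to establish joint Poisson convergence via the method of factorial moments, the standard tool when several integer-valued random variables count rare local patterns in a combinatorial structure. Concretely, I will show that for every triple $(a,b,c)$ of nonnegative integers,
\[
\mathbb{E}\big[(X_n)_a\, (Y_n)_b\, (Z_n)_c\big] \xrightarrow[n\to\infty]{} 1,
\]
where $(m)_k = m(m-1)\cdots(m-k+1)$. Since independent Poisson$(1)$ variables have all joint factorial moments equal to $1$ and a Poisson distribution is determined by its moments, this suffices to conclude.

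The key input is that, for any choice of $p$ pairwise-disjoint pairs of points of $[2n]$, the probability that $M_n$ contains all of them as chords equals
\[
\frac{m_{n-p}}{m_n} = \frac{1}{(2n-1)(2n-3)\cdots(2n-2p+1)} \sim (2n)^{-p}.
\]
After expanding the falling factorials and using linearity of expectation, the joint factorial moment rewrites as a sum over ordered tuples formed by $a$ distinct positions (one per $X$-event), $b$ distinct positions (one per $Y$-event), and $c$ distinct admissible pairs $(k,\ell)$ (one per $Z$-event). The summand associated to each tuple is the probability that all the listed events hold in $M_n$. If two listed events force a common vertex of $[2n]$ to belong to two distinct chords, the tuple is impossible in a matching and contributes $0$; otherwise the tuple fixes some set of $p$ pairwise-disjoint chords and contributes $m_{n-p}/m_n$.

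The dominant contribution comes from tuples whose events involve pairwise disjoint vertex sets: there are asymptotically $(2n)^a \cdot (2n)^b \cdot (2n^2)^c$ such tuples (one index for each $X$- or $Y$-event, and $\sim 2n^2$ admissible pairs for each $Z$-event), each $Z$-event may be realised in $2$ ways (parallel or crossing), and every such tuple fixes exactly $p = a+b+2c$ chords. Gathering the terms yields
\[
(2n)^a \cdot (2n)^b \cdot (2n^2)^c \cdot 2^c \cdot (2n)^{-(a+b+2c)} = 1+o(1),
\]
all the powers of $n$ and of $2$ cancelling once one uses $(2n^2)^c = 2^c n^{2c}$. For tuples with overlapping events (sharing at least one full chord, since sharing only a vertex is impossible in a matching) the loss in the number of free position-parameters is always strictly greater than the saving in the number of chords, so each such overlap pattern contributes $O(1/n)$, and the same bound handles the cyclic boundary corrections hidden in the $\sim$ estimates. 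Summing, the joint factorial moment tends to $1$, and the method of moments yields the stated convergence.

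The only delicate step is the case analysis behind the overlap bound: the three families of events interact in several nontrivial ways (for instance, a $Z$-event on a pair $(k, k+2)$ realised in crossing mode shares a chord with the $X$-event at position $k+1$ and with the $Y$-event at position $k$; two $Z$-events can share exactly one chord in several configurations; and so on). One must therefore draw up a finite list of overlap patterns and check each separately, verifying that forcing one chord to be shared rigidifies at least two of the position-parameters. This is routine but is where the bulk of the combinatorial bookkeeping lies.
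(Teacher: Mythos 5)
Your proof follows the same route as the paper: joint factorial moments, the $m_{n-p}/m_n \sim (2n)^{-p}$ input, a clean computation showing the non-overlapping (``nice'') tuples contribute $1+o(1)$, and an error bound reducing the overlapping tuples to a finite case analysis where the number of shared chords exceeds the number of lost free indices. One small slip in your illustrative example: a $Z$-event at $(k,k+2)$ in \emph{crossing} mode produces chords $\{k,k+3\}$ and $\{k+1,k+2\}$, so it overlaps with the $X$-event at $k+1$ but not with the $Y$-event at $k$ (that would require \emph{parallel} mode); this does not affect the argument.
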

\begin{proof}
The proof is technical and postponed to \cref{Sec:AppendicePoisson}.
\end{proof}

\begin{proof}[Proof of \cref{lem:not-2-decomposable}]
Using \cref{lem:not-k-decomposable} for $k=n-2$,
we have
\[\mathbb{P}(M_n\text{ is not $2$-decomposable})=
\mathbb{P}(M_n\text{ is neither $2$ nor $n-2$-decomposable})+\mathrm{o}(1). \]
But \cref{lem:XYZ-decomposability} asserts that the latter event is the same as
\enquote{$X_n=Y_n=Z_n=0$}. Therefore
\[\mathbb{P}(M_n\text{ is not $2$-decomposable})=
\mathbb{P}(X_n=Y_n=Z_n=0)+\mathrm{o}(1),\]
and the latter tends to $e^{-3}$ by \cref{lem:XYZ-poisson}.
   \end{proof}

\subsubsection{Probability of indecomposability}

Bringing together \cref{lem:not-k-decomposable,lem:not-2-decomposable}, we can give a lower bound on the probability for a uniform matching to be indecomposable.

\begin{proposition}\label{lem:nb-indecomposable}
Let $M_n$ be a uniform matching of size $n$. Then for $n$ large enough,
$$
 \mathbb{P}(M_n\text{ is indecomposable}) > e^{-4}.
$$
\end{proposition}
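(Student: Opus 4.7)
The plan is to combine the two preceding lemmas via a simple union-bound argument. Let $A_n$ denote the event that $M_n$ is $2$-decomposable, and $B_n$ the event that $M_n$ is $k$-decomposable for some $k$ with $3 \le k \le n-2$. By \cref{def:decomposable_matching}, $M_n$ is indecomposable if and only if neither $A_n$ nor $B_n$ occurs, so I would write
\[
\mathbb{P}(M_n\text{ indecomposable})
= \mathbb{P}(A_n^c \cap B_n^c)
\geq \mathbb{P}(A_n^c) - \mathbb{P}(B_n).
\]

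From \cref{lem:not-2-decomposable}, $\mathbb{P}(A_n^c) \to e^{-3}$, and from \cref{lem:not-k-decomposable}, $\mathbb{P}(B_n) \to 0$ (the probabilistic consequence stated there is exactly this, since $B_n$ is contained in the event that $M_n$ is $k$-decomposable for some $k > 2$). Therefore
\[
\liminf_{n\to \infty} \mathbb{P}(M_n\text{ indecomposable}) \;\geq\; e^{-3}.
\]

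Since $e^{-3} > e^{-4}$ strictly, this lower bound implies that $\mathbb{P}(M_n\text{ indecomposable}) > e^{-4}$ for all $n$ sufficiently large, which is exactly the claim. There is no real obstacle here: both ingredients have been established in the two previous lemmas, and the only point to be careful about is the bookkeeping of which range of $k$ is handled by which lemma (the case $k=2$ by \cref{lem:not-2-decomposable}, the range $k \ge 3$ by \cref{lem:not-k-decomposable}).
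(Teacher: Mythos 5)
Your proof is correct and is essentially the same union-bound argument as the paper's: the inequality $\mathbb{P}(A_n^c \cap B_n^c) \ge \mathbb{P}(A_n^c) - \mathbb{P}(B_n)$ is precisely the union bound $\mathbb{P}(\text{decomposable}) \le \mathbb{P}(A_n) + \mathbb{P}(B_n)$ that the paper uses, and the two ingredients (\cref{lem:not-2-decomposable} for $k=2$, \cref{lem:not-k-decomposable} for $k>2$) are invoked identically. No issues.
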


\begin{proof}
%$\mathbb{P}(M_n\text{ is indecomposable}) = 1 - \mathbb{P}(M_n\text{ is decomposable})$.
We write
\begin{multline*}
 \mathbb{P}(M_n\text{ is decomposable})
   \leq \mathbb{P}(M_n\text{ is $2$-decomposable}) \\
   + \mathbb{P}(M_n\text{ is $k$-decomposable for some }k>2).
   \end{multline*}
   From \cref{lem:not-k-decomposable,lem:not-2-decomposable},
   the latter converges to $1 - e^{-3}$ as $n$ tends to $+\infty$, implying the proposition.
\end{proof}

\subsection{On the number of matchings that represent a given circle graph}

Let $\match$ be a matching. Then the {\em shift} of $\match$ is the matching obtained by replacing each chord $(i,j)$ of $\match$ by the chord $(i+1,j+1)$ (where $2n+1$ is identified with $1$).
This operation corresponds to a rotation of the circular representation of $\match$.
Moreover, the {\em reversal} of $\match$ is the matching obtained by replacing each chord $(i,j)$ of $\match$ by the chord $(2n+1-i,2n+1-j)$.
This operation corresponds to a symmetry of the circular representation of $\match$ (specifically, to the reflexion with respect to the diameter passing between points labeled $1$ and $2n$ on one side, and between points labeled $n$ and $n+1$ on the other side).

Let $G$ be a circle graph and $\match$ be a matching that represents $G$.
Then every matching obtained from $\match$ by a sequence of shifts and reversals also represents $G$.
It has been proved that when $G$ is split-prime, this is the unique source for the lack of uniqueness of the representative:

\begin{proposition}[Corollary in Section 8 of \cite{Gabor}]\label{prop:Gabor-unicite}
Let $G$ be a split-prime circle graph with at least five vertices.
Then there is a unique (up to shifts and reversals) matching $\match$ such that $G=G_\match$.
\end{proposition}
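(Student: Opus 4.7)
The plan is to adapt the split decomposition argument of Gabor, proceeding by contradiction: assume that $\match$ and $\match'$ are two matchings representing $G$ that are not related by a sequence of shifts and reversals, and derive the existence of a nontrivial split of $G$, contradicting split-primality.

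The first step is to encode each matching $\match$ of size $n$ as a cyclic word $w_\match$ of length $2n$ on the alphabet $V_G=\{v_1,\dots,v_n\}$, in which every letter appears exactly twice (the two endpoints of the chord labeled by that vertex). The adjacency relation of $G_\match$ can be read directly from $w_\match$: two vertices $v_i,v_j$ are adjacent if and only if their two occurrences interleave in the cyclic word. Thus $\match$ and $\match'$ produce two double-occurrence cyclic words $w$ and $w'$ that induce the same interleaving relation on $V_G$; the conclusion of the proposition amounts to showing that $w$ and $w'$ are equal up to rotation and reflection of the cyclic order.

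The second step is the main combinatorial input, which I would borrow from Bouchet's theorem on representations of $4$-regular Eulerian graphs (equivalently, Cunningham's split decomposition of $G$): if two double-occurrence cyclic words have the same interleaving relation but are not related by rotation/reflection, then there exists a proper nonempty subset $A \subsetneq V_G$ whose occurrences in $w$ occupy two disjoint arcs of the cyclic word, such that reversing one of these arcs yields $w'$ while preserving all interleavings. In the circular representation, this amounts to finding two diametrically placed secant lines that cut the disk into four arcs, along which chords can be ``flipped'' without changing the intersection graph.

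The decisive step, which I expect to be the main obstacle, is to translate the existence of such an $A$ into a nontrivial split of $G$. Chords with both endpoints in the occurrences of $A$ remain internal to the corresponding vertex subset, and similarly for $V_G \setminus A$; the remaining chords, which join an occurrence of $A$ to an occurrence of $V_G \setminus A$, are forced by the flip-invariance to realize a complete bipartite pattern between two well-defined cut vertex sets $A^{\mathrm{cut}} \subseteq A$ and $(V_G \setminus A)^{\mathrm{cut}} \subseteq V_G \setminus A$. This is exactly the notion of split reviewed before \cref{prop:prime=indec}. The hypothesis that $G$ has at least five vertices is used here to guarantee that neither side of the split reduces to a single vertex (small cases with $n \le 4$ admit exceptional flips, essentially the dihedral symmetries of tiny chord diagrams), so that the resulting split is nontrivial. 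This contradicts the split-primality of $G$ and yields the desired uniqueness.
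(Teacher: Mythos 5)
The paper itself does not prove this proposition: it is imported verbatim as the Corollary in Section~8 of Gabor, Supowit and Hsu \cite{Gabor}, where uniqueness falls out of an explicit, constructive reconstruction algorithm for the chord diagram of a split-prime circle graph. So there is no in-paper argument to compare with, and your sketch must stand on its own. It takes a genuinely different route -- a proof by contradiction via a structural classification of all realizers of a fixed circle graph -- which is a legitimate strategy in principle.

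The real gap is Step~2. The claim that two inequivalent double-occurrence cyclic words with the same interleaving relation must differ by a \emph{single} reversal of an arc carried by a subset $A$ whose two occurrence-arcs are complementary is not a corollary of ``Bouchet's theorem on representations of $4$-regular Eulerian graphs'', nor of the existence of Cunningham's split decomposition tree. In general one needs a \emph{sequence} of such local moves, and even the weaker statement -- that at least one such move is available whenever the two words are not shift/reflection equivalent -- is not an off-the-shelf theorem in the form you invoke it: it is, in essence, the Bouchet/GSH theorem that realizations of a circle graph are governed by its split decomposition tree, which is precisely the proposition at hand (or an equivalent reformulation of it). As written, Step~2 presupposes the conclusion. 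To make the sketch into a proof you would need to actually establish that a discrepancy between two realizers forces a subset $A$ whose occurrences form two complementary arcs, and that is the hard part that GSH handle by a constructive argument.

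Two smaller remarks. Step~3 is already one direction of \cref{prop:prime=indec} in the paper: ``the occurrences of $A$ occupy two disjoint circular arcs'' is exactly the decomposability condition of \cref{def:decomposable_matching}, so you may simply cite it rather than re-derive the complete bipartite structure of the cut-set. Finally, your account of the hypothesis $n\ge 5$ is off. For $n=4$ every circle graph is split-decomposable, so the statement is vacuous; the hypothesis really excludes $n\le 3$, where split-primality holds vacuously (every cut is trivial) while uniqueness up to shifts and reversals genuinely fails -- the empty graph on three vertices, for instance, is realized both by $\{1,2\}\{3,4\}\{5,6\}$ and by $\{1,4\}\{2,3\}\{5,6\}$, and these are not related by a dihedral symmetry. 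It is not that the induced split degenerates to a trivial one; it is that in these tiny cases no split is available at all, so the contradiction you are aiming for cannot be reached.
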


\begin{corollary}\label{coro:1a4n-matching-pr-premier}
Let $G$ be a split-prime circle graph with $n \geq 5$ vertices. The number of matchings $\match$ such that $G=G_\match$ is between $1$ and $4n$.
\end{corollary}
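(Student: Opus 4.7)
The lower bound is immediate: since $G$ is a circle graph by assumption, there exists at least one matching $\match$ with $G=G_\match$.

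For the upper bound, the plan is to apply \cref{prop:Gabor-unicite} and then count how many distinct matchings can appear in a single orbit under the action of shifts and reversals. Fix a matching $\match_0$ with $G = G_{\match_0}$, which exists since $G$ is a circle graph. By \cref{prop:Gabor-unicite}, every matching $\match$ satisfying $G = G_\match$ is obtained from $\match_0$ by a sequence of shifts and reversals, so it suffices to bound the size of this orbit.

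Shifts on matchings of size $n$ form a cyclic group of order (dividing) $2n$, generated by the shift by one position on the $2n$ points of the circle. A reversal is an involution, and conjugating a shift by a reversal gives the inverse shift; thus the group generated by all shifts and reversals is a quotient of the dihedral group on $2n$ elements, which has order $4n$. Consequently the orbit of $\match_0$ under this group action contains at most $4n$ matchings, which yields the desired bound.

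There is no real obstacle here: the argument is a direct orbit-counting consequence of \cref{prop:Gabor-unicite}. The only minor point to note is that the bound $4n$ is not in general tight, since the stabilizer of $\match_0$ (rotational/reflectional symmetries of its circular representation) may be nontrivial, in which case the number of distinct representatives is strictly smaller than $4n$; however for the stated upper bound this is harmless.
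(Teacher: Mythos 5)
Your proof is correct and takes the same (indeed, the only natural) approach: the paper presents this as an immediate corollary of \cref{prop:Gabor-unicite} without further argument, and your orbit-counting under the dihedral group of order $4n$ on the $2n$ circle points is exactly the implicit reasoning.
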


\begin{proposition}\label{prop:<n-representants}
The proportion of matchings of size $n$ whose associated circle graphs
 have strictly less than $4n$ representatives is $o(n^{-n/3})$.
\end{proposition}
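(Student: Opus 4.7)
My plan is to reduce the problem to a stabilizer-counting argument. Let $\Gamma$ be the dihedral group of order $4n$ generated by shifts and reversals, acting on $\mathcal{M}_n$; two matchings in the same $\Gamma$-orbit represent the same circle graph, so the $\Gamma$-orbit of any $M \in \mathcal{M}_n$ is contained in $\{\match' \in \mathcal{M}_n : G_{\match'} = G_M\}$. In particular, if $M$ has trivial $\Gamma$-stabilizer, then $G_M$ has at least $|\Gamma| = 4n$ representatives. Contrapositively, a matching whose circle graph has strictly fewer than $4n$ representatives must have a non-trivial stabilizer, so by a union bound it suffices to establish a uniform upper bound on $|\mathrm{Fix}(g)|$ for each non-identity $g \in \Gamma$.

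The technical core is the estimate
\[ |\mathrm{Fix}(g)| = O\bigl(n\, (2n/e)^{n/2}\bigr) \quad \text{for every } g \in \Gamma \setminus \{e\}, \]
with implicit constants independent of $g$ and $n$. I would prove this by a short case analysis. If $g = \rho_k$ is a rotation of order $d \ge 2$, then $\langle g \rangle$ partitions $[2n]$ into $N = 2n/d$ orbits of common size $d$, and a $\rho_k$-invariant matching is determined by a decomposition of these orbits into \emph{self-matched} singletons (each contributing a single \enquote{diameter} matching within its orbit, possible only when $d$ is even) and \emph{paired} pairs (each admitting $d$ interlocking configurations). This gives
\[ |\mathrm{Fix}(\rho_k)| \le \sum_{S+2P=N} \binom{N}{S}(2P-1)!!\,d^P, \]
and a routine Stirling analysis bounds the right-hand side by $(2n/e)^{N/2}$ up to a polynomial factor. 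Since $N = 2n/d \le n$, the bound is maximized at $d=2$ (\emph{i.e.}, $g$ the rotation by $n$), where it equals $\sqrt{2}(2n/e)^{n/2}$ up to polynomial factors. Reflections are handled analogously: their zero or two fixed points force a specific chord, and the remaining points reduce to a parametrization of the same order of magnitude.

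Summing the uniform bound over the $4n-1$ non-identity elements of $\Gamma$ and dividing by $m_n \sim \sqrt{2}(2n/e)^n$ yields a proportion
\[ O\bigl(n^2\,(e/(2n))^{n/2}\bigr) = O\bigl(n^2\,(e/2)^{n/2}\,n^{-n/2}\bigr), \]
which is $o(n^{-n/3})$: multiplying by $n^{n/3}$ gives $O(n^2\,(e/2)^{n/2}\,n^{-n/6}) \to 0$, the super-polynomial decay $n^{-n/6}$ dominating the single-exponential growth $(e/2)^{n/2}$. The main obstacle is the case analysis underlying the uniform bound on $|\mathrm{Fix}(g)|$: one must verify that involutions (the single rotation of order 2 and the $2n$ reflections) are the extremal cases, and that rotations of order $d \ge 3$ produce the strictly smaller count $(2n/e)^{n/d}$. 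Heuristically, a $g$-invariant matching has only \enquote{half} the degrees of freedom of a generic one, explaining the $(2n/e)^{n/2} \approx \sqrt{m_n}$ scaling that drives the final estimate.
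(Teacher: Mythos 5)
Your proposal is correct and takes essentially the same approach as the paper: observing that fewer than $4n$ representatives forces a nontrivial dihedral stabilizer, encoding $g$-invariant matchings by a singleton/pair decomposition of the $2n/d$ rotation orbits (with $d^P$ choices for the pairs), and comparing a $\sqrt{m_n}$-type bound against $m_n$. The only cosmetic differences are that the paper bounds the count of $g$-invariant matchings via an exponential generating function and Cauchy's formula rather than a direct sum plus Stirling, and it disposes of reflections by an explicit bijection to the rotation-by-$\pi$ case rather than your parallel (and correct, if somewhat compressed) argument.
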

As we will see in the proof, this bound is far from optimal but sufficient for our purposes.
\begin{proof}
The circle graph associated to a matching of the numbers $\{1,\dots,2n\}$
may have less than $4n$ representatives only if 
the matching is fixed by some nontrivial symmetry $s$ of a regular $2n$-gon.

Let us fix some symmetry $s$ of a regular $2n$-gon. 
We consider the set $\mathcal M^s_{n}$ of matchings fixed by $s$,
and denote by $m^s_n$ its cardinality. 
The cases where $s$ is a rotation of order $2$
and where $s$ is a reflection
 lead to the same enumeration sequence 
 -- too see this, simply switch $2n-i$ and $n+i$ for $i \le n/2$; 
 this turns a rotation-invariant matching into a reflection-invariant one.
Hence we assume that $s$ is a rotation. Denote its order by $d$.
We have $2n=dk$ for some integer $k$.
An element $M$ in $\mathcal M^s_{n}$ is uniquely encoded by a partition $\pi$
of the numbers $\{1,\dots,k\}$ into singletons and pairs
 with the following constraints and interpretation.
\begin{itemize}
\item Singletons are allowed only if $d$ is even (\emph{i.e.}~when $n$ is a multiple of $k$);
a singleton $\{i\}$ represent the chord $\{i,i+n\}$ and its rotations ($\{i+k,i+k+n\}$, \dots, 
$\{i+n-k,i+2n-k\}$). Having $d$ even and containing these chords is the only possibility for a rotation-invariant
matching $M$ to contain a chord joining $i$ to some integer in the same class mod.~$k$;
hence singletons in $\pi$ encode all chords of $M$ joining integers within the same modulo class.
\item Pairs $\{i,j\}$ are decorated with a number $h$ in $\{0,\dots,d-1\}$
and represent the chord $\{i,j+h\, k\}$ and its rotations ($\{i+k,j+(h+1)\, k\}$, \dots, $\{i+(d-1)k,j+(h+d-1)\, k\}$, working mod.~$2n$).
These pairs in $\pi$ encode all chords of $M$ joining integers in different classes modulo $k$.
\end{itemize}
Using the formalism of labeled combinatorial classes \cite[Chapter II]{Violet},
this yields the following expression for an exponential generating series
of $\mathcal M^s_{n}$. For fixed $d\ge 2$,
\[ 
\sum_{k \ge 1} m^s_{dk/2} \frac{z^{k}}{k!} = \exp\big( z \delta_{d\text{ even}} + d \tfrac{z^2}2\big).
\]
With Cauchy formula, we get
\[ m^s_{dk/2} = \frac{k!}{2 \pi\, i} \oint \frac{\exp\big( z \delta_{d\text{ even}} + d \tfrac{z^2}2\big)}{z^{k+1}} dz, \]
where we integrate over any counterclockwise contour around $0$.
We choose this contour to be the circle $\{z: |z|=\sqrt{k/d}\}$.
Recalling that $k^{k}/e^{k-1} \le k! \le k^{k+1}/e^{k-1}$ for all $k \ge 1$, we get the following upper bound:
\[m_n^s=m^s_{dk/2}  \le k!  \sqrt{k/d} \, \frac{ \exp( \sqrt{k/d} + \tfrac{k}{2}) }{\sqrt{k/d}^{k+1}}\\
\le k \, (kd)^{k/2} \exp\big( \sqrt {k/d} - k/2 + 1 \big).
\] 
For $d,k \ge 2$, we have $\exp\big( \sqrt {k/d} - k/2) \le 1$.
 Recalling that $2n=dk$, we have in particular
$m_n^s \le e (2n)^{\frac{n}{d}+1}$.
For unconstrained matchings, we have
\[ m_n=\frac{(2n)!}{2^n n!} \ge \frac{(2n)^{2n} \, e^{n-1}}{e^{2n-1} \, 2^n\, n^{n+1}} = \frac{2^n\,  n^{n-1}}{e^n}.\]
Comparing both bounds yields that for $n$ large enough, for any symmetry $s$
of order $d \ge 2$, we have 
$\frac{m_n^s}{m_n} =o(n^{-n/3-1})$, uniformly on $s$.
Since there are $4n$ possible symmetries $s$, this proves the proposition.
\end{proof}

\subsection{Proof of \cref{Th:ConvUnifCircleGraph}: limit of a uniform circle graph}

In this section, we prove \cref{Th:ConvUnifCircleGraph} which states that
the sequence of uniform random circle graphs $(\bm{G}_n)_n$ converges to $\Wcircle$ almost surely in the space of graphons.
\begin{proof}
  We denote by $\mathcal G^{\text{circle}}_n$ the set of circle graphs with $n$ vertices.
  Let $\eps>0$. We have
  \begin{equation}
  \label{eq:PGeps_circle}
  \mathbb P\big[ \delta_\Box(\bm{G}_n, \Wcircle) \geq \eps \big]
  =\frac{\# \left\{ G \in \mathcal G^{\text{circle}}_n: \delta_\Box(G, \Wcircle) \geq \eps \right\} }
  {\#\ \mathcal G^{\text{circle}}_n }\ .
  \end{equation} 
  
  By definition, a circle graph $G$ with $n$ vertices is equal to $G_\match$ for at least one matching $\match$.
  We denote by $\mathcal G^{<4n}_n$ (resp. $\mathcal G^{\geq4n}_n$) the set of circle graphs with $n$ vertices that have less than $4n$ representatives (resp. at least $4n$ representatives).
  Moreover, we denote by $\mathcal M^{<4n}_n$ (resp. $\mathcal M^{\geq4n}_n$) the set of matchings $\match$ such that $G_\match \in \mathcal G^{<4n}_n$ (resp. $G_\match \in \mathcal G^{\geq4n}_n$).
  Then,
  \begin{align}
  \begin{split}
  &\# \left\{ G \in \mathcal G^{\text{circle}}_n: \delta_\Box(G, \Wcircle) \geq \eps \right\} \\
  &= \# \left\{ G \in \mathcal G^{\geq4n}_n: \delta_\Box(G, \Wcircle) \geq \eps \right\} 
   + \# \left\{ G \in \mathcal G^{<4n}_n: \delta_\Box(G, \Wcircle) \geq \eps \right\} \\
  &\le \frac{1}{4n} \# \left\{ \match \in \mathcal M^{\geq4n}_n: \delta_\Box(G_\match, \Wcircle) \geq \eps \right\} + \# \left\{ \match \in \mathcal M^{<4n}_n: \delta_\Box(G_\match, \Wcircle) \geq \eps \right\} \\
  &\le \frac{1}{4n} \# \left\{ \match \in \mathcal M_n: \delta_\Box(G_\match, \Wcircle) \geq \eps \right\} + \# \left\{ \match \in \mathcal M^{<4n}_n\right\}.
   \end{split} \label{eq:upper_far_circle}
  \end{align} 

  On the other hand, using that split-prime circle graphs $G$ correspond to indecomposable matchings (\cref{prop:prime=indec})
  and that each split-prime graph is represented by at most $4n$ indecomposable matchings
  (\cref{coro:1a4n-matching-pr-premier}), we have
  \[ \#\,  \mathcal G^{\text{circle}}_n 
  \ge \#  \left\{ G \in \mathcal G^{\text{circle}}_n : G \text{ split-prime} \right\}
  \ge \frac{1}{4n}\, \#  \left\{ \match \in \mathcal{M}_n: \match \text{ indecomposable}   \right\}\]
From~\cref{lem:nb-indecomposable}, we know that, for $n$ large enough, the number of indecomposable matchings of size $n$
is asymptotically greater than $e^{-4}m_n$ where $m_n$ is the number of matchings of size~$n$. Thus, for $n$ large enough,
\begin{equation}
\label{eq:lower_circle}
\#\,  \mathcal G^{\text{circle}}_n                 
\ge \frac{e^{-4}m_n}{4n} .
\end{equation}
Bringing \eqref{eq:PGeps_circle}, \eqref{eq:upper_far_circle}
and~\eqref{eq:lower_circle} together, we have

\begin{align*}
  \mathbb P\big( \delta_\Box(\bm{G}_n, \Wcircle) \geq \eps \big)   
&\le \frac{e^{4}}{m_n}\, \# \left\{ \match \in \mathcal{M}_n: \delta_\Box(G_\match, \Wcircle) \geq \eps \right\}  +  4e^4 \, n \, \frac{\# \left\{ \match \in \mathcal M^{<4n}_n\right\}}{m_n}\\
&\le e^{4} \mathbb P\big( \delta_\Box(G_{M_n}, \Wcircle) \geq \eps \big)+4e^4 \, n \, \frac{\# \left\{ \match \in \mathcal M^{<4n}_n\right\}}{m_n}.
\end{align*}
We saw in the proof of \cref{prop:gcross} (see in particular \eqref{eq:G_M_n_sommable} and \cref{lem:EstimeeSample}) that the first term in the right-hand side is summable.
\cref{prop:<n-representants} tells that the second term is a $\mathrm{o}(n\times n^{-n/3})$.
Using the Borel--Cantelli Lemma (as in the proof of \cref{lem:EstimeeSample}), this concludes the proof of the theorem.
\end{proof}

\subsection{Clique density in $\Wcircle$}

\begin{proposition}\label{prop:MarginalesWmatch}
Denote by $K_k$ the clique of size $k$. For every $k\geq 1$ 
$$
\dens{K_k}{\Wcircle}=\frac{1}{m_k} = \frac{2^{k}k!}{(2k)!}.
$$
In particular the density of edges  $\dens{K_2}{\Wcircle}$ equals $1/3$ and the density of triangles $\dens{K_3}{\Wcircle}$ equals $1/15$.
\end{proposition}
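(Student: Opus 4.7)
The plan is to mimic the short proof of \cref{prop:MarginalesWinv}, using now the matching identification from \eqref{eq:LoiSample_Wcircle} in place of the permutation identification from \eqref{eq:LoiSample_Wperm}. By definition one has $\dens{K_k}{\Wcircle}=\mathbb P(\Sample_k(\Wcircle)=K_k)$, and \eqref{eq:LoiSample_Wcircle} (established inside the proof of \cref{prop:gcross}) says that $\Sample_k(\Wcircle)$ is distributed as $G_{\match}$ for a uniform random matching $\match$ of size $k$. Consequently the computation reduces to
\[\dens{K_k}{\Wcircle}=\mathbb P(G_{\match}=K_k)=\frac{\#\{\match\in\mathcal M_k : G_{\match}=K_k\}}{m_k}.\]

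The combinatorial heart of the argument will be to show that exactly one matching of size $k$ satisfies $G_{\match}=K_k$: the ``maximally crossing'' matching $\match^{\ast}$ pairing $i$ with $k+i$ for each $i\in[k]$. I would argue this by ordering: write the $k$ chords of a candidate $\match$ as $(a_1,b_1),\dots,(a_k,b_k)$ with $a_i<b_i$ and $a_1<\dots<a_k$. The requirement that the chords $(a_i,b_i)$ and $(a_j,b_j)$ cross for every $i<j$ forces $a_i<a_j<b_i<b_j$ (this is the linear-order translation of the crossing condition recalled before \cref{def:Wcircle}). Applying this simultaneously to all pairs yields $a_1<\dots<a_k<b_1<\dots<b_k$, hence $a_i=i$ and $b_i=k+i$ are forced, so $\match=\match^{\ast}$.

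The final step is purely arithmetic: using \eqref{eq_nb_matchings} one has $m_k=(2k-1)!!=(2k)!/(2^k k!)$, whose inverse is $2^k k!/(2k)!$. Specializing to $k=2$ and $k=3$ then produces the announced values $1/3$ and $1/15$, since $m_2=3$ and $m_3=15$.

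I do not foresee any serious obstacle: the argument is fully parallel to the permutation-graph case of \cref{prop:MarginalesWinv}, with the unique fully-crossing matching of $[2k]$ playing the role occupied there by the decreasing permutation $d_k$. The only point requiring its own justification is the uniqueness of the fully-crossing matching, and the short linear-ordering argument outlined above dispatches it without difficulty.
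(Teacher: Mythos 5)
Your proposal is correct and follows the same route as the paper: identify $\Sample_k(\Wcircle)$ with $G_{M_k}$ via \eqref{eq:LoiSample_Wcircle}, observe that the unique matching of size $k$ whose intersection graph is $K_k$ is $\{\{1,k+1\},\dots,\{k,2k\}\}$, and conclude with \eqref{eq_nb_matchings}. The paper leaves the uniqueness of that matching as \enquote{easy to see}; your linear-ordering argument is a correct way to fill in that detail.
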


\begin{proof}
By definition, $\dens{K_k}{\Wcircle}=\mathbb P[\Sample_k(\Wcircle)=K_k]$.
From \cref{eq:LoiSample_Wcircle}, we know that $\Sample_k(\Wcircle)$ is distributed as $G_{M_k}$,
where $M_k$ is a uniform random matching of size $k$.
It is easy to see that the only matching ${\mathfrak m}$ such that $G_{\mathfrak m}=K_k$
is the matching $\mathfrak m_0=\{\{1,k+1\},\{2,k+2\},\dots,\{k,2k\}\}$.
Hence,
\[\dens{K_k}{\Wcircle}=\mathbb P[\Sample_k(\Wcircle)=K_k]=
\mathbb P[M_k=\mathfrak m_0] = \frac{1}{m_k}.\]
We conclude using Equation~\eqref{eq_nb_matchings}.
\end{proof}

\section{Unit interval graphs}
\label{sec:unit-interval}

The main goal of this section is to prove \cref{thm:unit-interval}:
a sequence of uniform random unit interval graphs with a renormalized distance function converges in the sense of the Gromov--Prokhorov topology
towards the unit interval $[0,1]$, endowed with a random metric defined through a Brownian excursion.

An important difference with the previous sections is that the convergence is in the Gromov--Prokhorov topology and not in the sense of graphons. 
Nevertheless, we similarly focus a large part of our study on indecomposable combinatorial objects (irreducible Dyck paths here), which represent by an essentially unique way \emph{connected} unit interval graphs (playing the role of modular-prime permutation graphs or split-prime circle graphs). 
In this section, and unlike in the previous ones, our intermediate statement consists in establishing a limit result for the graph associated to a uniform \emph{indecomposable} combinatorial object, while we proved such results for uniform combinatorial objects in previous sections. 

We start by observing in \cref{sec:def_unit_interval} how connected unit interval graphs can be encoded by irreducible Dyck paths.
Then in \cref{ssec:limit-uniform-Dyck-path} we prove that the unit interval graph obtained from a uniform random irreducible Dyck path
converges in the sense of the Gromov--Prokhorov topology towards the unit interval $[0,1]$,
endowed with a random metric defined through a Brownian excursion
(the proof of a technical lemma is postponed to \cref{ssec:preuve-lemme-distance}).
In \cref{ssec:interval_removing_irred_condition} we transfer this result to uniform circle graphs.
Finally, an asymptotic result for the number of cliques of size $k$ ($k$ fixed) in a uniform random unit interval graph is given in \cref{ssec:nb-clique-unit}.

\subsection{Combinatorial encoding of unit interval graphs}
\label{sec:def_unit_interval}

An (unlabeled) graph $G$ is a unit interval graph if there exists a collection $\mathcal I=(I_1,...,I_n)$ of intervals of $\RR$ with {\em unit length} such that a labeled version $G^\ell$ of $G$ is the intersection graph associated with ${I_1,...,I_n}$. 
The collection $\mathcal I$ of intervals is then called an {\em interval representation} of $G$.

As we shall see, unit interval graphs are naturally encoded by Dyck words (or Dyck paths).
We recall that a word $w$ in $\{U,D\}$ is a Dyck word if it contains as many $U$'s as $D$'s
and if all its prefixes have at least as many U's as D's.
A Dyck word is {\em irreducible} if all its proper prefixes have strictly more $U$'s than $D$'s.
Besides, the mirror of a Dyck word $w$ is the word $\overline w$ obtained by reading $w$ from right to left, changing $U$ into $D$ and $D$ into $U$. Finally, a Dyck word $w$ is called {\em palindromic} if $w=\overline w$.
Dyck words can be  represented as lattice paths, called Dyck paths,
by interpreting $U$'s as up steps $(1,1)$ and $D$'s as down steps $(1,-1)$,
and we will use both points of view interchangeably.
\medskip

Let us now explain the encoding of unit interval graphs by Dyck paths. 
Let $G$ be a unit interval graph, and $\mathcal I =(I_1,...,I_n)$
be an interval representation of $G$.
We write $I_j=[a_j,b_j]$, with $b_j=a_j+1$.
Assume without loss of generality that $a_1<\dots<a_n$
(and hence $b_1<\dots<b_n$).
Let us consider the natural order on the set $\{a_1,\dots,a_n,b_1,\dots,b_n\}$, 
\emph{i.e.}~we consider $c_1<\dots<c_{2n}$ such that 
\[ \{c_1\dots,c_{2n}\}=\{a_1,\dots,a_n,b_1,\dots,b_n\}.\]
We then define a Dyck path $w=(w_1,\dots,w_{2n})$ by
\begin{equation}\label{eq:def_w}
w_i=\begin{cases}
U & \text{if }c_i =a_k \text{ for some $k$;}\\
D & \text{if }c_i =b_k \text{ for some $k$.}
\end{cases}
\end{equation}
This construction is illustrated in \cref{Fig:Exemple_IntervalGraph_Dyck}.
Note that in this construction $a_i$, resp.~$b_i$, corresponds to the $i$-th up step,
resp.~$i$-th down step in $w$.
\begin{figure}[t]
\begin{center}
\includegraphics[width=13cm]{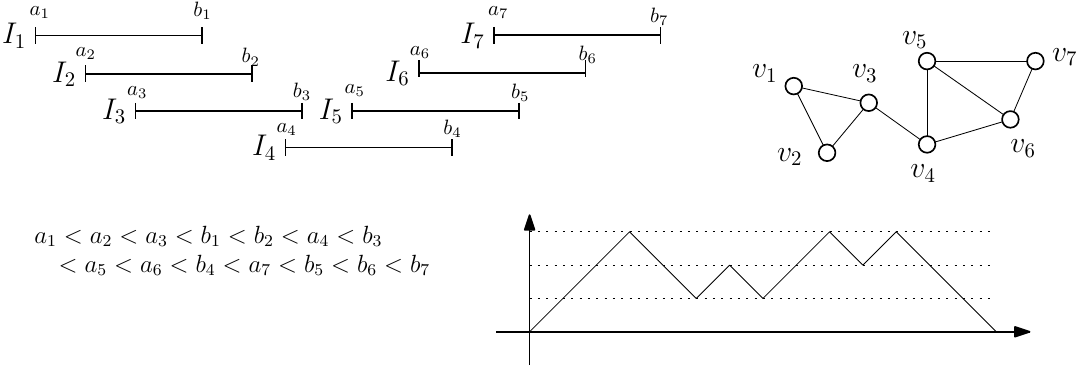}
\caption{Top: A collection $\mathcal I$ of unit intervals and the associated interval graph $G_{\mathcal I}$. Bottom: the order on the collection of starting and ending points of the intervals in $\mathcal I$ and the associated the Dyck path.}\label{Fig:Exemple_IntervalGraph_Dyck}
\end{center}
\end{figure}
\medskip

Given a Dyck path $w$, we can always find real numbers $(a_j)_{1 \le j \le n}$
 and $(b_j)_{1 \le j \le n}$ with $b_j=a_j+1$ such that \eqref{eq:def_w} holds
 (with $c_i$ the $i$-th element of the set $\{a_1,\dots,a_n,b_1,\dots,b_n\}$ in the natural order).
Moreover, all such sequences $(a_j)_{1 \le j \le n}$ and $(b_j)_{1 \le j \le n}$ 
yield the same unit interval graph, which we denote $G_w$.
However, several Dyck paths may correspond to a given unit interval graph, 
depending on the interval representation of $G$. 
In particular, it always holds that $G_w=G_{\overline w}$.

Another default of uniqueness appears when considering not connected graphs.
Let $G=G_1 \uplus G_2$ be a disjoint union of two unit interval graphs,
and let $w^{(1)}$ and $w^{(2)}$ be Dyck paths encoding $G_1$ and $G_2$.
Then both concatenations $w^{(1)} \cdot w^{(2)}$ and $w^{(2)} \cdot w^{(1)}$
are representatives of $G$.
Furthermore, it is easy to see that $G_w$ is connected if and only if $w$ is irreducible.
 
 It turns out, see \cite[Lemma 1]{saitoh2010interval}\footnote{Recall that unit interval graphs
 and proper interval graphs are the same, see \cite{bogart1999proper-unit}.},
  that mirror symmetry and disconnectedness 
are the only objections to the uniqueness of representatives.
\begin{proposition}\label{prop:uniqueness_representative_unit_interval}
If $G$ is a connected unit interval graph. Then it can be encoded
by exactly one or two (necessarily irreducible) Dyck paths. 
In the second case, the two representatives $w$
and $w'$ are mirror of each other.
\end{proposition}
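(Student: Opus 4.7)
The plan is to prove the proposition via the notion of \emph{straight enumerations} of the vertex set of $G$, namely linear orderings $v_1, \dots, v_n$ of $V_G$ such that whenever $v_iv_k \in E$ with $i < k$, we also have $v_jv_i, v_jv_k \in E$ for all $i \le j \le k$.

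First I would show that the Dyck path encodings of $G$ are in bijective correspondence with straight enumerations of $G$, viewed up to graph automorphism of $G$. Indeed, from any Dyck encoding $w$, labeling vertices by the order of up-steps (equivalently, the order of down-steps, since $a_1 < \dots < a_n$ entails $b_1 < \dots < b_n$) produces a linear ordering; the adjacency rule \enquote{vertices $i$ and $j>i$ are adjacent iff the $j$-th up-step precedes the $i$-th down-step} then directly forces the straightness property. Conversely, every straight enumeration of $G$ can be realized by a choice $a_1 < \dots < a_n$ reproducing all edges of $G$, yielding back a Dyck path. Two straight enumerations give rise to the same unlabeled Dyck path if and only if they differ by an automorphism of $G$.

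Second, I would check that connectedness of $G$ forces its Dyck encodings to be irreducible: if $w = w_1 \cdot w_2$ with $w_1$ a proper Dyck factor, then in any compatible interval realization all intervals associated with $w_1$ lie strictly to the left of those associated with $w_2$ (the right endpoints of the former all precede the left endpoints of the latter), so $G_w$ disconnects.

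The core of the argument is then to show that, up to $\mathrm{Aut}(G)$, a connected unit interval graph admits exactly one or two straight enumerations, with the two (when distinct) related by reversal. The starting observation is that in any straight enumeration of a connected $G$, consecutive vertices must be adjacent---otherwise the enumeration would split $G$ into two disconnected pieces---so $v_1 v_2 \cdots v_n$ is a Hamiltonian path and $G \setminus \{v_1\}$ remains a connected unit interval graph with induced straight enumeration $v_2, \dots, v_n$. An induction on $n$ then reduces the problem to characterizing admissible starting vertices: these are the simplicial vertices whose closed neighborhood forms an initial clique of the enumeration, and a careful analysis shows that such vertices fall into at most two $\mathrm{Aut}(G)$-orbits, interchanged by reversing the enumeration. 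Combined with the easy observation that reversing a straight enumeration corresponds precisely to replacing the associated Dyck path by its mirror, the proposition follows. The main obstacle is the uniqueness of the starting vertex up to $\mathrm{Aut}(G)$ and reversal: while intuitively $G$ has \enquote{two ends}, making this precise requires careful bookkeeping to handle the case where a nontrivial automorphism identifies the two ends, yielding a single palindromic Dyck path.
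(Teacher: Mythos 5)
The paper does not supply its own proof of this statement: it is cited directly from \cite[Lemma~1]{saitoh2010interval} (together with \cite{bogart1999proper-unit} for the equivalence of unit and proper interval graphs), so there is no in-paper argument to compare yours against. Your route --- passing through straight enumerations and arguing uniqueness of the vertex ordering up to reversal and automorphism --- is the standard one for this classical fact about proper interval graphs, and your preliminary steps are sound: the correspondence between Dyck encodings of $G$ and straight enumerations of $G$ modulo $\mathrm{Aut}(G)$ is correctly identified, and your argument that connectedness forces irreducibility is fine.

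However, as written there is a genuine gap exactly at the point you yourself flag as the ``main obstacle.'' The assertion that the admissible starting vertices fall into at most two $\mathrm{Aut}(G)$-orbits interchanged by reversal is the entire content of the proposition, and you leave it at ``a careful analysis shows.'' The induction you sketch also does not close on its own: after deleting $v_1$, the inductive hypothesis only yields that the two restricted enumerations of $G\setminus\{v_1\}$ differ by an element of $\langle\mathrm{Aut}(G\setminus\{v_1\}),\text{reversal}\rangle$; an automorphism of $G\setminus\{v_1\}$ extends to an automorphism of $G$ fixing $v_1$ only if it stabilises $N_G(v_1)$ setwise (this does hold when both enumerations start at $v_1$, since the neighbourhood of the first vertex is always the initial segment of the same fixed length, but it must be said), and you must separately rule out the case where the inductive hypothesis returns a reversal of $G\setminus\{v_1\}$ even though $\sigma(1)=\tau(1)$. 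You also assert, without verification, the converse direction of your first step, namely that every straight enumeration of $G$ is realisable by unit intervals and hence yields a Dyck path. None of these is insurmountable, but together they mean the proposal is an outline rather than a proof of the key step.
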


\subsection{Limit of random unit interval graphs: the uniform irreducible Dyck path model}
\label{ssec:limit-uniform-Dyck-path}
In this section, we determine the limit in the Gromov--Prokhorov topology of the intersection graph of a uniform irreducible Dyck path.

We emphasize that a uniform random irreducible Dyck path
 of length $2n$ is obtained from a uniform random
Dyck path of length $2n-2$ by adding an up step at the beginning
and a down step at the end.
Therefore classical asymptotic results for uniform random Dyck paths --
such as the convergence after normalization to a Brownian excursion (recalled below) --
also hold for uniform random irreducible Dyck paths.

Let ${\bm w}$ be a uniform random irreducible Dyck path of length $2n$
 and $G_{\bm w}$ be the associated unit interval graph.
%  with vertex set $V_{G_{\bm w}}=\{v_1,v_2,\dots,v_n\}$. 
 Since $\bm w$ is irreducible, the resulting graph $G_{\bm w}$ is
 connected. However,  $G_{\bm w}$ is not uniformly distributed among connected unit interval graphs with $n$ vertices. We will address this issue in \cref{ssec:interval_removing_irred_condition}.
 
Recall that $d_{G_{\bm w}}$ denotes the graph distance in $G_{\bm w}$, and $m_{V_{G_{\bm w}}}$ the uniform measure on its vertex set, denoted $V_{G_{\bm w}}$.

 \begin{theorem}
 \label{thm:unit-interval-uniform-Dyck}
  The random mm-space $(V_{G_{\bm w}},\frac{1}{\sqrt n} d_{G_{\bm w}},m_{V_{G_{\bm w}}})$
 converges in distribution in the %Gromov--Hausdorff--Prokhorov 
 Gromov--Prokhorov topology to $([0,1], \tfrac{1}{\sqrt 2} d_{\mathbb e},\Leb)$, where 
 $\mathbb e$ is a random Brownian excursion of length 1
 and $d_{\mathbb e}$ is defined by the formula:
 for $x<y$ in $[0,1]$, we have
 \[d_{\mathbb e}(x,y)=\int_x^y \frac{dt}{\mathbb e(t)}.\]
 \end{theorem}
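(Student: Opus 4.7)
The plan is to reduce to Loehr's box-distance formulation (\cref{th:Loehr}) and construct an explicit coupling. Concretely, I would define $R_n = \{(k,x) \in V_{G_{\bm w}} \times [0,1] : x \in ((k-1)/n, k/n]\}$ and let $\nu_n$ be the pushforward of Lebesgue measure on $[0,1]$ under the map $x \mapsto (\lceil nx \rceil, x)$. Then $\nu_n$ is a coupling of $m_{V_{G_{\bm w}}}$ and $\Leb$ with $\nu_n(R_n)=1$, and the problem reduces to showing that the discrepancy
\[ \disc(R_n) = \sup_{(k_1,x_1),(k_2,x_2)\in R_n} \left| \tfrac{1}{\sqrt n}\, d_{G_{\bm w}}(k_1,k_2) - \tfrac{1}{\sqrt 2}\, d_{\mathbb e}(x_1,x_2) \right| \]
tends to $0$ in probability.

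The core input is the combinatorial lemma \cref{lem:distance_unit}, which rewrites $d_{G_{\bm w}}(i,j)$ (for $i<j$) essentially as a sum $\sum_{k} 1/H_{\bm w}(k)$, where $H_{\bm w}(k)$ is a suitable height of the Dyck path $\bm w$ at a position associated to the $k$-th vertex; morally, this reflects the greedy shortest path, which at each step jumps by the local height. Paired with the classical invariance principle that $\tfrac{1}{\sqrt{2n}} H_{\bm w}(\lfloor 2n\cdot\rfloor)$ converges in distribution, uniformly in the argument, to a normalized Brownian excursion $\mathbb e$ (which I would strengthen to an almost sure uniform convergence via Skorokhod's representation theorem), a standard Riemann-sum computation should give, for fixed $0<x<y<1$,
\[ \tfrac{1}{\sqrt n}\, d_{G_{\bm w}}(\lceil nx\rceil, \lceil ny\rceil) \;\longrightarrow\; \tfrac{1}{\sqrt 2}\int_x^y \frac{dt}{\mathbb e(t)}, \]
where the factor $1/\sqrt 2$ arises from the $\sqrt{2n}$ scaling of the path height versus the $\sqrt n$ scaling of the graph distance.

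To deal with the singularity of $1/\mathbb e(t)$ at $t\in\{0,1\}$, I would truncate: for each $\delta>0$ set $R_n^\delta = R_n \cap \{(k,x) : \delta n < k < (1-\delta)n\}$, so that $1 - \nu_n(R_n^\delta) \le 2\delta + o(1)$. On $R_n^\delta$, both $\mathbb e$ and the rescaled heights are uniformly bounded below on $[\delta,1-\delta]$ (almost surely for $\mathbb e$; with high probability for $H_{\bm w}$ via the Skorokhod coupling), making the Riemann approximation uniform over pairs in $R_n^\delta$, so that $\disc(R_n^\delta)\to 0$ in probability. Then the bound $\Box \le \max(\disc(R_n^\delta),\, 1-\nu_n(R_n^\delta))$ together with letting $\delta\to 0$ yields $\Box \to 0$ in probability. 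The main obstacle will be exactly this passage from pointwise to uniform Riemann-sum convergence: since $\disc$ is a supremum, not a typical value, small heights of $\bm w$ anywhere in the truncated window could in principle blow up the local error, forcing careful quantitative control of the minimum of the rescaled height on $[\delta,1-\delta]$ and of the error terms produced by the greedy-jump representation of shortest paths in \cref{lem:distance_unit}.
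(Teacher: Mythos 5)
Your strategy is precisely the one the paper follows: reduce to L\"ohr's box-distance criterion (\cref{th:Loehr}), build the relation $R$ and coupling $\nu$ by matching vertex $v_{1+\lfloor nx\rfloor}$ with $x$, restrict to $x\in[\delta,1-\delta]$ to control the singularity of $1/\mathbb e$ at the endpoints, use \cref{lem:distance_unit} plus the invariance principle with Skorokhod representation, and let $\delta\to 0$. The factor $1/\sqrt 2$ has the origin you describe (the height of a uniform Dyck path scales like $\sqrt{2n}\,\mathbb e$, which the paper writes as $h_{\bm w}(\lfloor nx\rfloor)/\sqrt n\to\sqrt 2\,\mathbb e(x)$ after re-indexing by up steps).

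The one thing to flag is that the obstacle you correctly identify at the end is not a minor technicality but carries most of the weight: \cref{lem:distance_unit} expresses $d_{G_w}(v_i,v_j)$ as $\lceil\sum_{k=i}^{j-1}1/f_w(\max\{i_m:i_m\le k\})\rceil$, which involves the jump function $f_w$ (number of up steps between the $i$-th up step and the $i$-th down step) and an index shift to $\max\{i_m:i_m\le k\}$, not directly the height $h_{\bm w}(k)$. Bridging from this to a clean Riemann sum in $1/h_{\bm w}(k)$, uniformly over all pairs $\delta n\le i<j\le n-\delta n$, is the content of the paper's \cref{lem:dG-sumH}, whose proof occupies an entire subsection and rests on several quantitative ingredients your sketch would still need to supply: a concentration estimate showing every subinterval of $[2n]$ of length $\ge n^{0.4}$ is approximately balanced in up/down steps (\cref{lem:balanced_intervals}); from this, that $h_{\bm w}(i)/f_{\bm w}(i)$ is uniformly close to $1$ whenever either is $\ge n^{0.4}$ (\cref{corol:HSurF}); and a bound $|h_{\bm w}(k)-h_{\bm w}(\max\{i_m:i_m\le k\})|\le n^{0.45}$ with high probability (\cref{lem:diff-hauteur}), combined with a lower bound $\inf_{[\delta n,n-\delta n]}h_{\bm w}\ge n^{0.49}$ coming from the invariance principle to absorb these errors after division by $h_{\bm w}$. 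So the plan is right, but a complete proof requires making those uniform estimates explicit; without them the Riemann-sum step cannot be carried out at the level of the supremum defining $\disc$.
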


To prove \cref{thm:unit-interval-uniform-Dyck}, we will need two technical lemmas.
\smallskip

The first lemma gives an asymptotic expression for the distances
in $G_{\bm w}$ in terms of the height function of $\bm w$.
Let us consider an interval representation $(I_1,\cdots, I_n)$
of $G_{\bm w}$, with $I_j=[a_j,b_j]$, such that~\eqref{eq:def_w} holds.
We assume as before that $a_1<\dots<a_n$,
and call $v_j$ the vertex of $G_{\bm w}$ corresponding to the interval $I_j$.
Also, in the following,
$h_{\bm w}(i)$ is the arrival height of the $i$-th up step in the Dyck path $\bm w$.
The proof of the following lemma is postponed to the next section.
\begin{lemma}
\label{lem:dG-sumH}
Let ${\bm w}$ be a uniform random irreducible Dyck path of length $2n$.
Then, for any $\delta$ in $(0,1/2)$,
 the following convergence holds in probability, as $n$ tends to $+\infty$:
\[ \frac1{\sqrt n} \ \sup_{\delta n \le i <j \le n-\delta n} \,\left|\, d_{G_{\bm w}} (v_i,v_j)
- \sum_{k=i}^{j-1} \frac1{h_{\bm w}(k)} \,\right| \, \longrightarrow \, 0.\]
\end{lemma}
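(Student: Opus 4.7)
The plan is to recast the graph distance as a greedy-jump count and then use the Brownian-excursion scaling limit of the Dyck path to compare this count with the sum $\sum 1/h_{\bm w}(k)$. For a unit interval graph whose vertices are ordered by left endpoints, a classical fact is that the shortest path from $v_i$ to $v_j$ (with $i<j$) is realized by the greedy strategy ``jump from $v_k$ to the neighbor with largest index $N(k):=\max\{m:v_k\sim v_m\}$'', so that
\[
d_{G_{\bm w}}(v_i,v_j) \;=\; \min\{m\ge 0 : N^{(m)}(i) \ge j\}.
\]
Reading the definition on the Dyck path, $N(k) = k + U_k$, where $U_k$ is the number of up steps of $\bm w$ strictly between the $k$-th up step and the $k$-th down step. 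Moreover, since unit intervals close in first-in--first-out order ($b_1<\dots<b_n$ whenever $a_1<\dots<a_n$), the $k$-th up and $k$-th down steps are matched, which forces the number of down steps strictly between them to equal $h_{\bm w}(k)-1$. In particular the Dyck-time duration of $I_k$ equals $t_D^{(k)} - t_U^{(k)} = U_k + h_{\bm w}(k)$.

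The heart of the proof is the approximation $U_k \approx h_{\bm w}(k)$, uniformly in the bulk. For $k/n\in[\delta,1-\delta]$, the standard convergence of $(h_{\bm w}(\lfloor nt \rfloor)/\sqrt n)_t$ towards a positive constant multiple of a Brownian excursion $\mathbb e$ yields $h_{\bm w}(k)=\Theta(\sqrt n)$, hence $U_k+h_{\bm w}(k)=\Theta(\sqrt n)$ as well. Equivalently, in rescaled Dyck time (dividing by $2n$) the $k$-th up step and the $k$-th down step lie only $O(1/\sqrt n)$ apart. Since the height immediately after the $k$-th down step equals precisely $U_k$ (a direct FIFO bookkeeping check), applying the uniform Hölder $(\tfrac12-\eps)$ modulus of continuity of the Brownian excursion gives, with probability tending to $1$,
\[
\max_{\delta n \le k \le (1-\delta)n} \bigl| U_k - h_{\bm w}(k) \bigr| \;\le\; C\, n^{1/4+\eps} \;=\; o(\sqrt n).
\]
The almost-sure positivity of $\mathbb e$ on $[\delta,1-\delta]$ provides in addition a uniform lower bound $\min_k h_{\bm w}(k),\,\min_k U_k \ge c\sqrt n$ on the bulk.

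Once these two inputs are in place, the conclusion follows by a clean computation. Writing the greedy sequence $i=k_0<k_1<\dots<k_m$ with $k_{\ell+1}=N(k_\ell)$ and $k_{m-1}<j\le k_m$, the identity $k_{\ell+1}-k_\ell=U_{k_\ell}$ gives
\[
d_{G_{\bm w}}(v_i,v_j) \;=\; m \;=\; \sum_{\ell=0}^{m-1}\frac{k_{\ell+1}-k_\ell}{U_{k_\ell}}.
\]
The same Hölder input shows that $U_k$ varies by $o(\sqrt n)$ within a jump of length $O(\sqrt n)$, so this sum equals $\sum_{k=i}^{j-1} 1/U_k$ up to an error $o(1)$ per jump, i.e.~$o(\sqrt n)$ after summing over the $m = O(\sqrt n)$ jumps. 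Finally the bulk lower bound yields
\[
\Bigl| \sum_{k=i}^{j-1} \bigl( 1/U_k - 1/h_{\bm w}(k) \bigr) \Bigr| \;\le\; \frac{(j-i)\cdot o(\sqrt n)}{(c\sqrt n)^2} \;=\; o(\sqrt n),
\]
uniformly in $i,j$. The main obstacle is making all the above estimates uniform in $(i,j)\in[\delta n,(1-\delta)n]^2$; this will follow by combining the $L^\infty$-convergence of $(h_{\bm w}(\lfloor nt \rfloor)/\sqrt n)_t$ to $\mathbb e$ with uniform modulus-of-continuity and uniform positivity statements for $\mathbb e$ on compact subsets of $(0,1)$, all of which hold with probability tending to one.
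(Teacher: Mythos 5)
Your proposal follows the same architecture as the paper's proof -- express the distance as the greedy-jump count (your $d=m$ is \cref{lem:distance_unit}), show the greedy step lengths $U_k=f_{\bm w}(k)$ are uniformly close to $h_{\bm w}(k)$ on the bulk, and smooth the jump-constant sum against $\sum_k 1/h_{\bm w}(k)$ -- but replaces the paper's engine for the two comparison estimates. The paper derives both from \cref{lem:balanced_intervals} (Hoeffding's inequality transferred to the irreducible-Dyck-path conditioning): every sub-interval of the path of length at least $n^{0.4}$ has a proportion of up steps within $n^{-0.1}$ of $1/2$, which controls $f_{\bm w}/h_{\bm w}$ (\cref{corol:HSurF}) and the variation of $h_{\bm w}$ across a jump (\cref{lem:diff-hauteur}). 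You instead import them from the a.s.~regularity of the Brownian-excursion scaling limit: uniform convergence of $h_{\bm w}(\lfloor n\,\cdot\rfloor)/\sqrt n$ to $\sqrt 2\,\mathbb e$, plus the modulus of continuity and uniform positivity of $\mathbb e$ on $[\delta,1-\delta]$. That is a legitimately different route -- the paper's is more elementary and gives polynomial rates essentially for free, yours is conceptually cleaner but leans entirely on the limit theorem.

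Two gaps should be closed. First, the asserted bound $\max_k|U_k-h_{\bm w}(k)|\le Cn^{1/4+\eps}$ does not follow from the invariance principle alone: the Skorokhod coupling gives $\sup_t|h_{\bm w}(\lfloor nt\rfloor)/\sqrt n-\sqrt 2\,\mathbb e(t)|=\eta_n\to 0$ at an unspecified rate, and the discretization error $\eta_n\sqrt n$ may dwarf $n^{1/4+\eps}$. You only need (and can honestly prove) the weaker $\max_k|U_k-h_{\bm w}(k)|=o(\sqrt n)$ uniformly on the bulk, so this is a fixable overclaim, but as written it is unjustified. Second, $h_{\bm w}(k)$ is parametrized by up-step ordinal while $U_k$ is the path height at the Dyck time of the $k$-th down step; to apply any modulus of continuity you first need to know that moving $k$ through one greedy jump moves that Dyck time by only $O(\sqrt n)$, which is itself a balanced-interval statement. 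The paper's \cref{lem:balanced_intervals} packages exactly this; if you want to bypass Hoeffding you must explicitly invoke the Dyck-time-parametrized invariance principle (or some equivalent) rather than leave the bridge between the two parametrizations implicit.
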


The second lemma will allow  to estimate the sum $\sum_{k=i}^{j-1} \frac1{h_{\bm w}(k)}$ using the convergence
of $h_{\bm w}$ to the Brownian excursion.

\begin{lemma}
\label{lem:sumH-intE}
There exists a probability space with copies of $\bm w$ (one copy for each $n \ge 1$) and $\mathbb e$ such that
for any $\delta$ in $(0,1/2)$,
\[  \sup_{\delta n \le i <j \le n-\delta n} \,\left|\, \frac1{\sqrt n}\,\sum_{k=i}^{j-1} \frac1{h_{\bm w}(k)}
- \int_{i/n}^{j/n} \frac{dt}{\sqrt 2 \,\mathbb e(t)}\,\right| \, \stackrel{n\to+\infty}{\longrightarrow} \, 0\qquad \text{a.s.}.\]
\end{lemma}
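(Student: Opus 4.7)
The plan is to invoke a Skorokhod coupling so that the rescaled height function of $\bm w$ converges to the Brownian excursion $\mathbb{e}$ uniformly and almost surely, then to approximate $h_{\bm w}(k)$ by $\sqrt{2n}\,\mathbb{e}(k/n)$, and finally to recognize the resulting sum as a Riemann sum for $\int_{i/n}^{j/n} dt/(\sqrt{2}\,\mathbb{e}(t))$.

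Concretely, let $H_{\bm w}(m)$ denote the height of $\bm w$ at time $m$. The invariance principle recalled at the start of \cref{ssec:limit-uniform-Dyck-path} reads $\bigl(H_{\bm w}(\lfloor 2ns\rfloor)/\sqrt{2n}\bigr)_{s \in [0,1]} \Rightarrow (\mathbb{e}(s))_{s\in[0,1]}$ in $C([0,1])$, and Skorokhod's representation theorem produces a common probability space carrying all $\bm w$ (for $n \ge 1$) and $\mathbb{e}$ on which this convergence is almost sure and uniform, with some error $\varepsilon_n \to 0$ a.s. Since $\mathbb{e}$ is a.s.~strictly positive and continuous on $[\delta/2, 1-\delta/2]$, the infimum $m_\delta := \inf_{t \in [\delta/2, 1-\delta/2]} \mathbb{e}(t)$ is a.s.~positive; and because $h_{\bm w}(k) = H_{\bm w}(2k - h_{\bm w}(k))$ with $h_{\bm w}(k) = O(\sqrt n)$ a.s., the time $m_k := 2k - h_{\bm w}(k)$ of the $k$-th up step differs from $2k$ by $O(\sqrt n)$, uniformly for $k \in [\delta n, (1-\delta)n]$. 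Combining with the a.s.~uniform continuity of $\mathbb{e}$ on $[\delta/2, 1-\delta/2]$ yields, uniformly in $k \in [\delta n, (1-\delta) n]$,
\[
\eta_n := \sup_k \bigl| h_{\bm w}(k)/\sqrt{2n} - \mathbb{e}(k/n) \bigr| \le \varepsilon_n + \omega_{\mathbb{e}}(C/\sqrt n) \longrightarrow 0 \quad \text{a.s.},
\]
where $\omega_{\mathbb{e}}$ is the modulus of continuity of $\mathbb{e}$ on $[\delta/2, 1-\delta/2]$.

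The conclusion will then follow from two uniform error estimates. First, since $h_{\bm w}(k) \ge \sqrt{2n}\,m_\delta/2$ for $n$ large,
\[
\left|\tfrac{1}{h_{\bm w}(k)} - \tfrac{1}{\sqrt{2n}\,\mathbb{e}(k/n)}\right|
= \frac{|h_{\bm w}(k) - \sqrt{2n}\,\mathbb{e}(k/n)|}{h_{\bm w}(k)\,\sqrt{2n}\,\mathbb{e}(k/n)} \le \frac{C'\,\eta_n}{\sqrt n},
\]
so summing at most $n$ such terms and dividing by $\sqrt n$ contributes $O(\eta_n) \to 0$, uniformly in $i, j$. Second, $\frac{1}{\sqrt 2\, n}\sum_{k=i}^{j-1} 1/\mathbb{e}(k/n)$ is a Riemann sum for $\int_{i/n}^{j/n} dt/(\sqrt{2}\,\mathbb{e}(t))$, and since $1/\mathbb{e}$ is a.s.~uniformly continuous on $[\delta, 1-\delta]$, the difference is bounded by $\omega_{1/\mathbb{e}}(1/n) \to 0$, again uniformly in $i, j$. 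The main obstacle is thus not a single deep estimate but the careful uniformity of every error over $i, j \in [\delta n, (1-\delta) n]$; the restriction to the compact subinterval $[\delta, 1-\delta]$ is precisely what makes this possible, as it bounds $\mathbb{e}$ away from $0$ almost surely and renders $1/\mathbb{e}$ uniformly continuous there.
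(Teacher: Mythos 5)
Your proof is correct and follows essentially the same strategy as the paper: Skorokhod coupling gives a.s.~uniform convergence of the rescaled height, and then a two-step triangle inequality (replace $1/h_{\bm w}(k)$ by $1/(\sqrt{2n}\,\mathbb e(k/n))$, then a Riemann-sum approximation), both controlled by the a.s.~positivity and uniform continuity of $\mathbb e$ on $[\delta,1-\delta]$. The only minor deviation is that the paper obtains the uniform convergence of $h_{\bm w}(\lfloor nx\rfloor)/\sqrt n$ to $\sqrt 2\,\mathbb e(x)$ directly from the height-function theory of conditioned Galton--Watson trees (its \cref{eq:conv_hauteur}), whereas you re-derive it from the invariance principle for the Dyck path itself via the step-to-time reparametrization $m_k=2k-h_{\bm w}(k)$ and the modulus of continuity of $\mathbb e$.
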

\begin{proof}
We first claim that 
\begin{equation}\label{eq:conv_hauteur}
\frac{h_{\bm w}( \lfloor nx \rfloor )}{\sqrt n}
\stackrel{(d)}{\longrightarrow} \sqrt 2 \, \mathbb e(x)
\end{equation}
uniformly for all $x \in [0,1]$.
Indeed, via the classical correspondence between Dyck paths and plane trees,
for $\bm w$ a uniform Dyck path of length $2n$, the function $h_{\bm w}$ can be interpreted as the height function
of a uniform random plane tree with $n$ vertices, which corresponds to a conditioned
Galton-Watson tree with geometric offspring distribution of parameter $1/2$
(whose standard deviation is $\sigma= \sqrt 2$).
It is known, see \emph{e.g.}~\cite[Theorem 1.15]{LeGallRandomTrees},
that such a height fonction, correctly renormalized, converges to $\frac2{\sigma} \mathbb e(x)$.
The convergence holds in distribution in Skorokhod space;
however, when the limit is continuous, convergence in Skorokhod space
is equivalent to uniform convergence \cite[p. 124]{BillingsleyConv}. 
As explained above, this immediately transfers to the case where $\bm w$ is a uniform irreducible Dyck path of length $2n$. 

Using Skorokhod representation theorem, there exists a probability space with copies of $\bm w$ % (one for each $n \ge 1$)
 and $\mathbb e$ on which convergence in \cref{eq:conv_hauteur} holds almost surely. We now work on this probability space.

Fix now $\delta\in (0,1/2)$,
\begin{align}\label{eq:sum_1/h-1/e}
  \sup_{\delta n \le i <j \le n-\delta n} \,\left|\, \frac1{\sqrt n}\,\sum_{k=i}^{j-1} \frac1{h_{\bm w}(k)}
- \frac1{n}  \, \sum_{k=i}^{j-1} \frac1{\sqrt 2 \,\mathbb e(k/n)} \, \right|
&\leq \sup_{\delta n \le i <j \le n-\delta n} \,\frac{1}{n}  \sum_{k=i}^{j-1} \bigg| \frac{\sqrt n}{h_{\bm w}(k)}-   \frac1{\sqrt 2 \,\mathbb e(k/n)} \bigg| \nonumber\\
&\le \frac1{n}\,\sum_{k=\delta n}^{n-\delta n-1} \bigg| \frac{\sqrt n}{h_{\bm w}(k)}- \frac1{\sqrt 2 \,\mathbb e(k/n)} \bigg|.
\end{align}
Since $\mathbb e$ is a.s.~positive
on $[\delta,1-\delta]$, \cref{eq:conv_hauteur} implies
$\displaystyle{\frac{\sqrt n}{h_{\bm w}( \lfloor nx \rfloor )} \stackrel{\text{a.s.}}{\to} \frac1{\sqrt 2 \,\mathbb e(x)}}$
uniformly for $x$ in $[\delta,1-\delta]$.
This implies that, a.s.,
\begin{equation}\label{eq:sum_1/h-1/e-to-0}  \frac1{n}\,\sum_{k=\delta n}^{n-\delta n-1} \bigg| \frac{\sqrt n}{h_{\bm w}(k)}- \frac1{\sqrt 2 \,\mathbb e(k/n)} \bigg| \, \longrightarrow \, 0.
\end{equation}
Moreover, for $\delta n \le i <j \le n-\delta n$, we have
\begin{multline*}
\left| \frac1{n}  \, \sum_{k=i}^{j-1} \frac1{\sqrt 2 \,\mathbb e(k/n)}
- \int_{i/n}^{j/n} \frac{dt}{\sqrt 2 \,\mathbb e(t)} \, \right| 
\le \int_{i/n}^{j/n} \left| \frac1{\sqrt 2 \,\mathbb e(\lfloor tn \rfloor/n)} - \frac{1}{\sqrt 2 \,\mathbb e(t)}  \right| dt \\
\le 
\sup_{x,y:\, \delta \le x,y \le 1-\delta \atop |x-y| \le 1/n} \left| \frac1{\sqrt 2 \,\mathbb e(x)} - \frac1{\sqrt 2 \,\mathbb e(y)} \right|.
\end{multline*}
Almost surely, it holds that $t \mapsto \frac1{\sqrt 2 \,\mathbb e(t)}$ is a continuous function on the interval $[\delta,1-\delta]$, and thus is uniformly continuous. The above upper bound therefore tends to $0$ as $n$ tends to $+\infty$.
Since this bound is independent from $i$ and $j$ (subject to the constraint 
 $\delta n \le i <j \le n-\delta n$) we can take the supremum over $i$ and $j$
 and conclude that
\begin{equation}\label{eq:Riemann-approx-1/e}
\sup_{\delta n \le i <j \le n-\delta n} \,\left| \frac1{n}  \, \sum_{k=i}^{j-1} \frac1{\sqrt 2 \,\mathbb e(k/n)}
- \int_{i/n}^{j/n} \frac{dt}{\sqrt 2 \,\mathbb e(t)} \, \right| \, \longrightarrow \, 0.
\end{equation}
Bringing \cref{eq:sum_1/h-1/e,eq:sum_1/h-1/e-to-0,eq:Riemann-approx-1/e} together concludes the proof of \cref{lem:sumH-intE}.
\end{proof}

\begin{proof}[Proof of  \cref{thm:unit-interval-uniform-Dyck}]
Let us write $X_n=(V_{G_{\bm w}},\frac{1}{\sqrt n} d_{G_{\bm w}},m_{V_{G_{\bm w}}})$
and $X_\infty=([0,1],\tfrac{1}{\sqrt 2} d_{\mathbb e},\Leb)$. 
For proving the Gromov--Prokhorov convergence of $X_n$ to $X_\infty$ the strategy is to use \cref{th:Loehr}. For this purpose we introduce on $X_n \times X_\infty$ a relation  $R_{n,\delta}$ and a distribution $\nu$ which allow to bound the box distance $\Box(X_n,X)$ (see \cref{def:BoxDistance}).

Fix $\delta>0$.
Let $R_{n,\delta} \subseteq X_n \times X_\infty$ be the relation given by $R_{n,\delta} := \{ (v_{1+\lfloor xn \rfloor},x), \, \delta \le x \le 1-\delta\}$,  where $v_k$ denotes, as before, the vertex of $G_{\bm w}$ corresponding to the $k$-th interval of an interval representation of $G_{\bm w}$.
Let also $\nu$ be the distribution of $(v_{1+\lfloor nU \rfloor},U)$
where $U$ is uniform in $[0,1]$. Since $1+\lfloor nU \rfloor$ is uniform in $[n]$ the first marginal of $\nu$ is $m_{V_{G_{\bm w}}}$, so that $\nu$ is a coupling between $m_{V_{G_{\bm w}}}$ and $\Leb$.
By construction we have $\nu(R_{n,\delta}) = 1-2\delta$.

The discrepancy (see again \cref{def:BoxDistance}) of $R_{n,\delta}$ is equal to
\[ \disc(R_{n,\delta}) = \sup_{(x_1,x'_1), (x_2,x'_2) \in R_{n,\delta}} 
\left| \frac{1}{\sqrt n}\, d_{G_{\bm w}}(x_1,x_2) - \frac{1}{\sqrt 2}\, d_{\mathbb e}(x'_1,x'_2) \right|.\]
We have 
\begin{align*}
\disc(R_{n,\delta}) &= \sup_{\delta \le  x<y  \le 1-\delta} \,\left|\, 
\frac{1}{\sqrt n}\, d_{G_{\bm w}}(v_{1+\lfloor xn \rfloor},v_{1+\lfloor yn \rfloor}) 
- \int_{x}^{y} \frac{dt}{\sqrt 2 \,\mathbb e(t)}\,\right| \\
&\le  \sup_{1+\lfloor n\delta \rfloor \le i<j  \le 1+\lfloor n(1-\delta)\rfloor} \,\left|\, 
\frac{1}{\sqrt n}\, d_{G_{\bm w}}(v_i,v_j) 
- \int_{i/n}^{j/n} \frac{dt}{\sqrt 2 \,\mathbb e(t)}\,\right| + \frac{2}{n} \sup_{t \in [\delta,1-\delta]} \frac{1}{\sqrt 2 \,\mathbb e(t)}.
\end{align*}
From \cref{lem:dG-sumH,lem:sumH-intE},
we know that the first summand of this upper bound tends to $0$ in probability as $n$ tends to $+\infty$.
The second one tends to $0$ as well, since $ \sup_{t \in [\delta,1-\delta]} \frac{1}{\sqrt 2 \,\mathbb e(t)}$
is a random variable independent of $n$. We conclude that $\disc(R_{n,\delta})$ tends in probability to $0$.

By definition of the box distance, 
$$
\Box(X_n,X_\infty) \le \max(\disc(R_{n,\delta}),2\delta),
$$
so that $\mathbb P( \Box(X_n,X_\infty) > 2\delta) \le \mathbb P( \disc(R_{n,\delta}) > 2\delta)$.
 The latter tends to $0$ since $\disc(R_{n,\delta})$ tends in $0$ in probability, and therefore,
 $\mathbb P(\Box(X_n,X_\infty) > 2\delta)$ tends to $0$.
 This holds for any $\delta>0$,
i.e.~$X_n$ tends to $X_\infty$ in probability
for the box distance, in the probability space constructed in
\cref{lem:sumH-intE}.
We conclude that in the original probability space, $X_n$ tends to $X_\infty$ in distribution for the Gromov--Prokhorov topology,
as wanted.
\end{proof}

\subsection{Proof of \cref{lem:dG-sumH} }
\label{ssec:preuve-lemme-distance}

Fix an irreducible Dyck path $w$ of length $2n$. 
We start by explaining how to compute distances in $G_w$.
We consider as usual an interval representation of $G_w$, denoted $\mathcal I=(I_1,...,I_n)$, with $I_j = [a_j,b_j]$ for all $i$, and $a_1 < a_2 < \dots < a_n$. 
We also denote $v_j$ the vertex of $G_w$ represented by $I_j$. 
For $i \le n$, we let
$f_w(i)$ be the number of up steps between the $i$-th up step (excluded) and the $i$-th down step in $w$.
Note that $f_w(i)>0$ for all $i<n$ since $w$ is irreducible.
Recall that in the correspondance between the Dyck path $w$ and the unit interval graph $G_w$,
the $i$-th up step and the $i$-th down step in $w$ corresponds to the bound $a_i$ and $b_i$ of the interval $I_i$. Hence, by definition, $f_w(i)$ is the maximal $k$ such that
the interval $I_{i+k}$ starts before the end of $I_i$, \emph{i.e.}~it is the maximal $k$ such that $v_i$ and $v_{i+k}$ are connected in $G_{w}$.
This property allows to compute distances in $G_w$ using the function $f_w$ (see \cref{Fig:Notation_LemmaDistances}).
\begin{lemma}
\label{lem:distance_unit}
Let $w$ be an irreducible Dyck path of length $2n$ and take $i<j$ in $[n]$.
Define $i_0=i$ and recursively $i_{m+1}=i_m+f_w(i_m)$ until $i_m \ge j$.
One has
\begin{equation}\label{eq:dGw}
 d_{G_w}(v_i,v_j)
= \left\lceil \sum_{k=i}^{j-1} \frac{1}{f_w(\max\{i_m: i_m \le k\})}  \right\rceil,
\end{equation}
where $\lceil x \rceil$ is the smallest integer greater than or equal to $x$.
\end{lemma}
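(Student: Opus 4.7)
My plan is to identify $d_{G_w}(v_i,v_j)$ with the greedy step count $m$ defined by the recursion $i_0=i$, $i_{p+1}=i_p+f_w(i_p)$, $i_m\ge j$, and then check separately that the ceiling of the displayed sum equals $m$.

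For the upper bound I would exhibit the greedy path $v_i=v_{i_0}\sim v_{i_1}\sim\cdots\sim v_{i_{m-1}}\sim v_j$. Each edge is valid: by the very definition of $f_w$ recalled just above the lemma, the neighbors of $v_a$ with index greater than $a$ are exactly $v_{a+1},\dots,v_{a+f_w(a)}$, so $v_{i_p}\sim v_{i_{p+1}}$ holds by construction of the sequence, and $v_{i_{m-1}}\sim v_j$ because $j\le i_m=i_{m-1}+f_w(i_{m-1})$.

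For the matching lower bound I would prove by induction on $k$ that any vertex $v_s$ with $d_{G_w}(v_i,v_s)\le k$ satisfies $s\le i_k$ (extending the recursion for all $p\ge 0$). The crucial ingredient is the monotonicity property $s\le s'\Rightarrow s+f_w(s)\le s'+f_w(s')$, which is transparent in the unit interval representation $I_k=[a_k,a_k+1]$: since $a_s\le a_{s'}$ forces $b_s\le b_{s'}$ (all intervals having the same length $1$), and since $s+f_w(s)=\max\{k:a_k\le b_s\}$, increasing $s'$ can only enlarge the set considered. Given a walk $v_i=u_0\sim\cdots\sim u_k=v_s$ with $u_{k-1}=v_{s'}$, the induction hypothesis yields $s'\le i_{k-1}$; the edge $v_{s'}\sim v_s$ then gives either $s\le s'\le i_{k-1}\le i_k$ (when $s\le s'$), or $s\le s'+f_w(s')\le i_{k-1}+f_w(i_{k-1})=i_k$ (when $s'<s$, using the monotonicity). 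Since $j>i_{m-1}$ by minimality of $m$, this yields $d_{G_w}(v_i,v_j)\ge m$.

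Finally, I would compute the sum by grouping the indices $k\in\{i,\dots,j-1\}$ into the blocks $\{i_p,\dots,i_{p+1}-1\}$ for $p\le m-2$ and $\{i_{m-1},\dots,j-1\}$; on each such block the quantity $\max\{i_q:i_q\le k\}$ is constant, equal respectively to $i_p$ and $i_{m-1}$, so the sum collapses to
\[\sum_{p=0}^{m-2}\frac{i_{p+1}-i_p}{f_w(i_p)}+\frac{j-i_{m-1}}{f_w(i_{m-1})}=(m-1)+\frac{j-i_{m-1}}{f_w(i_{m-1})},\]
and the inequalities $i_{m-1}<j\le i_m=i_{m-1}+f_w(i_{m-1})$ place the total in $(m-1,m]$, whose ceiling is $m$, as required. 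The main obstacle I anticipate is the monotonicity lemma $s\mapsto s+f_w(s)$, which is precisely where the unit-length hypothesis enters; without it the greedy lower-bound argument collapses. Everything else is elementary bookkeeping.
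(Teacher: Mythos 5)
Your proof is correct and follows essentially the same route as the paper: both arguments hinge on the optimality of the greedy forward-jumping path and then verify that the displayed formula counts the greedy steps. The only noticeable divergence is in how greedy optimality is justified. The paper asserts the monotonicity of the distance function, $d_{G_w}(v_i,v_j)\ge d_{G_w}(v_k,v_j)$ for $i\le k\le j$, directly from the interval picture and then reduces to a recursion that both sides of \eqref{eq:dGw} satisfy, leaving the verification to the reader; you instead isolate the reach-monotonicity $s\le s'\Rightarrow s+f_w(s)\le s'+f_w(s')$ (a clean consequence of the intervals having equal length) and run an explicit induction showing that every vertex at distance $\le k$ from $v_i$ has index at most $i_k$, after which you evaluate the sum by blocks to land in $(m-1,m]$. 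Your version is somewhat more self-contained since it spells out the reachability induction and the block computation rather than appealing to an unproven recursive characterization, but the underlying mechanism is the same.
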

\begin{figure}
\begin{center}
\includegraphics[width=13cm]{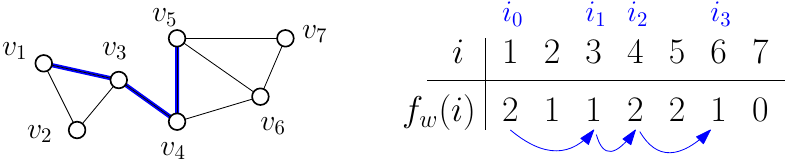}
\caption{Illustration of \cref{lem:distance_unit} and its notation. Left: the unit interval graph associated to the Dyck path $w$ of \cref{Fig:Exemple_IntervalGraph_Dyck}, and a shortest path from $v_1$ to $v_5$. Right: the corresponding function $f_w$, and how to read on it that $ d_{G_w}(v_1,v_5)=\lceil 1/f_w(i_0)+  1/f_w(i_0)+ 1/f_w(i_1)+ 1/f_w(i_2) \rceil = \lceil 1/2+ 1/2+ 1/1+ 1/2 \rceil =3$. }\label{Fig:Notation_LemmaDistances}
\end{center}
\end{figure}

\begin{proof}
It is clear from the interval representation of $G_w$ that for any $i \le k \le j$ we have
\[ d_{G_w}(v_i,v_j) \ge d_{G_w}(v_k,v_j).\]

Hence finding a shortest path from $v_i$ to $v_j$ ($i<j$) can be realized by the following gready procedure:
\begin{itemize}
\item if $v_i$ is connected to $v_j$, we have a path of length $1$;
\item otherwise, we find the neighbor of $v_i$ with greatest label,
which is $v_{i+f_w(i)}$ as explained above. We take the edge $\{v_i,v_{i+f_w(i)}\}$,
concatenated with a shortest path from $v_{i+f_w(i)}$ to $v_j$ built recursively by the same procedure.
\end{itemize}
In terms of distance, this yields (for $i<j$) 
\[ d_{G_w}(v_i,v_j) = \begin{cases}
1 &\text{ if $j \le i +f_w(i)$;}\\
1+d_{G_w}(v_{i+f_w(i)},v_j)&\text{ otherwise.}
\end{cases}\]
It is easy to verify that the right-hand side of \eqref{eq:dGw} satisfies the same recursive characterization, proving the lemma. (The integer part guarantees that the formula is true even if $i_m > j$.)
\end{proof}

Let us now consider a uniform random irreducible Dyck path $\bm w$ of length $2n$.
Our goal is to show that $f_{\bm w}(\max\{i_m: i_m \le k\})$ is close to $h_{\bm w}(k)$.
We first show that $f_{\bm w}(i)$ and $h_{\bm w}(i)$ are typically close to each other.
We start with a classical concentration type results for Dyck paths.
\begin{lemma}\label{lem:balanced_intervals}
Let $\bm w$ be a uniform random irreducible Dyck path of length $2n$.
Fix $\eps_n=n^{-0.1}$. Then, with probability tending to $1$,
for all intervals $I \subset  [2n]$
 of size at least $n^{0.4}$, the proportion of up steps in $I$
 lies in $[\frac12 - \eps_n; \frac12 + \eps_n]$.
\end{lemma}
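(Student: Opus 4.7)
The plan is to reduce the statement to a concentration bound for a uniform random shuffle of $n$ up steps and $n$ down steps, in which the number of up steps in a fixed interval is a hypergeometric variable with mean $k/2$ and hence sharply concentrated.

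First I would exploit the fact that irreducible Dyck paths of length $2n$ are exactly the words of the form $U \cdot v \cdot D$ with $v$ an arbitrary Dyck path of length $2n-2$. Under the ``shuffle model'', in which one draws a uniform random word with exactly $n$ up and $n$ down steps (there are $\binom{2n}{n}$ of those), the probability of landing on an irreducible Dyck path equals $C_{n-1}/\binom{2n}{n} = \Theta(n^{-3/2})$. Letting $E_n$ denote the bad event of the lemma (existence of an interval $I \subset [2n]$ of size at least $n^{0.4}$ whose proportion of up steps lies outside $[\tfrac12 - \eps_n, \tfrac12 + \eps_n]$), we thus have
\[ \mathbb{P}(\bm w \in E_n) \;\le\; O(n^{3/2}) \cdot \mathbb{P}_{\mathrm{sh}}(E_n), \]
where $\mathbb{P}_{\mathrm{sh}}$ refers to the shuffle model. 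It therefore suffices to show $\mathbb{P}_{\mathrm{sh}}(E_n) = o(n^{-3/2})$.

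Next, for each fixed interval $I \subset [2n]$ of size $k \ge n^{0.4}$, the number $X_I$ of up steps in $I$ under the shuffle model is hypergeometric with parameters $(2n, n, k)$ and mean $k/2$. Hoeffding's inequality for sampling without replacement gives
\[ \mathbb{P}_{\mathrm{sh}}\bigl(|X_I - k/2| \ge k\eps_n\bigr) \;\le\; 2\exp(-2k\eps_n^2) \;\le\; 2\exp(-2n^{0.2}), \]
where the last inequality uses $k \ge n^{0.4}$ together with $\eps_n = n^{-0.1}$. Since there are at most $\binom{2n+1}{2} \le 4n^2$ intervals in $[2n]$, a union bound yields $\mathbb{P}_{\mathrm{sh}}(E_n) \le 8n^2 \exp(-2n^{0.2})$, and multiplying by the $O(n^{3/2})$ factor from the shuffle-to-irreducible-Dyck reduction still gives a super-polynomially small bound, so $\mathbb{P}(\bm w \in E_n) \to 0$ as claimed.

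There is no serious obstacle: the only mildly delicate point is to pick a concentration inequality for hypergeometric variables whose exponential decay absorbs both the polynomial $n^2$ union-bound factor and the polynomial $n^{3/2}$ conditioning factor. The exponents $-0.1$ and $0.4$ in the statement are chosen precisely so that $k\eps_n^2 \ge n^{0.2}$, making the Hoeffding bound comfortably stretched-exponential.
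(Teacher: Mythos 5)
Your proof is correct and uses essentially the same strategy as the paper: pass to an unconditioned model where the up/down steps are exchangeable, pay a polynomial factor for the conditioning on being an irreducible Dyck path, and then absorb that factor and the polynomial union-bound factor into a stretched-exponential Hoeffding bound. The one difference is the choice of ambient model: the paper works with a uniform random word over \emph{all} $2^{2n}$ binary strings (so each interval count is $\mathrm{Binom}(s,1/2)$, and the conditioning factor is $\Theta(n^{3/2})$, coming from $C_{n-1}\cdot 2^{-2n}$), whereas you work with the uniform random \emph{balanced} word (so each interval count is hypergeometric, and you invoke Hoeffding for sampling without replacement). Both are fine, and the hypergeometric tail is in fact at least as small as the binomial one, so your route is at least as tight.

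One small slip: under the balanced shuffle model, $\mathbb{P}_{\mathrm{sh}}(\text{irreducible Dyck}) = C_{n-1}/\binom{2n}{n} = \frac{1}{2(2n-1)} = \Theta(n^{-1})$, not $\Theta(n^{-3/2})$. (You seem to have imported the $\Theta(n^{-3/2})$ figure from the unbalanced Bernoulli model.) This is a benign error in the favorable direction --- the true conditioning factor $O(n)$ is smaller than the $O(n^{3/2})$ you used, so the bound you derived is still valid --- but the stated asymptotic for $C_{n-1}/\binom{2n}{n}$ is wrong and should be corrected.
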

\begin{proof}

Let $\tilde{\bm w}$ be a uniform random binary word of length $2n$. Starting from the standard estimate $\mathbb{P}(\tilde{\bm w}\text{ is an irreducible Dyck path})= \frac1n \binom{2n-2}{n-1} \times 2^{-2n}
% \frac{\mathrm{Catalan}_{n-1}}{2^{2n}}
=\Theta(n^{-3/2})$, we get that, for any event $E_n$,
$$
\mathbb{P}(\bm w\in E_n)=\mathcal{O}\left(n^{3/2}\mathbb{P}(\tilde{\bm w}\in E_n)\right).
$$
Let $E_n$ be the following event:
there exists an interval $I \subset [2n]$ of size at least $n^{0.4}$
such that the proportion of up steps in $I$
 is not in $[\tfrac12 - \eps_n; \tfrac12 + \eps_n]$.
By the union bound
\begin{align*}
 \mathbb{P}\left(\tilde{\bm w}\in E_n\right)&\leq 
 (2n)^2  \max_{I \subset [2n]\,\text{s.t.}\,|I|\geq n^{0.4} }\mathbb{P}\left(\text{the proportion of 'up' in $I$ is not in $[\tfrac12 - \eps_n; \tfrac12 + \eps_n]$}\right)\\
 &\leq (2n)^2 \max_{s\geq n^{0.4} }\mathbb{P}\left(\left|\mathrm{Binom}(s,1/2)-s/2 \right| > s\eps_n\right)\\
 &\leq (2n)^2 \max_{s\geq n^{0.4} }2\exp\left(-2 s\eps_n^2\right),\text{ using the Hoeffding inequality}\\
 &\leq 8 n^2 \exp\left(-2 n^{0.4-0.2}\right).
\end{align*}
%At the third line we used the Hoeffding inequality.
This proves that $\mathbb{P}(\bm w\in E_n)\leq e^{-cn^\alpha}$ for some $c,\alpha>0$, hence concluding the proof.
\end{proof}
\begin{corollary}
\label{corol:HSurF}
Let $\bm w$ be a uniform random irreducible Dyck path of length $2n$.
Fix $\eps_n=n^{-0.1}$.
With probability tending to $1$, the following holds.
For all $i$ such that either $h_{\bm w}(i) \ge n^{0.4}$ or $f_{\bm w}(i) \ge n^{0.4}$,
the quotient $\frac{h_{\bm w}(i)}{f_{\bm w}(i)}$ belongs to $[1-5\eps_n;1+5\eps_n]$.
\end{corollary}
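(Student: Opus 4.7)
The plan is to apply \cref{lem:balanced_intervals} to the portion of $\bm w$ running from the $i$-th up step to the $i$-th down step, and then translate the resulting balance on up-steps into the desired bound on $h_{\bm w}(i)/f_{\bm w}(i)$.

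First I would record the key combinatorial identity. Let $p_1<p_2$ denote the positions in $\bm w$ of the $i$-th up step and the $i$-th down step respectively. Counting up/down steps on $[1,p_1]$ gives height $h_{\bm w}(i)=2i-p_1$, so $p_1=2i-h_{\bm w}(i)$; counting up steps on $[1,p_2]$ in two ways (as $p_2-i$ using down-steps, and as $i+f_{\bm w}(i)$ by definition) gives $p_2=2i+f_{\bm w}(i)$. Hence the closed interval $[p_1,p_2]$ has length $L:=f_{\bm w}(i)+h_{\bm w}(i)+1$ and contains exactly $f_{\bm w}(i)+1$ up steps.

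Next I would work on the high-probability event $E_n$ provided by \cref{lem:balanced_intervals}. Whenever $\max(h_{\bm w}(i),f_{\bm w}(i))\ge n^{0.4}$ we have $L\ge n^{0.4}$, so on $E_n$ the proportion of up steps in $[p_1,p_2]$ satisfies $\bigl|(f_{\bm w}(i)+1)/L-\tfrac12\bigr|\le\eps_n$, which rearranges into
\begin{equation*}
|f_{\bm w}(i)-h_{\bm w}(i)|\;\le\;2\eps_n\bigl(f_{\bm w}(i)+h_{\bm w}(i)+1\bigr)+1. \qquad (\star)
\end{equation*}

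Finally I would turn $(\star)$ into the ratio estimate. A quick inspection of $(\star)$ rules out the case where one of $h_{\bm w}(i),f_{\bm w}(i)$ is $\ge n^{0.4}$ while the other is $<n^{0.4}/2$: the left-hand side would then be $\ge n^{0.4}/2$ while the right-hand side is $O(\eps_n n^{0.4})=O(n^{0.3})$, impossible for $n$ large. So both quantities are $\ge n^{0.4}/2$. Dividing $(\star)$ by $f_{\bm w}(i)$ and writing $r:=h_{\bm w}(i)/f_{\bm w}(i)$ gives
\begin{equation*}
|r-1|\;\le\;2\eps_n\, r+2\eps_n+\frac{2\eps_n+1}{f_{\bm w}(i)},
\end{equation*}
in which the last summand is $O(n^{-0.4})=o(\eps_n)$, hence $\le\eps_n/10$ for $n$ large. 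For $r\ge 1$ this yields $r(1-2\eps_n)\le 1+2.1\eps_n$ and therefore $r\le 1+5\eps_n$ for $n$ large; for $r<1$ one directly gets $1-r\le 2\eps_n+2.1\eps_n\le 5\eps_n$. In both cases $|r-1|\le 5\eps_n$, as claimed.

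The only genuine content is the combinatorial identity relating $h_{\bm w}(i)$, $f_{\bm w}(i)$ and the up-step balance in $[p_1,p_2]$; once that identity is in place, the remainder is a routine application of \cref{lem:balanced_intervals} together with elementary bookkeeping to obtain the explicit constant $5$.
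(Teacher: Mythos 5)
Your proof takes essentially the same route as the paper's: apply \cref{lem:balanced_intervals} to the sub-walk of $\bm w$ between the $i$-th up step and the $i$-th down step, then convert the balance of up steps in that sub-walk into a ratio bound for $h_{\bm w}(i)/f_{\bm w}(i)$. The only cosmetic difference is your choice of the \emph{closed} interval $[p_1,p_2]$, which introduces a $+1$ and pushes you through $(\star)$ and a case distinction. The paper instead considers the sub-walk from the $i$-th up step \emph{excluded} to the $i$-th down step; that sub-walk has exactly $f_{\bm w}(i)$ up steps and $h_{\bm w}(i)$ down steps, so the balance condition $\tfrac{f_{\bm w}(i)}{f_{\bm w}(i)+h_{\bm w}(i)}\in[\tfrac12-\eps_n,\tfrac12+\eps_n]$ converts directly into $\tfrac{h_{\bm w}(i)}{f_{\bm w}(i)}\in[\tfrac{1-2\eps_n}{1+2\eps_n},\tfrac{1+2\eps_n}{1-2\eps_n}]\subset[1-5\eps_n,1+5\eps_n]$ for $n$ large, with no auxiliary case analysis.

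One imprecision to flag in your write-up: when you rule out the case where one of $h_{\bm w}(i),f_{\bm w}(i)$ is $\ge n^{0.4}$ and the other is $<n^{0.4}/2$, you assert that the right-hand side of $(\star)$ is $O(\eps_n n^{0.4})$. That is not justified, since neither $f_{\bm w}(i)$ nor $h_{\bm w}(i)$ is a priori bounded by a multiple of $n^{0.4}$ (they may be of order $n$), so $f_{\bm w}(i)+h_{\bm w}(i)+1$ may be much larger than $n^{0.4}$. The intended contradiction does hold, but via a slightly different estimate: rewrite $(\star)$ (say in the case $f_{\bm w}(i)>h_{\bm w}(i)$) as $f_{\bm w}(i)(1-2\eps_n)\le h_{\bm w}(i)(1+2\eps_n)+2\eps_n+1$; then $h_{\bm w}(i)<n^{0.4}/2$ gives $f_{\bm w}(i)<n^{0.4}$ for $n$ large, contradicting $f_{\bm w}(i)\ge n^{0.4}$ (and symmetrically when the roles of $f$ and $h$ are exchanged). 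The rest of your argument then goes through.
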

\begin{proof}
We observe that, by definition of $h_{\bm w}(i)$,
 there are $i-h_{\bm w}(i)$ down steps before the $i$-th up step of $w$.
 Hence there are $h_{\bm w}(i)$ down steps between the $i$-th up step (excluded) and the $i$-th down step of ${\bm w}$ (included). By definition, the number of up steps in the same interval is $f_{\bm w}(i)$.
 Hence if either $h_{\bm w}(i) \ge n^{0.4}$ or $f_{\bm w}(i) \ge n^{0.4}$,
 this interval has length at least $n^{0.4}$ and \cref{lem:balanced_intervals} applies.
 We get that $\frac{f_{\bm w}(i)}{f_{\bm w}(i) + h_{\bm w}(i)}$ belongs to $[\frac12 - \eps_n; \frac12 + \eps_n]$. 
 Elementary manipulations then imply that, for $n$ large enough, $\frac{ h_{\bm w}(i)}{f_{\bm w}(i)}$ belongs to $[1 - 5\eps_n; 1 + 5\eps_n]$, 
 concluding the proof of the lemma.
\end{proof}

\begin{corollary}
\label{corol:dG-SumH}
Let $\bm w$ be a uniform random Dyck path of length $2n$. For any $\delta$ in $(0,1/2)$,
 we have the following convergence in probability:
\[ \frac1{\sqrt n} \ \sup_{\delta n \le i <j \le n-\delta n} \,\left|\, d_{G_{\bm w}} (v_i,v_j)
- \sum_{k=i}^{j-1} \frac1{h_{\bm w}(\max\{i_m: i_m \le k\})} \,\right| \, \longrightarrow \, 0.\]
\end{corollary}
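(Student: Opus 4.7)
The plan is to combine \cref{lem:distance_unit} with \cref{corol:HSurF} in order to replace $f_{\bm w}$ by $h_{\bm w}$ inside the greedy formula for $d_{G_{\bm w}}$. Abbreviate $J_k := \max\{i_m : i_m \le k\}$, and set $S_{\bm w}(i,j) := \sum_{k=i}^{j-1} 1/f_{\bm w}(J_k)$ and $\tilde S_{\bm w}(i,j) := \sum_{k=i}^{j-1} 1/h_{\bm w}(J_k)$ (the target quantity). \Cref{lem:distance_unit} gives $d_{G_{\bm w}}(v_i,v_j) = \lceil S_{\bm w}(i,j) \rceil$, so
\[
|d_{G_{\bm w}}(v_i,v_j) - \tilde S_{\bm w}(i,j)| \,\le\, 1 + |S_{\bm w}(i,j) - \tilde S_{\bm w}(i,j)|,
\]
and the additive $1$ contributes negligibly after dividing by $\sqrt n$. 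The heart of the argument is therefore the estimate of $|S_{\bm w} - \tilde S_{\bm w}|$.

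First, I would establish a uniform bulk lower bound: with probability tending to $1$, $h_{\bm w}(k) \ge c\sqrt n$ for all $k \in [\delta n, n-\delta n]$, for some random $c > 0$. This follows from the uniform convergence $h_{\bm w}(\lfloor nx \rfloor)/\sqrt n \to \sqrt 2\,\mathbb e(x)$ (established in the proof of \cref{lem:sumH-intE}) together with the almost sure positivity of $\mathbb e$ on $[\delta,1-\delta]$. On this good event, $h_{\bm w}(k) \gg n^{0.4}$, so \cref{corol:HSurF} applies at every $k$ in the bulk and yields $|1/f_{\bm w}(k) - 1/h_{\bm w}(k)| \le 5\eps_n / f_{\bm w}(k)$. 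Since the indices $i_m$ appearing in the definition of $S_{\bm w}(i,j)$ satisfy $\delta n \le i \le i_m < j \le n-\delta n$, summing the pointwise estimate over $k$ from $i$ to $j-1$ gives $|S_{\bm w}(i,j) - \tilde S_{\bm w}(i,j)| \le 5\eps_n\, S_{\bm w}(i,j)$, uniformly in $(i,j)$.

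It then remains to prove the \emph{a priori} bound $S_{\bm w}(i,j) = O(\sqrt n)$ on the same good event, uniformly in $(i,j)$. Let $N$ denote the number of greedy steps; by inspection $S_{\bm w}(i,j) \in [N-1, N]$, since every summand of $\sum_m (\min(i_{m+1},j) - i_m)/f_{\bm w}(i_m)$ equals $1$ except possibly the last. Moreover $i_{N-1} - i_0 = \sum_{m=0}^{N-2} f_{\bm w}(i_m) \le j-i$. On the good event, \cref{corol:HSurF} gives $f_{\bm w}(i_m) \ge h_{\bm w}(i_m)/(1+5\eps_n) \ge c\sqrt n /2$ for $n$ large enough, so $N - 1 \le 2(j-i)/(c\sqrt n) = O(\sqrt n)$. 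Combining everything, $|d_{G_{\bm w}}(v_i,v_j) - \tilde S_{\bm w}(i,j)| \le 1 + O(\eps_n\sqrt n) = O(n^{0.4})$, uniformly in $(i,j) \in [\delta n, n-\delta n]^2$, whence the claimed convergence in probability after dividing by $\sqrt n$.

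The main obstacle I anticipate is the uniform-in-$(i,j)$ control of the greedy step count $N$: this is what forces me to obtain the lower bound on $h_{\bm w}$ \emph{simultaneously} for all $k$ in the entire bulk $[\delta n, n-\delta n]$, rather than merely pointwise. Once that uniform bulk bound is in place, the substitution of $f_{\bm w}$ by $h_{\bm w}$ and the $O(\sqrt n)$ estimate of $S_{\bm w}$ both follow immediately, and the conclusion cascades from the two preceding lemmas.
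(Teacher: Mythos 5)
Your proof is correct and uses the same core ingredients as the paper: the ceiling formula of \cref{lem:distance_unit} (contributing an $O(1)$ error that is negligible after dividing by $\sqrt n$), the uniform bulk lower bound on $\inf_{\delta n \le \ell \le n-\delta n} h_{\bm w}(\ell)$ of order $\sqrt n$ (from the convergence of the normalized height process to the Brownian excursion), and \cref{corol:HSurF} to convert between $f_{\bm w}$ and $h_{\bm w}$. Where you differ from the paper is in the final bookkeeping of the error. You bound $|S_{\bm w}(i,j) - \tilde S_{\bm w}(i,j)|$ multiplicatively, as $O(\eps_n)\, S_{\bm w}(i,j)$, and then argue $S_{\bm w}(i,j)=O(\sqrt n)$ by estimating the number of greedy steps $N$ — which is exactly the step you anticipated as the main obstacle. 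The paper sidesteps any counting of greedy steps: it simply bounds the sum by (number of summands) times (pointwise supremum), i.e.\ by $n\,\sup_\ell |1/f_{\bm w}(\ell) - 1/h_{\bm w}(\ell)|$, and then shows $\sqrt n\,\sup_\ell |1/f_{\bm w}(\ell) - 1/h_{\bm w}(\ell)| \to 0$ in probability via the elementary factorization $|1/f - 1/h| = (1/h)\,|h/f - 1|$, together with $\inf_\ell h_{\bm w}(\ell) \gtrsim \sqrt n$ and $|h/f-1|\le 5\eps_n$. Both routes yield the same $O(n^{-0.1})$ rate; yours is valid but carries an extra step that the paper shows can be dispensed with. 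One cosmetic point: your phrase ``with probability tending to $1$, $h_{\bm w}(k)\ge c\sqrt n$ for all $k\in[\delta n, n-\delta n]$, for some random $c>0$'' is vacuous as stated, since a random $c$ can always be chosen to make the inequality hold. What you need (and what your argument actually uses) is that for any $\eta>0$ there is a deterministic $c>0$ with $\mathbb P(\inf_{\ell} h_{\bm w}(\ell)\ge c\sqrt n)\ge 1-\eta$ for $n$ large, i.e.\ the stochastic boundedness of $\sqrt n/\inf_{\ell} h_{\bm w}(\ell)$ — which does follow from the uniform convergence to $\sqrt 2\,\mathbb e$ and the a.s.\ positivity of $\mathbb e$ on $[\delta,1-\delta]$.
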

\begin{proof}
It is enough to check that, for any $\delta$ in $(0,1/2)$,
 we have the following convergence in probability:
\begin{equation}
\label{eq:estimate-1surH-1surF} \sup_{\delta n \le \ell \le n -\delta n} \sqrt n \left| \frac1{f_{\bm w}(\ell)}- \frac1{h_{\bm w}(\ell)}\right| \, \longrightarrow \, 0.
\end{equation}
Indeed, if this holds, it suffices to use \cref{eq:dGw}
and to sum the above estimate for $\ell=\ell(k)=\max\{i_m: i_m \le k\}$
for $k$ in $\{i,\cdots,j-1\}$.

The left-hand side of \eqref{eq:estimate-1surH-1surF} rewrites as
\[ \sup_{\delta n \le \ell \le n -\delta n} \left( \frac{\sqrt n}{h_{\bm w}(\ell)} \left| \frac{h_{\bm w}(\ell)}{f_{\bm w}(\ell)} -1 \right| \right)
\le  \frac{\sqrt n}{\inf_{\delta n \le \ell \le n -\delta n} h_{\bm w}(\ell)}
\cdot \sup_{\delta n \le \ell \le n -\delta n} \left| \frac{h_{\bm w}(\ell)}{f_{\bm w}(\ell)} -1 \right|.\]
Recall from \eqref{eq:conv_hauteur} that $\frac{1}{\sqrt n} h_{\bm w}(\lfloor nx \rfloor)$ converges in distribution to $\sqrt{2}\,\mathbb e(x)$.
We then have
\[\frac{\sqrt n}{\inf_{\delta n \le \ell \le n -\delta n} h_{\bm w}(\ell)}
\to \frac{1}{\inf_{x \in [\delta,1-\delta]} \sqrt 2\,\mathbb e(x)},\]
in distribution, as $n$ tends to $+\infty$. Note that the right-hand-side
is a.s.~finite since the Brownian excursion does not vanish in $[\delta,1-\delta]$.
Moreover, with probability tending to $1$, we have that $h_{\bm w}(\ell) \ge n^{0.4}$
for all $\ell$ in  $[\delta n,n-\delta n]$. Thus we can apply \cref{corol:HSurF},
and we get, that with probability tending to $1$
\[\sup_{\delta n \le \ell \le n -\delta n} \left| \frac{h_{\bm w}(\ell)}{f_{\bm w}(\ell)} -1 \right| \le 5 \eps_n =5 n^{-0.1}.\]
Bringing everything together proves \cref{eq:estimate-1surH-1surF},
and thus \cref{corol:dG-SumH}.
\end{proof}

\begin{lemma}\label{lem:diff-hauteur}
The following holds with probability tending to $1$.
For any $i\le k<j$, we have
\[|h_{\bm w}(k) - h_{\bm w}(\max\{i_m: i_m \le k\})| \le n^{0.45}.\]
\end{lemma}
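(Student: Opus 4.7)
The plan is to exploit two facts already established: by definition, $\ell := \max\{i_m : i_m \le k\}$ satisfies $k - \ell < f_{\bm w}(\ell)$, and by \cref{corol:HSurF}, $f_{\bm w}(\ell)$ and $h_{\bm w}(\ell)$ are within a factor $1 \pm 5\eps_n$ of each other as soon as $\max(f_{\bm w}(\ell), h_{\bm w}(\ell)) \ge n^{0.4}$. Combined with the tightness of $\max_\ell h_{\bm w}(\ell)/\sqrt n$, this yields that the positions $P_\ell, P_k$ of the $\ell$-th and $k$-th up steps in $\bm w$ satisfy $P_k - P_\ell = O(n^{0.51})$ in the large regime; the almost-balancedness of up and down steps on such an interval (from \cref{lem:balanced_intervals}) then forces $|h_{\bm w}(k) - h_{\bm w}(\ell)|$ to be small.

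I would condition on the intersection of the event of \cref{lem:balanced_intervals} (which also implies \cref{corol:HSurF}) and the event $E_2 := \{\max_\ell h_{\bm w}(\ell) \le n^{0.51}\}$, which has probability tending to $1$ thanks to \cref{eq:conv_hauteur} and the a.s.~finiteness of $\sqrt 2 \sup \mathbb e$. Fix $i \le k < j$, set $\ell := \max\{i_m : i_m \le k\}$, and assume $k > \ell$ (the case $k=\ell$ being trivial). Since $k - \ell < f_{\bm w}(\ell)$, the $k$-th up step of $\bm w$ lies strictly before its $\ell$-th down step, so the number of down steps between positions $P_\ell$ and $P_k$ is at most $h_{\bm w}(\ell) - 1$, giving
\[ P_k - P_\ell \le (k - \ell) + (h_{\bm w}(\ell) - 1) < f_{\bm w}(\ell) + h_{\bm w}(\ell). \]

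I would then distinguish two cases. If $\max(f_{\bm w}(\ell), h_{\bm w}(\ell)) \le n^{0.4}$, the crude bound $|h_{\bm w}(k) - h_{\bm w}(\ell)| \le P_k - P_\ell \le 2 n^{0.4}$ suffices. Otherwise, \cref{corol:HSurF} yields $f_{\bm w}(\ell) \le (1 + 5\eps_n) h_{\bm w}(\ell)$, whence $P_k - P_\ell \le 3 h_{\bm w}(\ell) \le 3 n^{0.51}$ on $E_2$. If moreover $P_k - P_\ell \ge n^{0.4}$, \cref{lem:balanced_intervals} applies to the interval $(P_\ell, P_k]$: setting $L := P_k - P_\ell$ and $u := k - \ell$, one has $u/L \in [\tfrac12 - \eps_n, \tfrac12 + \eps_n]$, and the elementary identity $h_{\bm w}(k) - h_{\bm w}(\ell) = 2u - L$ yields $|h_{\bm w}(k) - h_{\bm w}(\ell)| \le 2 \eps_n L \le 6 n^{0.41}$; if $P_k - P_\ell < n^{0.4}$, one concludes as in the first case. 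All three bounds are smaller than $n^{0.45}$ for $n$ large enough.

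The proof is a case analysis rather than a hard estimate, so no step is really an obstacle; the only subtlety is to choose the exponent $0.51$ slightly above $1/2$ so that $\eps_n \cdot n^{0.51}$ stays well below $n^{0.45}$, and to handle separately the ``small'' regime $\max(f_{\bm w}(\ell), h_{\bm w}(\ell)) \le n^{0.4}$ where \cref{corol:HSurF} does not apply.
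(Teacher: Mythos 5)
Your proof is correct and follows essentially the same strategy as the paper's: bound the position gap $P_k-P_\ell$ polynomially by $o(\sqrt n\,\log n)$-type bounds on $h$ and $f$, then split into a short-interval case (trivial) and a long-interval case where \cref{lem:balanced_intervals} controls the imbalance. One minor difference: to bound the interval length, the paper bounds the number of up steps by $\sup f_{\bm w}$ and then invokes \cref{lem:balanced_intervals} once more to control the number of down steps, whereas you observe directly that the $k$-th up step precedes the $\ell$-th down step, so the interval contains at most $h_{\bm w}(\ell)-1$ down steps. Your observation is a little cleaner and sidesteps a second (slightly delicate) appeal to the balanced-intervals lemma, but the overall argument and the final case analysis are the same.
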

\begin{proof}
Set $k':=\max\{i_m: i_m \le k\}$ and let $I$ be the interval between the $k'$-th up step (excluded) and the $k$-th up step (included).

We first bound the length of this interval.
Since $h_{\bm w}$ converges after normalization in space
 by $\sqrt n$ to the Brownian excursion, with probability tending to 1, it holds that 
\[ \sup_{i\le n} h_{\bm w}(i) \le n^{0.52}. \]
Using \cref{corol:HSurF}, we get that, with probability tending to 1, 
\[ \sup_{i\le n} f_{\bm w}(i) \le n^{0.53}. \]
By construction of the sequence $(i_m)$, the number of up-steps in $I$ is bounded as follows:
\[ |k-k'| \le f_{\bm w}(k') \le 
 \sup_{i\le n} f_{\bm w}(i) \le n^{0.53},\]
 where the last inequality holds with probability tending to $1$.
 By \cref{lem:balanced_intervals}, with probability tending to 1,
 the number of down steps satisfy a similar inequality up to a factor tending to $1$. We conclude that the inequality $|I| \le n^{0.54}$ holds with probability tending to $1$.
 
 We now observe that $h_{\bm w}(k) - h_{\bm w}(k')$
    is the difference between the number of up and down steps in 
    the interval $I$, and we distinguish two cases.
 \begin{itemize}
 \item If $|I| \le n^{0.45}$, then trivially
 \[ |h_{\bm w}(k) - h_{\bm w}(k')| 
    \le | I | \le  n^{0.45}. \]
    \item Otherwise, $n^{0.45} \le |I| \le n^{0.54}$.
    By \cref{lem:balanced_intervals}, with probability tending to 1, we have
    \[ |h_{\bm w}(k) - h_{\bm w}(k')| 
    \le 2 \eps_n |I| \le 2n^{-0.1} n^{0.54} \le  
    n^{0.45}. \qedhere\]
 \end{itemize}
\end{proof}

\begin{proof}[Proof of \cref{lem:dG-sumH}]
Fix $\delta>0$. Using again the convergence of $h_{\bm w}$
to the Brownian excursion,
with probability tending to 1, we have
\[\inf_{\ell \in [\delta n,n-\delta n]} h_{\bm w}(\ell) \ge n^{0.49}.\]
Thus (using also \cref{lem:diff-hauteur}), with probability tending to 1,
for any $i\le k<j$ in $[\delta n,n-\delta n]$,
\[ \left| \frac1{h_{\bm w}(k)} - \frac1{h_{\bm w}(\max\{i_m: i_m \le k\})}\right|
 \le \frac{ |h_{\bm w}(k) - h_{\bm w}(\max\{i_m: i_m \le k\})|}{h_{\bm w}(k) \, h_{\bm w}(\max\{i_m: i_m \le k\})} \le \frac{n^{0.45}}{n^{0.98}} =n^{-0.53}.\]
 Summing over $k$, we get that, with probability tending to $1$,
 \[ \frac{1}{\sqrt n} \sup_{\delta n \le i <j \le n-\delta n} \sum_{k=i}^{j-1} \left| \frac1{h_{\bm w}(k)} - \frac1{h_{\bm w}(\max\{i_m: i_m \le k\})}\right| \le n^{-0.03}.\]
 Together with \cref{corol:dG-SumH}, this yields
 \[ \frac1{\sqrt n} \ \sup_{\delta n \le i <j \le n-\delta n} \,\left|\, d_{G_{\bm w}} (v_i,v_j)
- \sum_{k=i}^{j-1} \frac1{h_{\bm w}(k)} \,\right| \, \longrightarrow \, 0,\]
 as wanted.
\end{proof}

\subsection{From uniform irreducible Dyck paths to uniform unit interval graphs}
\label{ssec:interval_removing_irred_condition}
Let $\bm G_n$ be a uniform (possibly disconnected)
unit interval graph with $n$ vertices.
The goal of this section is to prove that $\bm G_n$ has the same
Gromov-Prokhorov limit as that found for $G_{\bm w}$ in \cref{thm:unit-interval-uniform-Dyck}.
As a first step, we prove the result 
for a uniform {\em connected} unit interval graph ${\bm C_n}$  with $n$ vertices.
In the sequel, we use $\dTV(\mu,\nu)$ 
for the total variation distance between probability measures $\mu$ and $\nu$,
and by extension, for random variables $X$ and $Y$,
we write $\dTV(X,Y)$ for the total variation distance between their laws.

\begin{lemma}
\label{lem:dTV-Gw-Cn}
Let $\bm w$ be a uniform irreducible Dyck path of length $2n$ and $\bm C_n$
be a uniform {\em connected} unit interval graph with $n$ vertices. 
It holds that 
\[\lim_{n \to +\infty} \dTV(G_{\bm w},\bm C_n) =0.\]
\end{lemma}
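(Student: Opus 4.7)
The plan is to compute the total variation distance essentially explicitly, using the unique-representation result \cref{prop:uniqueness_representative_unit_interval}, and then check that the ``defect'' from $2$-to-$1$ (namely the palindromic representatives) is negligible. Let $i_n$ denote the number of irreducible Dyck paths of length $2n$, $c_n$ the number of connected unit interval graphs with $n$ vertices, and $p_n$ the number of such graphs whose unique representative $w$ is palindromic, i.e.\ $w=\overline w$. By \cref{prop:uniqueness_representative_unit_interval}, each connected unit interval graph has either $1$ representative (if it is palindromic-type) or $2$ (a mirror pair), so
\[ i_n = p_n + 2(c_n - p_n) = 2c_n - p_n.\]

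Computing the distributions on the common ground set of connected unit interval graphs, $G_{\bm w}$ assigns mass $1/i_n$ to each palindromic-type graph and $2/i_n$ to each non-palindromic-type one, while $\bm C_n$ assigns mass $1/c_n$ to every graph. Using $1/c_n - 1/i_n = (c_n - p_n)/(c_n i_n)$ and $2/i_n - 1/c_n = p_n/(c_n i_n)$ (both nonnegative), I would obtain the closed form
\[ \dTV(G_{\bm w}, \bm C_n) = \frac{1}{2}\bigg[p_n \cdot \frac{c_n-p_n}{c_n i_n} + (c_n - p_n)\cdot \frac{p_n}{c_n i_n}\bigg] = \frac{p_n(c_n - p_n)}{c_n\, i_n} \le \frac{p_n}{i_n}. \]

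It then remains to show $p_n / i_n \to 0$, which is an elementary enumeration. Removing the initial $U$ and final $D$ from an irreducible Dyck path of length $2n$ gives a bijection onto Dyck paths of length $2n-2$, so $i_n = C_{n-1} = \Theta(4^n / n^{3/2})$. For $p_n$, a palindromic Dyck path $w$ of length $2n$ is entirely determined by $w_1 \cdots w_n$: the palindrome condition forces the height function to satisfy $h_i = h_{2n-i}$, so the second half automatically ends at $0$ and stays nonnegative. Irreducibility then becomes the condition $w_1 = U$ together with $h_2, \dots, h_n \ge 1$, so $p_n$ equals the number of $\pm 1$ paths of length $n-1$ starting at $0$ and staying nonnegative, i.e.\ $\binom{n-1}{\lfloor (n-1)/2 \rfloor} = \Theta(2^n/\sqrt n)$. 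Consequently $p_n / i_n = O(n/2^n)$, which vanishes (exponentially fast), concluding the proof.

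No step is expected to be hard: \cref{prop:uniqueness_representative_unit_interval} already reduces the probabilistic question to a finite counting problem, and the only mild subtlety is the enumeration of palindromic irreducible Dyck paths, which reduces in one line to counting ballot-type paths via the symmetry $h_i = h_{2n-i}$.
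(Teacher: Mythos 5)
Your proposal is correct and follows essentially the same route as the paper's proof: both reduce the problem via \cref{prop:uniqueness_representative_unit_interval} to showing that the proportion of palindromic irreducible Dyck paths among all irreducible Dyck paths vanishes, and both compute that proportion as $\binom{n-1}{\lfloor(n-1)/2\rfloor}\big/\frac1n\binom{2n-2}{n-1}\to 0$. The only (minor) difference is that you spell out the exact total-variation formula $\dTV = p_n(c_n-p_n)/(c_n i_n)\le p_n/i_n$ rather than just asserting, as the paper does, that it suffices to show the probability of $\bm C_n$ having a unique pre-image tends to zero; your version is a touch more explicit, but it is the same argument.
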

\begin{proof}
Let us consider the map $w \mapsto G_w$ mapping {\em irreducible} Dyck paths
of length $2n$
to connected unit interval graphs with $n$ vertices.
From \cref{prop:uniqueness_representative_unit_interval},
each connected unit interval graph has either 1 or 2 pre-images.
The lemma will follow if we show that the probability that $\bm C_n$
has exactly 1 pre-image tends to $0$ as $n$ tends to $+\infty$.

But a connected unit interval graph has exactly 1 pre-image $w$ if and only if $w$ is irreducible and palindromic. 
Moreover
\[\frac{ \#\left\{
\begin{tabular}{c}palindromic irreducible \\Dyck paths of length $2n$ \end{tabular}
\right\} }{ \#\left\{
\begin{tabular}{c} irreducible Dyck\\ paths of length $2n$ \end{tabular}
\right\} } = 
\frac{ \#\left\{
\begin{tabular}{c} Dyck prefixes \\ of length $n-1$ \end{tabular}
\right\} }{ \#\left\{
\begin{tabular}{c}  Dyck paths \\ of length $2n-2$ \end{tabular}
\right\} }
=\frac{\displaystyle \binom{n-1}{\lfloor (n-1)/2 \rfloor}}{\displaystyle \frac1n \binom{2n-2}{n-1}},\]
where the enumeration for Dyck prefixes can be found, 
\emph{e.g.}, in \cite[Ex.$6.19$, p.$219$]{Sta99}.
The right-hand side obviously tends to $0$, ending the proof of the lemma.
\end{proof}

\begin{corollary}
\label{corol:convergenceC_n}
\cref{thm:unit-interval-uniform-Dyck} holds 
true with $\bm C_n$ instead of $G_{\bm w}$.\medskip
\end{corollary}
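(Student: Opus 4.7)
The plan is to deduce the corollary from \cref{thm:unit-interval-uniform-Dyck} and \cref{lem:dTV-Gw-Cn} via the standard principle that vanishing total variation distance preserves convergence in distribution in a Polish space. Since the plan is short and essentially bookkeeping, I will describe it in one long paragraph.

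First I would introduce the (deterministic, hence measurable) map
\[ \Phi: G \longmapsto \bigl( V_G, \tfrac{1}{\sqrt{|V_G|}}\, d_G,\, m_{V_G} \bigr), \]
from finite connected graphs to mm-spaces. Since total variation distance cannot increase under pushforward by a measurable map, \cref{lem:dTV-Gw-Cn} gives
\[ \dTV\bigl( \Phi(G_{\bm w}),\, \Phi(\bm C_n) \bigr) \;\le\; \dTV(G_{\bm w},\bm C_n) \;\longrightarrow\; 0. \]
Next I would invoke the elementary fact that, on any Polish space (here $(\MGP,\DGP)$, which is Polish by the discussion in \cref{sec:DefGP}), convergence in total variation implies convergence in distribution, so that any bounded continuous test function $\phi$ satisfies
\[ \bigl| \esper[\phi(\Phi(\bm C_n))] - \esper[\phi(\Phi(G_{\bm w}))] \bigr| \;\le\; 2\,\|\phi\|_\infty \,\dTV\bigl( \Phi(G_{\bm w}), \Phi(\bm C_n) \bigr) \;\longrightarrow\; 0. \]
By \cref{thm:unit-interval-uniform-Dyck}, $\Phi(G_{\bm w})$ converges in distribution in the GP topology to $X_\infty := ([0,1], \tfrac{1}{\sqrt 2}\, d_{\mathbb e}, \Leb)$, i.e.\ $\esper[\phi(\Phi(G_{\bm w}))] \to \esper[\phi(X_\infty)]$. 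Combining the two displayed limits yields $\esper[\phi(\Phi(\bm C_n))] \to \esper[\phi(X_\infty)]$ for every bounded continuous $\phi$, which is exactly the GP convergence of $\Phi(\bm C_n)$ to $X_\infty$.

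There is no real obstacle here: all the analytic work has already been done in establishing \cref{thm:unit-interval-uniform-Dyck} (the scaling limit of $G_{\bm w}$) and the combinatorial work in \cref{lem:dTV-Gw-Cn} (comparing irreducible Dyck paths to connected unit interval graphs). The only conceptual point worth emphasising is that total variation is the ``right'' coupling to use, because convergence in total variation is preserved by measurable deterministic functions and automatically implies convergence in distribution regardless of the target topology, so one does not need to verify any continuity of $\Phi$ with respect to the GP topology.
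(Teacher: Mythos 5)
Your proof is correct and takes the route the paper clearly intends (the paper leaves the corollary unproved, treating it as immediate from \cref{lem:dTV-Gw-Cn}). The key facts you use — that total variation distance is non-increasing under pushforward by a deterministic measurable map, and that vanishing total variation distance to a sequence converging in distribution implies convergence in distribution to the same limit — are both standard, and your remark that this sidesteps any need to check GP-continuity of the graph-to-mm-space map is exactly the right observation.
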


We now consider a uniform (possibly disconnected)
unit interval graph $\bm G_n$ with $n$ vertices.
We use the standard notation $X_n=\mathcal{O}_{\mathbb{P}}(1)$ to say that a sequence
of random variables $X_n$ is stochastically bounded\footnote{\ \emph{i.e.} for every $\eps>0$ there exist constants $k_\eps,n_\eps$ such that for $n\geq n_\eps$ one has $\mathbb{P}(|X_n|\leq k_\eps)\geq 1-\eps$.}.
\begin{lemma}
\label{lem:Gn-large-components}
Let $\bm G_n$ be as above and let $L_n$ be the size of its largest connected component.
Then $n-L_n=\mathcal{O}_{\mathbb{P}}(1)$.
\end{lemma}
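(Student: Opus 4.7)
The plan is to analyze the decomposition of $\bm G_n$ into its (multiset of) connected components. Writing $c_n$ (resp.\ $g_n$) for the number of connected (resp.\ arbitrary) unit interval graphs with $n$ vertices, Proposition~\ref{prop:uniqueness_representative_unit_interval} combined with the fact that irreducible Dyck paths of length $2n$ are equinumerous with Dyck paths of length $2n-2$ yields $c_n \sim \kappa \cdot 4^n/n^{3/2}$ for some constant $\kappa > 0$; the palindromic correction is negligible, by the same estimate already used in the proof of Lemma~\ref{lem:dTV-Gw-Cn}.

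Next I would derive the asymptotics of $g_n$ via singularity analysis. The multiset construction yields the generating function identity
\[G(z) := \sum_{n \geq 0} g_n z^n = \prod_{k \geq 1} (1-z^k)^{-c_k} = \exp\!\left( \sum_{m \geq 1} \frac{C(z^m)}{m}\right), \qquad C(z) := \sum_{n \geq 1} c_n z^n.\]
The asymptotics for $c_n$ translate into a square-root expansion $C(z) = C(1/4) - B\sqrt{1-4z} + O(1-4z)$ near $z = 1/4$, with $B > 0$. Since $(1/4)^m < 1/4$ for $m \geq 2$, the series $\sum_{m \geq 2} C(z^m)/m$ is analytic in a neighborhood of $z = 1/4$, so $G$ inherits the same singularity type, namely
\[G(z) = G(1/4)\bigl(1 - B\sqrt{1-4z} + O(1-4z)\bigr).\]
The standard transfer theorems \cite[Chapter~VI]{Violet} then give $g_n \sim G(1/4)\, c_n$, and in particular $G(1/4)$ is finite.

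The conclusion follows from a direct counting step. For $n > 2k$, any unit interval graph on $n$ vertices whose largest-component size $m$ is at least $n-k$ satisfies $m > n-m$, so this largest component is unique in the multiset decomposition. Hence
\[\#\{G : |G|=n,\ L(G) \geq n-k\} = \sum_{j=0}^{k} c_{n-j}\, g_j \qquad (\text{with } g_0 := 1).\]
Dividing by $g_n$ and using $c_{n-j}/g_n \to 4^{-j}/G(1/4)$ (immediate from $c_n \sim \kappa \cdot 4^n/n^{3/2}$ together with $g_n \sim G(1/4) c_n$), I obtain
\[\lim_{n \to \infty} \mathbb P\bigl(L_n \geq n - k\bigr) = \frac{1}{G(1/4)} \sum_{j=0}^{k} 4^{-j}\, g_j.\]
This limit tends to $1$ as $k \to \infty$ because $\sum_{j \geq 0} 4^{-j}\, g_j = G(1/4)$, which is exactly the tightness of $n - L_n$.

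The most delicate step is the singularity analysis yielding $g_n \sim G(1/4)\, c_n$: one must carefully argue that the contributions of $C(z^m)/m$ for $m \geq 2$ assemble into an analytic factor at $z = 1/4$ and that the resulting square-root singularity is amenable to the transfer theorems. Once this is in place, the rest is routine bookkeeping within the standard subcritical multiset scheme for unlabelled structures.
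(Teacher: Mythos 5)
Your proof is correct and reaches the same conclusion, but it takes a genuinely different route from the paper. The paper, after setting up the same multiset generating-function identity $G(z)=\exp(\sum_{m\ge 1} C(z^m)/m)$ and observing the square-root singularity of $C$ at $\rho=1/4$, concludes by invoking Gourdon's general theorem on the largest component of random combinatorial structures -- and even has to remark that Gourdon's theorem, as stated, treats a univariate $F(w)$ and needs to be extended to the bivariate $F(w,z)$ that arises here. You instead give a direct, self-contained argument: the exact counting identity $\#\{G: |V_G|=n,\ L(G)\ge n-k\}=\sum_{j=0}^{k}c_{n-j}\,g_j$ (valid for $n>2k$ because a component of size $>n/2$ is unique), combined with elementary singularity analysis ($\exp(C(z))$ has a square-root singularity at $1/4$, the factor $\exp(\sum_{m\ge 2}C(z^m)/m)$ is analytic there, transfer gives $g_n\sim G(1/4)\,c_n$), and then the telescoping observation $\sum_{j\ge 0}4^{-j}g_j=G(1/4)$. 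This sidesteps the bivariate gap entirely; the price is a slightly longer argument, and the paper's route also yields the full limiting distribution of $n-L_n$ (a strictly stronger statement), whereas yours directly establishes tightness, which is all the lemma asserts.

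One minor point of phrasing: you say that the asymptotics $c_n\sim\kappa\,4^n/n^{3/2}$ ``translate into'' the singular expansion $C(z)=C(1/4)-B\sqrt{1-4z}+O(1-4z)$. That implication does not hold in general; coefficient asymptotics do not determine the nature of the singularity. What actually gives you the singular expansion here is the explicit decomposition $C(z)=\tfrac12 I(z)+\tfrac12 P(z)$ with $I(z)=\tfrac12(1-\sqrt{1-4z})$ and $P(z)$ analytic on a disk of radius $1/2>1/4$. You should cite that decomposition (as the paper does) rather than the coefficient asymptotics as the source of the square-root expansion; the rest of the argument is sound.
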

\begin{proof}
Let $C(z)$ and $G(z)$ be the ordinary generating series of connected and general unit interval graphs with respect to the number of vertices.
Since a general graph is a multiset of connected graphs, using \cite[Theorem 1.1]{Violet},
we have
\[G(z) = \exp\left( \sum_{k \ge 1} \frac{1}{k} C(z^k) \right).\]
We write this as $G(z)=F(C(z),z)$ where
\[F(w,z) = \exp(w) \cdot \exp\left( \sum_{k \ge 2} \frac{1}{k} C(z^k)\right).\]
From \cref{prop:uniqueness_representative_unit_interval}, we get that
\[C(z) = \tfrac{1}{2} I(z) +\tfrac{1}{2} P(z),\]
where $I(z)$ and $P(z)$ are respectively the series of irreducible Dyck paths 
and of palindromic irreducible Dyck paths.
Since irreducible Dyck paths of length $2n+2$ are in one-to-one correspondence with Dyck paths of length $2n$, 
the series $I(z)$ has radius of convergence $\rho=1/4$ and a square-root singularity.
Moreover, 
\[P(z)=1+ \sum_{n \ge 1} \binom{n-1}{\lfloor (n-1)/2 \rfloor} z^n\]
  has radius of convergence $1/2$. 
Therefore $C(z)$ has a square-root singularity at $\rho=1/4$. 
In particular, it is of {\em algebraic-logarithmic type}, 
as defined in \cite[Definition 1]{Gourdon}.
Therefore we can apply \cite[Theorem 1]{Gourdon} -- in this reference,
the author only considers the case where the function $F$ depends on one variable $w$,
but his proof readily extends to the case of a bivariate function $F(w,z)$, provided that
it is analytic at $(w,z)=(C(\rho),\rho)$.
We get that $n-L_n$ converges to a discrete law, proving that it is stochastically bounded.
\end{proof}

We can now prove the main result of this section, whose statement we recall:

\medskip

\noindent{\bf \cref{thm:unit-interval}}
\emph{ Let $\bm G_n$ be a uniform random unit interval graph with $n$ vertices.
  The random mm-space $(\bm G_n,\frac{1}{\sqrt n} d_{\bm G_n},m_{V_{\bm G_n}})$
 converges in distribution in the %Gromov--Hausdorff--Prokhorov 
 Gromov--Prokhorov topology to $\linebreak ([0,1], \tfrac{1}{\sqrt 2} d_{\mathbb e},\Leb)$.
}
 \begin{proof}
 We let $\bm G^0_n$ be the largest connected component of $\bm G_n$.
 From \cref{lem:Gn-large-components}, $\bm G^0_n$ has size $L_n=n-\mathcal{O}_{\mathbb{P}}(1)$. 
 Moreover, conditioned to $L_n$, $\bm G^0_n$ is a uniform connected unit interval graph
 with $L_n$ vertices.
  From \cref{corol:convergenceC_n},
  \cref{thm:unit-interval-uniform-Dyck} holds true with 
  a uniform connected unit interval graph $\bm C_n$ instead of $G_{\bm w}$.
 Therefore the random 
 mm-space $(\bm G^0_n,\frac{1}{\sqrt L_n} d_{\bm G^0_n},m_{V_{\bm G^0_n}})$
 converges in the GP topology to $([0,1], \tfrac{1}{\sqrt 2} d_{\mathbb e},\Leb)$.
 The theorem follows because $\dTV(m_{V_{\bm G^0_n}},m_{{V_{\bm G_n}}}) = \frac{n-L_n}n$
 and $d_{\bm G^0_n}$ is the distance $d_{\bm G_n}$ restricted to $\bm G^0_n$.
 \end{proof}

\subsection{Number of copies of $K_k$}
\label{ssec:nb-clique-unit}
In this section, we find the asymptotic behavior 
of the number of copies of the complete graph $K_k$ in 
a uniform random unit interval graph $\bm G_n$.
Unlike the case of permutation and circle graphs,
this does not follow directly from our scaling limit result,
but builds on the same intermediate considerations.

As above, we first consider the unit interval graph $G_{\bm w}$ associated
with a uniform irreducible Dyck path $\bm w$ of length $2n$.
We start with the following deterministic lemma,
where we use the notation $f_w(i)$ from \cref{ssec:preuve-lemme-distance}.

\begin{lemma}
\label{lem:count-cliques-Gw}
Let $w$ be an irreducible Dyck path, and $G_w$ its associated unit interval graph.
The following holds:
\[\#\set{\text{cliques of size $k$ in }G_w }  = \sum_{i=1}^n \binom{f_w(i)}{k-1}.\]
\end{lemma}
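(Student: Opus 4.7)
My plan is to use the key structural property of unit interval graphs: when vertices are indexed by the order of left endpoints of their intervals, the neighborhood of a vertex consists of a contiguous block of indices around it. More precisely, from the correspondence between $w$ and the interval representation $(I_i = [a_i, a_i+1])_{1 \le i \le n}$ with $a_1 < \cdots < a_n$ as explained in \cref{sec:def_unit_interval}, and from the definition of $f_w(i)$, the vertex $v_i$ is adjacent to $v_j$ for $i < j$ if and only if $a_j \le a_i + 1$, equivalently $j - i \le f_w(i)$.

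The crux of the argument is the following claim: a subset $\{v_{i_1}, \ldots, v_{i_k}\}$ with $i_1 < i_2 < \cdots < i_k$ forms a clique in $G_w$ if and only if $v_{i_1} \sim v_{i_k}$, equivalently $i_k \le i_1 + f_w(i_1)$. The ``only if'' direction is trivial. For ``if'', assume $a_{i_k} \le a_{i_1} + 1$. Then for any $s < t$ in $\{1, \dots, k\}$, we have $a_{i_t} \le a_{i_k} \le a_{i_1} + 1 \le a_{i_s} + 1$, so $I_{i_s} \cap I_{i_t} \ne \emptyset$ and hence $v_{i_s} \sim v_{i_t}$.

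Given this claim, counting cliques of size $k$ amounts to: choose the smallest index $i_1 \in [n]$, and then choose the remaining $k-1$ indices $i_2 < \cdots < i_k$ from the set $\{i_1+1, i_1+2, \dots, i_1 + f_w(i_1)\}$, which can be done in $\binom{f_w(i_1)}{k-1}$ ways (with the convention that $\binom{m}{k-1} = 0$ when $m < k-1$, so indices near $n$ cause no problem). Summing over $i_1 = 1, \dots, n$ gives the stated identity. No step is really a serious obstacle here; the main content is recognizing the contiguous-neighborhood property of unit interval graphs, which makes the ``smallest-index'' decomposition of each clique unambiguous.
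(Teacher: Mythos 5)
Your proof is correct and follows essentially the same approach as the paper's: both establish the claim that $\{v_{i_1},\dots,v_{i_k}\}$ (with $i_1<\dots<i_k$) is a clique if and only if $i_k\le i_1+f_w(i_1)$, and then count by grouping cliques according to their smallest index. Your verification of the ``if'' direction via the inequality $a_{i_t}\le a_{i_k}\le a_{i_1}+1\le a_{i_s}+1$ is just a slightly more explicit writing of the paper's observation that unit length forces all these intervals to pairwise intersect.
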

\begin{proof}
Consider a $k$-tuple of vertices $(v_{i_1},v_{i_2},\dots,v_{i_k})$ of $G_w$,
where the vertices of $G_w$ are labeled as in \cref{ssec:limit-uniform-Dyck-path}
and where $i_1<i_2<\dots<i_k$.
Recall that the vertex $v_{i_j}$ corresponds to some interval $[a_{i_j},b_{i_j}]$.

We claim that this $k$-tuple induces a clique in $G_w$
if and only if $i_k \leq i_1+f_w(i_1)$.
This condition is clearly necessary, since this is a necessary condition for $i_k$
to be connected to $i_1$. Conversely, if $i_k \leq i_1+f_w(i_1)$,
then all of $a_{i_2}$, \dots, $a_{i_k}$ belong to $[a_{i_1},b_{i_1}]$.
Since all intervals have unit length, all intervals  $[a_{i_j},b_{i_j}]$ intersect each other,
and the vertices $(v_{i_1},v_{i_2},\dots,v_{i_k})$ indeed induce a clique $K_k$ in $G_w$.

We now count such $k$-tuples, grouping them by the value of $i_1$.
For $i_1$ in $[n]$, there are $\binom{f_w(i_1)}{k-1}$ ways to choose $i_2 < \dots <i_k$
larger than $i_1$ such that $i_k \leq i_1+f_w(i_1)$. The formula in the lemma follows immediately.
\end{proof}

This lemma allows to find the asymptotic behavior of the number of cliques of size $k$
in $G_{\bm w}$, where $\bm w$ is a uniform random irreducible Dyck path of length $2n$.
\begin{lemma}
\label{lem:asympt-cliques-Gw}
For any $K\ge 1$, we have the following joint convergence in distribution:
\[ \left(\frac{\#\set{\text{cliques of size $k$ in }G_{\bm w} }}{n^{\frac{k+1}2}} \right)_{2\leq k \le K}
\to \left( \frac{2^{(k-1)/2}}{(k-1)!}   \int_0^1 \mathbb e(t)^{k-1} dt \right)_{2\leq k \le K},\]
where $\mathbb e(t)$ is a Brownian excursion.
\end{lemma}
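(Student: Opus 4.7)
The plan is to start from the deterministic formula in \cref{lem:count-cliques-Gw}, namely $\#\set{\text{cliques of size $k$ in }G_{\bm w}} = \sum_{i=1}^n \binom{f_{\bm w}(i)}{k-1}$, and reduce the asymptotics of this count to a Riemann sum for the height function $h_{\bm w}$, which converges to $\sqrt 2\,\mathbb e$ by \eqref{eq:conv_hauteur}. The two approximations to justify are (a) replacing $\binom{f_{\bm w}(i)}{k-1}$ by $f_{\bm w}(i)^{k-1}/(k-1)!$ and (b) replacing $f_{\bm w}(i)$ by $h_{\bm w}(i)$.

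For (a), I would use the polynomial identity $\binom{x}{k-1}=\frac{x^{k-1}}{(k-1)!}+O(x^{k-2})$ and argue that the lower-order terms are negligible. It suffices to show that $\frac{1}{n^{(p+2)/2}}\sum_{i=1}^n f_{\bm w}(i)^{p}$ is stochastically bounded for any fixed $p\ge 0$, which follows from the same argument as below applied with exponent $p$. Used with $p=k-2$, this makes the error contribution $O_{\mathbb P}(n^{-1/2})$ after normalization by $n^{(k+1)/2}$.

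For (b), partition $[n]$ into $A=\{i:f_{\bm w}(i)\ge n^{0.4}\}$ and its complement. On $A$, \cref{corol:HSurF} yields $f_{\bm w}(i)=(1+O(n^{-0.1}))\,h_{\bm w}(i)$ uniformly, so $f_{\bm w}(i)^{k-1}=(1+O(n^{-0.1}))\,h_{\bm w}(i)^{k-1}$. On $A^c$, the crude bound $f_{\bm w}(i)^{k-1}\le n^{0.4(k-1)}$ and $|A^c|\le n$ give $\sum_{i\in A^c}f_{\bm w}(i)^{k-1}=O(n^{1+0.4(k-1)})$, which once divided by $n^{(k+1)/2}$ carries the exponent $0.1-0.1k$, strictly negative for $k\ge 2$. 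Moreover, \cref{corol:HSurF} implies $A^c\subseteq\{i:h_{\bm w}(i)<2n^{0.4}\}$ eventually, so the corresponding $h$-sum on $A^c$ is negligible by the same estimate. It remains to identify the limit of
\[\frac{1}{n^{(k+1)/2}}\sum_{i=1}^n h_{\bm w}(i)^{k-1}=\frac{1}{n}\sum_{i=1}^n\left(\frac{h_{\bm w}(i)}{\sqrt n}\right)^{k-1}\]
as a Riemann sum. Via the Skorokhod coupling for the uniform convergence in \eqref{eq:conv_hauteur} and the continuity of $x\mapsto x^{k-1}$ together with the almost-sure boundedness of $\sup_t \mathbb e(t)$, this sum converges almost surely to $\int_0^1(\sqrt 2\,\mathbb e(t))^{k-1}dt=2^{(k-1)/2}X_{k-1}$. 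Dividing by $(k-1)!$ yields the claimed limit.

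Joint convergence for $k\in\{2,\dots,K\}$ is automatic: all approximations rest on the single functional convergence of $h_{\bm w}(\lfloor n\cdot\rfloor)/\sqrt n$ to $\sqrt 2\,\mathbb e$, and on the Skorokhod-coupled space we obtain almost-sure convergence of the whole vector with all $X_{k-1}$ computed from the same excursion, which implies joint convergence in distribution on the original space. The main obstacle is to control the boundary indices in $[n]$ where $h_{\bm w}$ may be arbitrarily small and \cref{corol:HSurF} does not govern the ratio $f_{\bm w}/h_{\bm w}$: the threshold $n^{0.4}$ together with the relation $0.4(k-1)<(k-1)/2$ for $k\ge 2$ ensures that the complementary contribution is negligible uniformly in~$k$.
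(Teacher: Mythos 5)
Your proof is correct and follows essentially the same path as the paper's: start from the deterministic identity of Lemma~\ref{lem:count-cliques-Gw}, expand the binomial coefficient, replace $f_{\bm w}$ by $h_{\bm w}$ using Corollary~\ref{corol:HSurF} (with the low-$h$/low-$f$ indices discarded via the $n^{0.4}$ threshold), and pass to the limit through the functional convergence \eqref{eq:conv_hauteur} of $h_{\bm w}(\lfloor n\cdot\rfloor)/\sqrt n$ to $\sqrt 2\,\mathbb e$, the joint convergence being automatic since everything is a continuous functional of that single convergence. The only cosmetic differences are that you bound the error by showing $\frac{1}{n^{(p+2)/2}}\sum_i f_{\bm w}(i)^p=O_{\mathbb P}(1)$ whereas the paper uses $\sup_i f_{\bm w}(i)/\sqrt n=O_{\mathbb P}(1)$, and that you make the negligibility of the contribution from $\{f_{\bm w}(i)<n^{0.4}\}$ fully explicit (exponent $0.1-0.1k<0$) where the paper states it more tersely.
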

\begin{proof}
For $k \le K$, we let $N_k(G_{\bm w})$ be the number of cliques of size $k$ in $G_{\bm w}$. 
From \cref{lem:count-cliques-Gw}, we have
\begin{multline*}
n^{-\frac{k+1}{2}} N_k(G_{\bm w}) = n^{-\frac{k+1}{2}} \sum_{i=1}^n \binom{f_{\bm w}(i)}{k-1} =n^{-\frac{k+1}{2}} \sum_{i=1}^n \left( \frac{f_{\bm w}(i)^{k-1}}{(k-1)!} + O\big(f_{\bm w}(i)^{k-2}\big) \right)\\
=\frac{1}{n}\left(\sum_{i=1}^n \frac{1}{(k-1)!} \left(\frac{f_{\bm w}(i)}{\sqrt n}\right)^{k-1} \right) + \frac{1}{\sqrt n}\ O\left(\sup_{i} \left(\frac{f_{\bm w}(i)}{\sqrt n}\right)^{k-2} \right).
\end{multline*}
Using the convergence of $\frac{h_{\bm w}}{\sqrt n}$ to a Brownian excursion and \cref{corol:HSurF}, we see that $\frac{f_{\bm w}(i)}{\sqrt n}$ is bouded almost surely. Consequently, for $k \ge 2$,
\[n^{-\frac{k+1}{2}} N_k(G_{\bm w}) = \frac{1}{n}\left(\sum_{i=1}^n \frac{1}{(k-1)!} \left(\frac{f_{\bm w}(i)}{\sqrt n}\right)^{k-1} \right) +o_P(1),\]
where, as usual, the notation $o_P(1)$ represents a random variable
converging to $0$ in probability.
Using \cref{corol:HSurF} and observing that terms with $f_{\bm w}(i)<n^{0.4}$ and $h_{\bm w}(i)<n^{0.4}$
have a negligible contribution, we get that with a probability tending to one
\begin{multline*}
n^{-\frac{k+1}{2}} N_k(G_{\bm w}) = \frac{1}{n}\sum_{i=1}^n \frac{1}{(k-1)!} \left(\frac{h_{\bm w}(i)}{\sqrt n}\right)^{k-1} +o_P(1) \\
= \frac{1}{(k-1)!} \int_0^1 \left(\frac{h_{\bm w}(\lfloor nt \rfloor)}{\sqrt n}\right)^{k-1} dt +o_P(1).
\end{multline*}
Recall from \cref{eq:conv_hauteur} that 
 the random function $\frac{h_{\bm w}(\lfloor nt \rfloor)}{\sqrt n}$ converges uniformly
in distribution to $\sqrt 2 \,\mathbb e(t)$, in the space of continuous functions on $[0,1]$ equipped with uniform convergence.
 Since  $f\mapsto \left(1,\int f,\int f^2,\dots, \int f^{K-1}  \right)$ is continuous on that space this implies 
that
\[n^{-\frac{k+1}{2}} N_k(G_{\bm w}) \rightarrow \frac{2^{(k-1)/2}}{(k-1)!} \int_0^1 \mathbb e(t)^{k-1} dt,\]
jointly for $2\leq k\leq K$. The lemma is proved.
\end{proof}
From \cref{lem:dTV-Gw-Cn}, a version of \cref{lem:asympt-cliques-Gw} where $G_{\bm w}$
is replaced by a uniform random connected unit interval graph $\bm C_n$
also holds. Finally, mimicking the proof of \cref{thm:unit-interval},
the result also holds for a uniform random (nonnecessarily connected) unit interval
graph $\bm G_n$, concluding the proof of \cref{thm:number-of-cliques}.

%

%%%%%%%%%%%%%%%%%%%%%%%%%%

%%%%%%%%%%%%%%%%%%%%%%%%%%%
\appendix
\section{Proofs of two technical results}\label{Sec:Appendice}

{\small % En petit pour tout l'appendice

\subsection{Proof of \cref{prop:prime=indec}: indecomposable matchings and split-prime circle graphs}\label{Sec:AppendiceMatching}

Let us recall the statement of the Proposition.

\medskip

\noindent{\bf Proposition \ref{prop:prime=indec}.}
\emph{
Let $G$ be a circle graph and $\match$ be a matching that represents $G$.
Then $G$ is split-prime if and only if $\match$ is indecomposable.
}

\begin{proof}
Let $n$ denote the number of vertices of $G$. If $n\leq 3$ then $G$ is trivially split-prime and $\match$ cannot be decomposable so there is nothing to prove. Assume $n\geq 4$.
\smallskip

\noindent{\bf Proof of $\match$ decomposable $\Rightarrow$ $G$ has a nontrivial split.}
Let $C_1, C_2, C_3, C_4$ be the partition of $[2n]$ associated to the $k$-decomposition of $\match$. 
Let $V_{odd}$ (resp. $V_{even}$) be the set of vertices of $G$ corresponding to chords of $C_1 \cup C_3$ (resp. $C_2\cup C_4$). The sets $V_{odd}$ and $V_{even}$ both contain at least two vertices and form a nontrivial split, whose cut vertex set $V^{cut}_{odd}$ (resp.~$V^{cut}_{even}$)
consists of chords between $C_1$ and $C_3$ (resp.~of chords between $C_2$ and $C_4$). See \cref{fig:k-dec-matching}~(right).
\medskip

\noindent{\bf Proof of $G$ has a nontrivial split $\Rightarrow$ $\match$ decomposable. }
We distinguish three cases.
\smallskip

{\em Case 1: $\match$ has a chord $\{a,a{+}1\}$ for some $a$} (where $a{+}1$ is interpreted mod $2n$,
as well as $a{+}2$ below).
Let $b$ be such that $\{a{+}2,b\} \in \match$.
Then $\match$ admits a decomposition as in \cref{def:decomposable_matching},
where one of the sets $C_i$ is $\{a,a{+}1,a{+}2\}$ and another is $\{b\}$.
Hence the matching $\match$ is indeed decomposable.
\smallskip

{\em Case 2: $G$ is disconnected but $\match$ has no chord of the form $\{a,a{+}1\}$.}
Let $V'$ be the set of vertices of a connected component of $G$.
Each vertex in $V'$ corresponds to a pair $\{a,b\}$ in $\match$,
and we denote by $I$ the union of such pairs. Up to choosing another connected
component $V'$, we may assume that $1 \notin I$. Let $C_2$ be the integer interval
$[\min(I),\max(I)]$ (in particular, all chords of $V'$ have both extremities in $C_2$)
and $C_1$ its complement in $[2n]$.
We claim that there is no chord from $C_1$ to $C_2$ in $\match$. 
Indeed such a chord would necessarily cross
a chord corresponding to a vertex in $V'$ (since such chords form a connected set in the unit disk containing $\min(I)$ and $\max(I)$).
Thus this chord would itself correspond to a vertex in $V'$,
which is impossible since it has an extremity in $C_1$.

We have proved that $\match$ has only chords from $C_1$ to $C_1$,
and from $C_2$ to $C_2$.
Since $\match$ has no chord of the form $\{a,a{+}1\}$,
each of $C_1$ and $C_2$ has size at least 4,
and $(C_1,C_2,\emptyset,\emptyset)$ is a decomposition as in \cref{def:decomposable_matching}. Thus $\match$ is decomposable.
\smallskip

{\em Case 3: $G$ is connected.}
Since $G$ is assumed to be split-decomposable,
its vertex set admits a nontrivial split $\{V_{odd},V_{even}\}$,
with corresponding cut vertex sets
 $V_{odd}^{cut}$ and $V_{even}^{cut}$.
 Vertices in $G$ correspond to chords in the matching $\match$,
 and we let $S_{odd}$ (resp.~$S_{even}$, $S_{odd}^{cut}$ and $S_{even}^{cut}$)
  be the set of points belonging to a chord in 
  $V_{odd}$ (resp.~$V_{even}$, $V_{odd}^{cut}$ and $V_{even}^{cut}$).

\begin{figure}
\[ \includegraphics{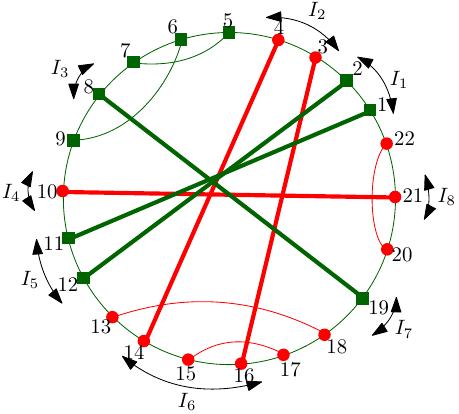} \qquad \includegraphics{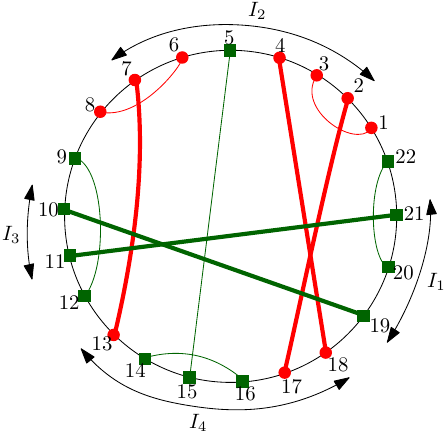} \]
\caption{Two examples of splits where the corresponding sets $S_{even}$
and $S_{odd}$ are not unions of at most two circular intervals.
By convention, green points/chords correspond to elements of $S_{odd}$/$V_{odd}$
and bold chords indicate elements of the cut vertex set $V_{odd}^{cut}$
(and similarly replacing green by red, and odd by even).
\label{fig:bad_splits}}
\end{figure}
If each of $S_{odd}$ and $S_{even}$ were a union of at most two circular intervals,
we would immediately have a decomposition as in \cref{def:decomposable_matching}
and conclude that $\match$ is decomposable.
However this is not always the case (as can be seen on the examples of \cref{fig:bad_splits}).
The strategy of proof is thus to 
define a particular type of split, called pure $4$-split,
that satisfies the following:
\begin{itemize}[itemsep=0mm,topsep=0mm,parsep=0pt,partopsep=0pt]
\item 
first, there exists a pure $4$-split as soon as $G$ is split-decomposable;
\item second, for a pure $4$-split, $S_{odd}$ and $S_{even}$
are unions of at most two circular intervals. 
\end{itemize}
The decomposability of $\match$ will follow immediately.

Since $G$ is connected, $V_{odd}^{cut}$ and $V_{even}^{cut}$ are nonempty.
By definition of cut-set, any chord in $V_{odd}^{cut}$ crosses any chord in $V_{even}^{cut}$, hence has the same amount of elements of $S_{even}^{cut}$ on each side.
As a consequence, one can show that there exist a positive integer $d$ and $4d$ nonempty sets $S^{cut}_1$, \dots, $S^{cut}_{4d}$
that appear in this order counter-clockwise around the circle and
such that:
\begin{itemize}[itemsep=0mm,topsep=0mm,parsep=0pt,partopsep=0pt]
\item $S_{odd}^{cut} = \bigcup_{j \text{ odd}} S^{cut}_j$ and 
 $S_{even}^{cut} = \bigcup_{j \text{ even}} S^{cut}_j$;
\item and any chord in the cut vertex sets $V_{even}^{cut}$ (resp.~$V_{odd}^{cut}$) goes from $S^{cut}_j$ to $S^{cut}_{j+2d}$ for some even $j \le 2d$ (resp.~odd $j \le 2d$).
\end{itemize}
We then say that the split $\{V_{odd},V_{even}\}$ is a $4d$-split.
For $j \in [4d]$, we also let $I_j$ be the smallest circular interval containing $S^{cut}_j$
(not containing  $S^{cut}_{j'}$ for $j' \ne j$).
The definition of the intervals $I_j$ is illustrated on the two examples of \cref{fig:bad_splits};
the example on the left is an $8$-split, while that on the right is a $4$-split (but not a pure $4$-split).\smallskip

{\em Subcase 3a: $d>1$.} When $d>1$, we claim that $I_j$ contains only points
of $S_{odd}$ (resp.~$S_{even}$) whenever $j$ is odd (resp.~even).
 Let us prove the claim by contradiction and assume, w.l.o.g.,
 that $I_1$ contains a point in $S_{even}$.
 Let $c$ be the chord containing this point; by construction $c$ is in
  $V_{even} \setminus V_{even}^{cut}$.
  Hence $c$ cannot cross chords of $V_{odd}^{cut}$ forcing both extremities of $c$ to be in $I_1$ (see \cref{fig:no_problemo}, left).
  The set of chords in  $V_{even}$ with extremities in $I_1$ then form a connected
  component (or several) of the graph $G$, contradicting the connectedness of $G$.
  This proves the claim that $I_j$ contains only points
of $S_{odd}$ (resp.~$S_{even}$) whenever $j$ is odd (resp.~even).
  \begin{figure}
  \[ \includegraphics{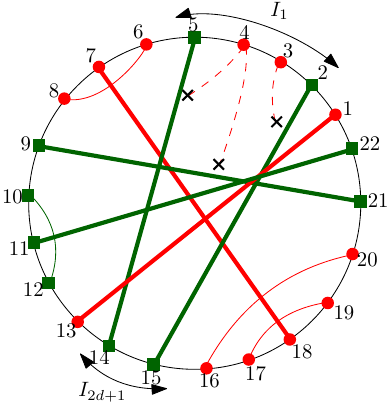} \qquad \includegraphics{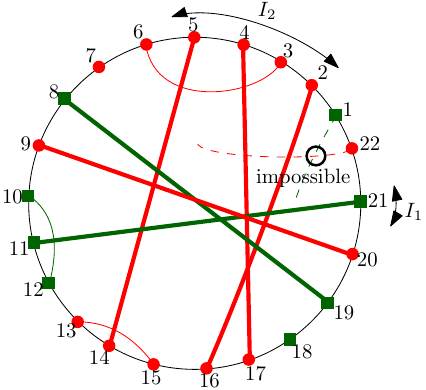} \]
  \caption{Illustration of the proof of \cref{prop:prime=indec}, Subcase 3a.
  Left: the interval $I_1$ contains only green points. Right: there is no alternation
  of colors between $I_1$ and $I_2$. \label{fig:no_problemo}
  }
  \end{figure}

   Furthermore, let us call $\vec A$ the oriented circular arc going from $I_1$ to $I_2$.
   By construction, points in $\vec A$ are either in  $S_{odd} \setminus S_{odd}^{cut}$
   or in $S_{even} \setminus S_{even}^{cut}$.
 We claim that in $\vec A$,
 we first see points of $S_{odd}$ and then points of $S_{even}$.
 Assume it is not the case, and that there exists a point $x$ of $S_{even}$
 preceding a point $y$ of $S_{odd}$ in $\vec A$.
 Since $G$ is connected, $x$ must be connected by a series of chords  to $I_2$, these chords belonging by construction to $V_{even}
 \setminus V_{even}^{cut}$,
 while $y$ must be connected by a series of chords to $I_1$, these chords belonging by construction to $V_{odd}\setminus V_{odd}^{cut}$.
 This forces a chord of $V_{even} \setminus V_{even}^{cut}$
  to cross a chord of $V_{odd}\setminus V_{odd}^{cut}$,
 which is impossible -- see \cref{fig:no_problemo}, right.
  In general, on the arc going from $I_j$ to $I_{j+1}$, 
  we first see points of $S_{even}$
  and then points of $S_{odd}$ if $j$ is even, and
  conversely if $j$ is odd.

  This implies that the circle can be cut in $4d$ circular intervals $S_1$,\dots, $S_{4d}$
  such that $S_{odd} = \bigcup_{j \text{ odd}} S_j$ and 
and $S_{even} = \bigcup_{j \text{ even}} S_j$.
Moreover, edges of the cut set go from $S_j$ to $S_{j+2d}$ for some $j$,
while edges not in the cut set go from some $S_j$ to itself.
We then set 
\[ \begin{array}{rclcrcl}
C_1 &\!\!\!=\!\!\!& S_1 \cup \dots \cup S_d, &\quad &C_2 &\!\!\!=\!\!\!& S_{d+1} \cup \dots \cup S_{2d},\\
 C_3 &\!\!\!=\!\!\!& S_{2d+1} \cup \dots \cup S_{3d},&\quad& C_4 &\!\!\!=\!\!\!& S_{3d+1} \cup \dots \cup S_{4d}.
 \end{array}\]
Up to renaming cyclically $(C_1,C_2,C_3,C_4)$ so that $1 \in C_1$,
the $C_i$'s form a partition of the circle as in \cref{def:decomposable_matching},
proving that $\match$ is decomposable.
  \smallskip
  
  {\em Subcase 3b: $d=1$.}
  When $d=1$, unlike in the previous case, 
  it might happen that there is a chord
   $c$ in $V_{odd} \setminus V_{odd}^{cut}$ 
   having one endpoint in $I_2$ and one in $I_4$
   (see \cref{fig:bad_and_good_splits}, left),
   or symmetrically a chord
   $c'$ in $V_{even} \setminus V_{even}^{cut}$ 
  going from $I_1$ and $I_3$.
  We say that the split $\{V_{odd},V_{even}\}$ is even-pure (resp.~odd-pure)
if there is no chord $c$ in $V_{odd} \setminus V_{odd}^{cut}$ 
having one endpoint in $I_2$ and one in $I_4$
(resp.~no chord $c'$ in $V_{even} \setminus V_{even}^{cut}$ 
having one endpoint in $I_1$ and one in $I_3$).
A split is {\em pure} if it is simultaneously odd-pure and even-pure.

  When the split $\{V_{odd},V_{even}\}$ is pure,
  the same argument as in Subcase 3a shows that $S_{odd}$ and $S_{even}$
  decompose as $S_1 \cup S_3$ and $S_2 \cup S_4$ respectively,
  where $S_1$, $S_2$, $S_3$, $S_4$ are circular intervals appearing in this order 
  counter-clockwise along the circle.
  Thus $\match$ is decomposable.
  
  \begin{figure}
\[\begin{array}{c}
\includegraphics{ImpureSplit}
\end{array} \quad
\begin{array}{c}
\includegraphics{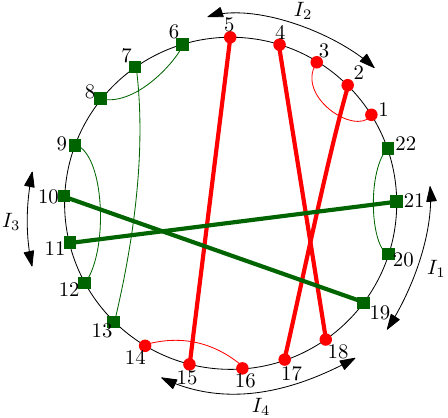}
\end{array} \]
	\captionsetup{width=\linewidth}
\caption{Left:
A split that is not even-pure, as there is a green edge from $I_2$ to $I_4$.
Modifying the split as indicated in the proof (here $c=\{5,15\}, C=\{c\} \cup \{\{14,16)\}\}$, and $V^{even}_3$ consists of the chords $\{6,8\}$ and $\{7,13\}$) yields the bicoloration of the same matching  shown on the right picture. The corresponding split is both even-pure and odd-pure.}
\label{fig:bad_and_good_splits}
\end{figure}
  We will show that one can always transform (possibly in several steps)
  an impure $4$-split into a pure one.
The notation and this part of the proof are illustrated in \cref{fig:bad_and_good_splits}.
First observe that any $4$-split is either even-pure or odd-pure
(otherwise there would be crossing chords $c$ and $c'$ in $V_{odd} \setminus V_{odd}^{cut}$ and $V_{even} \setminus V_{even}^{cut}$ respectively, which is impossible).
Without loss of generality,
we can assume that our initial split $\{V_{odd},V_{even}\}$ is odd-pure
but not even-pure.
Let $c$ be a chord in $V_{odd} \setminus V_{odd}^{cut}$ 
having one endpoint in $I_2$ and one in $I_4$.
Since $c$ is not in $V_{odd}^{cut}$, it does not cross chords in $V_{even}$. 
Thus this partitions $V_{even}$ into two mutually noncrossing nonempty sets $V_{even}^1$ and 
$V_{even}^3$, the chords of $V_{even}^1$ (resp.~$V_{even}^3$) 
being on the same side of $c$ as $I_1$ (resp.~$I_3$).
Let $C$ be the connected component of $c$ in the induced subgraph of $G$
on vertex set $V_{odd} \setminus V_{odd}^{cut}$.
Then it is easy to check that $V'_{even}=V_{even}^1 \cup C$ and 
$V'_{odd}=(V_{odd} \setminus C)  \cup  V_{even}^3$
forms a nontrivial split of $G$, which is still odd-pure. 
Moreover, the intervals $I'_2$ and $I'_4$ associated with this new split
are strictly smaller than $I_2$ and $I_4$. 
Therefore, we can iterate this transformation until finding a pure nontrivial split.
Since we have already shown that $\match$ is decomposable
whenever $G$ contains a pure nontrivial split, this ends the proof of the proposition.
\end{proof}

\subsection{Proof of  \cref{lem:XYZ-poisson}:  distribution of atypical chords}\label{Sec:AppendicePoisson}
Before recalling the statement of the lemma, we recall some notation.
For a matching $\match$, $x(\match) = \sum_{i=1}^{2n} \One[\match(i)\equiv i+1]$ is the number of chords between adjacent points,
where $\equiv$ stands for equality mod $2n$. Similarly $y(\match)=\sum_{j=1}^{2n} \One[\match(j)\equiv j+2]$. Finally 
$$
z(\match)=\sum_{\substack{1\leq k<\ell\leq 2n \\ \ell-k\not\equiv \pm 1}} \One\big[\set{\match(k),\match(k+1)}\equiv\set{\ell,\ell+1}\big];
$$
\emph{i.e.}~$z(\match)$ counts pairs of consecutive points matched to another pair of consecutive points.
The definitions of $x(\match)$, $y(\match)$ and $z(\match)$ are illustrated on \cref{Fig:XnYnZn}.
Finally, recall that $M_n$ denotes a uniform random matching of size $n$, and $X_n=x(M_n)$, $Y_n=y(M_n)$ and $Z_n=z(M_n)$. 
\medskip

\noindent{\bf Lemma \ref{lem:XYZ-poisson}.}
\emph{
The triple $(X_n,Y_n,Z_n)$ converges in distribution towards a triple of independent Poisson random variables with mean 1. 
}

\begin{proof}
It is enough to prove that the joint factorial moments of $X_n$, $Y_n$ and $Z_n$ 
tend to $1$
(see pp.~60-62 in \cite{Remco} for a review on Poisson convergence
 and joint factorial moments; in particular \cite[Theorem 2.6]{Remco} states
 that the convergence of joint factorial moments implies the joint convergence in distribution).
We write $(x)_r=x(x-1)\cdots (x-r+1)$ for factorial powers.
For integers $r,s,t \ge 1$, the joint factorial moments expand as
\begin{align}\label{eq:joint_factorial_moments}
 \mathbb E\big[ (X_n)_r (Y_n)_s (Z_n)_t  \big]
   &= \sum_{i_1 \dots i_r,j_1 \dots j_s
   \atop k_1<\ell_1,\dots,k_t<\ell_t}  P_{\mathbf{i},\mathbf{j},\mathbf{k}}, \\
\text{where }P_{\mathbf{i},\mathbf{j},\mathbf{k}}&= \mathbb P\left(
   \begin{cases}
   M_n(i_\alpha)\equiv i_\alpha+1 &\forall \alpha \le r \\
     M_n(j_\beta)\equiv j_\beta+2 &\forall \beta \le s \\
     \set{M_n(k_\gamma),M_n(k_\gamma+1)}\equiv\set{\ell_\gamma,\ell_\gamma+1}&\forall \gamma \le t 
   \end{cases}\right) \notag
\end{align}
   and the sum is taken over lists $\mathbf{i}=(i_1,\dots,i_r)$, $\mathbf{j}=(j_1,\dots,j_s)$ and $\mathbf{k}=((k_1,\ell_1),\dots,
   (k_t,\ell_t))$ such that
   \begin{itemize}
   \item all $i_\alpha$'s are  distinct;
   \item all $j_\beta$'s are  distinct;
   \item all pairs $(k_\gamma,\ell_\gamma)$ are  distinct and furthermore  $\ell_\gamma-k_\gamma \not\equiv \pm 1$, for every $\gamma \le t$.
   \end{itemize}
        
   In the above sum, let us consider first the summands for which all indices   $i_\alpha$, $ i_\alpha+1$, $j_\beta$,
   $j_\beta+2$, $k_\gamma$, $k_\gamma+1$, $\ell_\gamma$ and $\ell_\gamma+1$ are distinct. %, \emph{i.e.}
   We call such terms $P_{\mathbf{i},\mathbf{j},\mathbf{k}}$ {\em nice}, while other terms are referred to as {\em painful}.
   For each nice term, 
   \[
    P_{\mathbf{i},\mathbf{j},\mathbf{k}}=2^t\frac{m_{n-r-s-2t}}{m_n}.%=2^t \frac{(2n-2r-2s-4t)!!}{(2n)!!}.
    \]
   Indeed, for each $\gamma$ we may choose whether $M_n(k_\gamma)=\ell_\gamma+1$ and 
   $M_n(k_\gamma+1)=\ell_\gamma$ or the converse, explaining the factor $2^t$.
   Additionally, once these choices are made, 
   the chords involving indices of ${\mathbf{i},\mathbf{j},\mathbf{k}}$ are fixed, and
   the remaining chords induce a uniform matching of size $n-r-s-2t$.
   Using that $m_{n-1}/m_n = 1/(2n-1) \sim 1/(2n)$, we have that for fixed $r,s,t$ and
   for each nice term $P_{\mathbf{i},\mathbf{j},\mathbf{k}}$ the following holds:
   \begin{equation}\label{eq:asymp_Pijk}
    P_{\mathbf{i},\mathbf{j},\mathbf{k}}\stackrel{n\to +\infty}{\sim} 2^t\, (2n)^{-r-s-2t}.
    \end{equation}

 We now want to estimate the number $\mathcal{N}_n(r,s,t)$ of nice terms.
 Let us remark that if we take $i_1,\dots,i_r$ and $j_1,\dots,j_s$ uniformly in $[2n]$,
 and $(k_1,\ell_1)$, \dots, $(k_t,\ell_t)$ uniformly in $[2n]^2$ conditioned to satisfying $k_\gamma <\ell_\gamma$, all independent to 
 each other, then $\mathbf{i},\mathbf{j},\mathbf{k}$ is the index of a nice term
 with probability tending to $1$ as $n$ tends to $+\infty$. Indeed, a fixed number (here $r+s+2t$) of uniform integers
 in $[2n]$ contains neither repetitions, nor adjacent points with probability tending to $1$.
 Hence, as $n$ tends to $+\infty$, we have
 \[
\mathcal{N}_n(r,s,t) \sim (2n)^r (2n)^s \binom{2n}{2}^{t} \sim 2^{-t} \, (2n)^{r+s+2t}.
 \]
  We conclude that the total contribution of nice terms to the sum in
   \cref{eq:joint_factorial_moments} tends to $1$.\medskip

We now prove that the total contribution of painful terms is asymptotically negligible.
With a triple of lists $(\mathbf{i},\mathbf{j},\mathbf{k})$ as above, we associate a graph
$G_{\mathbf{i},\mathbf{j},\mathbf{k}}$ encoding {\em coincidences} as follows.
\begin{itemize}
\item Its vertex set is 
\[ \{a_\alpha, \alpha \le r\} \cup \{b_\beta, \beta \le s\} \cup \{c_\gamma, \gamma \le t\}
\cup \{d_\gamma, \gamma \le t\}.\]
Each $a_\alpha$ (resp.~$b_\beta$, $c_\gamma$, $d_\gamma$) is a
formal symbol representing the set $\{i_\alpha,i_\alpha+1\}$ (resp.~the set $\{j_\beta,j_\beta+2\}$,
$\{k_\gamma,k_\gamma+1\}$, $\{\ell_\gamma,\ell_\gamma+1\}$).
\item There is an edge between two vertices when the corresponding sets have 
a nonempty intersection.
\end{itemize}
Nice terms are those for which $G_{\mathbf{i},\mathbf{j},\mathbf{k}}$ is the empty graph.
For a nonempty graph $G$, let us denote by $\mathcal N_n^G$ the number of triples
$(\mathbf{i},\mathbf{j},\mathbf{k})$ with $G_{\mathbf{i},\mathbf{j},\mathbf{k}} = G$.
We have $$\mathcal N_n^G=\mathcal O\big(n^{\cc(G)}\big),$$
 where $\cc(G)$ is the number of connected
components of $G$. Indeed, one can choose freely the value
of $i_\alpha$ (or $j_\beta$, $k_\gamma$, $\ell_\gamma$) for one vertex in each 
connected component of $G$. Then there are only finitely many choices for the value 
of $i_\alpha$ (or $j_\beta$, $k_\gamma$, $\ell_\gamma$) for other vertices in the same component.

  We now discuss the value of $P_{\mathbf{i},\mathbf{j},\mathbf{k}}$.
  In some cases, \emph{e.g.}~if $\{i_\alpha,i_\alpha+1\} \cap \{j_\beta,j_\beta+2\}\ne \emptyset$ 
  for some $\alpha,\beta$, the conditions in the definition of $P_{\mathbf{i},\mathbf{j},\mathbf{k}}$ are incompatible and
  $P_{\mathbf{i},\mathbf{j},\mathbf{k}}=0$.
  Otherwise the conditions define a configuration of chords that the random matching $M_n$ should contain (or more precisely
  $M_n$ should contain a configuration
  among a finite number of possible ones, as for each $\gamma$, one can choose
  whether $k_\gamma$ is connected to $\ell_\gamma$ and 
  $k_\gamma+1$ to $\ell_\gamma+1$ or conversely).
  With a similar reasoning as in \cref{eq:asymp_Pijk}, we have 
  $$P_{\mathbf{i},\mathbf{j},\mathbf{k}}=\mathcal O\big(n^{-\chords(\mathbf{i},\mathbf{j},\mathbf{k})}\big),$$
  where $\chords(\mathbf{i},\mathbf{j},\mathbf{k})$ is the number of chords in this configuration.
  
  We claim that for any triple $(\mathbf{i},\mathbf{j},\mathbf{k})$
  such that $G_{\mathbf{i},\mathbf{j},\mathbf{k}}$ is nonempty
  and $P_{\mathbf{i},\mathbf{j},\mathbf{k}} \neq 0$, we have
  \begin{equation}\label{eq:chords_cc}
  \chords(\mathbf{i},\mathbf{j},\mathbf{k}) > \cc(G_{\mathbf{i},\mathbf{j},\mathbf{k}}).
  \end{equation}
Assuming temporarily the claim, for any nonempty graph $G$,
 the total contribution of triples $(\mathbf{i},\mathbf{j},\mathbf{k})$ with $G_{\mathbf{i},\mathbf{j},\mathbf{k}}=G$ to \cref{eq:joint_factorial_moments} is negligible.
 Hence the total contribution of painful terms is negligible
 and $\mathbb E\big[ (X_n)_r (Y_n)_s (Z_n)_t  \big]$ tends to $1$ as desired.
 
 It only remains to show \cref{eq:chords_cc}. Configurations $(\mathbf{i},\mathbf{j},\mathbf{k})$ with a nonempty graph $G_{\mathbf{i},\mathbf{j},\mathbf{k}}$ can be decomposed
 into basic types of coincidences (and chords corresponding to isolated vertices in $G_{\mathbf{i},\mathbf{j},\mathbf{k}}$), which are represented on \cref{Fig:Coincidence}
 and on which \eqref{eq:chords_cc} is easy to check.
 This concludes the proof of the lemma.
   \begin{figure}[t]
\begin{center}
\includegraphics[width=\textwidth]{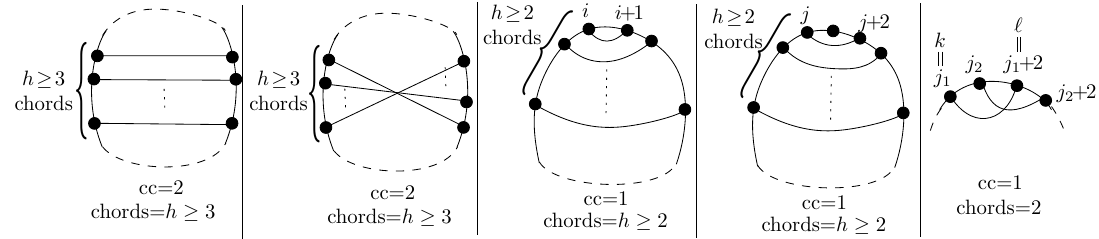}
\end{center}
\caption{Basic configurations with coincidences (every pair of adjacent parallel or crossing lines correspond to some $k_\gamma$ and $\ell_\gamma$). 
All satisfy $\cc<\chords$.
}
\label{Fig:Coincidence}
\end{figure}   
\end{proof}

} % Ferme le \small de l'appendice

\end{document}